\numberwithin{equation}{section}
\newcommand{\ds}{\displaystyle}
\newcommand{\bbeta}{{\boldsymbol\eta}}
\newcommand{\bdelta}{{\boldsymbol\delta}}
\newcommand{\bsi}{{\boldsymbol\sigma}}
\newcommand{\bPi}{{\mbox{\boldmath $\Pi$}}}
\newcommand{\btau}{{\boldsymbol\tau}}
\newcommand{\bzeta}{{\boldsymbol\zeta}}
\newcommand{\bv}{{\mathbf{v}}}
\newcommand{\bw}{{\mathbf{w}}}
\newcommand{\f}{\mathbf{f}}
\newcommand{\bc}{\mathbf{c}}
\newcommand{\bu}{\mathbf{u}}
\newcommand{\bt}{{\mathbf{t}}}
\newcommand{\bn}{{\mathbf{n}}}
\newcommand{\be}{{\mathbf{e}}}
\newcommand{\0}{{\mathbf{0}}}
\def\bE{\mathbf{E}}
\def\bG{\mathbf{G}}
\def\bI{\mathbf{I}}
\def\bW{\mathbf{W}}
\def\bM{\mathbf{M}}
\def\bP{\mathbf{P}}
\def\bRT{\mathbf{RT}}
\def\bx{\mathbf{x}}
\def\bGu{\mathbf{Gu}}
\newcommand{\bL}{\mathbf{L}}
\newcommand\bH{\mathbf{H}}
\newcommand\bbM{\mathbb{M}}
\newcommand\bbH{\mathbb{H}}
\newcommand\bbX{\mathbb{X}}
\newcommand\bbW{\mathbb{W}}
\newcommand\bbL{\mathbb{L}}
\newcommand{\bbRT}{\mathbb{RT}}
\newcommand{\cA}{\mathcal{A}}
\newcommand{\cB}{\mathcal{B}}
\newcommand{\cC}{\mathcal{C}}
\newcommand{\cE}{\mathcal{E}}
\newcommand{\cF}{\mathcal{F}}
\newcommand{\cG}{\mathcal{G}}
\newcommand{\cJ}{\mathcal{J}}
\newcommand{\cN}{\mathcal{N}}
\newcommand{\cM}{\mathcal{M}}
\newcommand{\cT}{\mathcal{T}}
\newcommand{\cD}{\mathcal{D}}
\newcommand{\cO}{\mathcal{O}}
\newcommand{\cR}{\mathcal{R}}
\def\R{\mathrm{R}}
\def\H{\mathrm{H}}
\def\L{\mathrm{L}}
\def\M{\mathrm{M}}
\def\V{\mathrm{V}}
\def\W{\mathrm{W}}
\def\rc{\mathrm{c}}
\def\rd{\mathrm{d}}
\def\rP{\mathrm{P}}
\def\rp{\mathrm{p}}
\def\rq{\mathrm{q}}
\def\rs{\mathrm{s}}
\def\rt{\mathrm{t}}
\def\dc{\mathrm{dc}}
\def\tF{\mathtt{F}}
\def\error{\mathtt{error}}
\def\rate{\mathtt{rate}}
\def\iter{\mathtt{iter}}
\def\tol{\textsf{tol}}
\def\DOF{\mathtt{DOF}}
\def\esssup{\mathrm{ess\,sup}}
\def\bdiv{\mathbf{div}}
\def\tr{\mathrm{tr}\,}
\def\div{\mathrm{div}\,}
\def\coeff{\mathbf{coeff}}
\def\qin{{\quad\hbox{in}\quad}}
\def\qon{{\quad\hbox{on}\quad}}
\def\qan{{\quad\hbox{and}\quad}}
\def\ov{\overline}
\def\wt{\widetilde}
\def\wh{\widehat}
\newtheorem{thm}{Theorem}[section]
\newtheorem{rem}{Remark}[section]
\newtheorem{lem}[thm]{Lemma}
\newtheorem{cor}[thm]{Corollary}
\newenvironment{proof}{\noindent{\it Proof.}}{\hfill$\square$}
\numberwithin{equation}{section}
\numberwithin{figure}{section}
\numberwithin{table}{section}
\title{A Banach space mixed formulation for the unsteady Brinkman--Forchheimer equations}
\author{{\sc Sergio Caucao}\thanks{Department of Mathematics, University of Pittsburgh, Pittsburgh, PA 15260, USA, email: {\tt sac304@pitt.edu}. Supported in part by BECAS CHILE para postdoctorado en el extranjero (convocatoria 2018) and the Department of Mathematics, University of Pittsburgh.}
\quad
{\sc Ivan Yotov}\thanks{Department of Mathematics, University of Pittsburgh, Pittsburgh, PA 15260, USA, email: {\tt yotov@math.pitt.edu}. Supported in part by NSF grant DMS 1818775.}}
\date{\today}
\begin{document}

\maketitle

\begin{abstract}
\noindent We propose and analyze a mixed formulation for the
Brinkman--Forchheimer equations for unsteady flows.  Our approach is
based on the introduction of a pseudostress tensor related to the
velocity gradient, leading to a mixed formulation where the
pseudostress tensor and the velocity are the main unknowns of the
system.  We establish existence and uniqueness of a solution to the
weak formulation in a Banach space setting, employing classical
results on nonlinear monotone operators and a regularization
technique. We then present well-posedness and error analysis for
semidiscrete continuous-in-time and fully discrete finite element
approximations on simplicial grids with spatial discretization based
on the Raviart--Thomas spaces of degree $k$ for the pseudostress
tensor and discontinuous piecewise polynomial elements of degree $k$
for the velocity and backward Euler time discretization.  We provide
several numerical results to confirm the theoretical rates of
convergence and illustrate the performance and flexibility of the
method for a range of model parameters.
\end{abstract}

%

\maketitle


\section{Introduction}

The flow of fluids through porous media at higher Reynolds numbers has
a wide range of applications, including processes arising in chemical,
petroleum, and environmental engineering. Subsurface applications
include groundwater remediation and oil and gas extraction, where fast
flow may occur in fractured or vuggy aquifers or reservoirs, as well as
near injection and production wells. Darcy's law, which is widely used
to model flow in porous media, becomes unreliable for Reynolds numbers
greater than one. The Forchheimer model
\cite{forchheimer1901wasserbewegung} accounts for faster flows by
including a nonlinear inertial term. It can be obtained mathematically
by averaging the Navier-Stokes equations \cite{rm1992}. There have
been a number of numerical studies for the Forchheimer model, see,
e.g., \cite{GirWhe,pr2012,Rui-Pan-CCFD,Kim-Park,Park-2005}. Another extension to
Darcy's law is the Brinkman model \cite{brinkman1949calculation},
which describes Stokes flow through array of obstacles and can be
applied for flows through highly porous media. An advantage of the
Brinkman model is that it has two parameters related to the fluid
viscosity and the medium permeability, respectively, and varying them
allows for modeling flows ranging from the Stokes to the Darcy
regime. Because of this, the Brinkman equations have been often used
to model coupled Stokes and Darcy flows \cite{Mor-Show,Angot2010,Xie-Xu-Xue,LDQ}.

The Brinkman--Forchheimer model, see, e.g.,
\cite{PayStr,cku2006,dr2014,lsst2015,lst2017}, combines the advantages of the
two models and it has been used for fast flows in highly porous media.
In \cite{PayStr}, the authors prove continuous dependence of solutions of
the Brinkman--Forchheimer equations on the Brinkman and Forchheimer
coefficients in the $\L^2$-norm, which is later extended to the
$\H^1$-norm in \cite{cku2006}. Later on, existence and uniqueness of
weak solutions for a velocity-pressure formulation of the
Brinkman--Forchheimer model by means of a suitable regularization
combined with the Faedo--Galerkin approach, was proposed and analyzed
in \cite{dr2014}.  Moreover, the stability of the weak solution of the
corresponding stationary problem is proved.  More recently, a
perturbed compressible system that approximates the
Brinkman--Forchheimer equations is proposed and analyzed in
\cite{lsst2015}.  The corresponding time discretization of the
perturbed system is obtained by a semi-implicit Euler scheme and
lowest-order Raviart--Thomas element applied for spatial
discretization.  In turn, continuous dependence of the solution on the
Brinkman's and Forchheimer's coefficients as well as the initial data
an external forces is obtained.  Meanwhile, in \cite{lst2017} a
pressure stabilization method for the Brinkman--Forchheimer model is
proposed and analyzed.  The authors propose a time discretization
scheme that can be used with any consistent finite element space
approximation.  Second-order error estimate are also derived.

The goal of the present paper is to develop and analyze a new mixed
formulation of the Brinkman--Forchheimer problem and study its
numerical approximation by a mixed finite element method. Unlike
previous works, we introduce the pseudostress tensor as a new unknown,
which allows us to eliminate the pressure from the system, resulting
in a pseudostress-velocity mixed formulation. There are several
advantages of this new approach, including the direct and accurate
approximation of another unknown of physical interest, the
pseudostress tensor, in the $\bbH(\bdiv)$ space, thus enforcing
conservation of momentum in a physically compatible way. In addition,
our formulation alleviates the difficulty of choosing velocity-pressure
finite element spaces that can handle both the Stokes and Darcy limits
in the Brinkman equation. We can use stable Darcy-type mixed finite
element spaces for the pseudostress-velocity pair, such as the
Raviart--Thomas spaces. The numerical experiments indicate robustness
with respect to the Darcy parameter in both the Stokes and the Darcy
regimes. Furthermore, the pressure and the velocity gradient can be
recovered by a simple post-processing in terms of the pseudostress,
preserving the rates of convergence. As a result, these variables are
more accurately approximated, compared to velocity-pressure
formulations. This is illustrated in one of the numerical examples
with discontinuous spatially varying parameters, where the sharp
velocity gradient across the discontinuities is computed
accurately.

Employing techniques from \cite{s2010,aeny2019,cmo2018,cgo2018-pp}, we
combine the classical monotone operator theory and a suitable
regularization technique in Banach spaces to establish existence and
uniqueness of a solution of the continuous weak formulation. Stability
for the weak solution is obtained by an energy estimate. We then
consider semidiscrete continuous-in-time and fully discrete finite
element approximations. The pseudostress tensor and the velocity are
approximated using the Raviart--Thomas spaces of order $k \ge 0$ and
discontinuous piecewise polynomials of degree $\leq k$, respectively,
and time is discretized employing the backward Euler method.  The
well-posedness analysis of the discretization schemes follows the
framework for the continuous weak formulation, combined with discrete
inf-sup stability in the appropriate Banach spaces. We further perform
error analysis for the semidiscrete and fully discrete schemes,
establishing rates of convergence in space and time of the numerical
solution to the weak solution.

The rest of this work is organized as follows.  The remainder of this
section describes standard notation and functional spaces to be
employed throughout the paper.  In
Section~\ref{sec:continuous-formulation} we introduce the model
problem and derive its mixed variational formulation.  In
Section~\ref{sec:well-posedness-model} we establish the well-posedness
of the weak formulation.  The semidiscrete
continuous-in-time scheme is introduced and analyzed in
Section~\ref{sec:semidiscrete-approximation}.  Error estimates and
rates of convergence are also derived.  
In Section~\ref{sec:fully-discrete-formulation}, the fully discrete approximation is
developed and analyzed employing similar arguments to the semidiscrete formulation.
Finally, in Section~\ref{sec:numerical-results} we report numerical studies of the
accuracy of our mixed finite element method, confirming the
theoretical sub-optimal rates of convergence and suggesting optimal
rates of convergence.  In addition, we present computational
experiments illustrating the behavior of the method for a range of
parameter values, as well as its flexibility to handle spatially
varying parameters.


\subsection*{Preliminaries}

Let $\Omega\subset \R^n$, $n\in \{2,3\}$, denote a domain with Lipschitz boundary $\Gamma$. 
For $\rs\geq 0$ and $\rp\in[1,+\infty]$, we denote by $\L^\rp(\Omega)$ and $\W^{\rs,\rp}(\Omega)$ the usual Lebesgue and Sobolev spaces endowed with the norms $\|\cdot\|_{\L^\rp(\Omega)}$ and $\|\cdot\|_{\rs,\rp;\Omega}$, respectively.
Note that $\W^{0,\rp}(\Omega)=\L^\rp(\Omega)$. 
If $\rp = 2$, we write $\H^{\rs}(\Omega)$ in place of $\W^{\rs,2}(\Omega)$, and denote the corresponding Lebesgue and Sobolev norms by $\|\cdot\|_{0,\Omega}$ and $\|\cdot\|_{\rs,\Omega}$, respectively, and the seminorm by $|\cdot|_{\rs,\Omega}$. 
By $\bM$ and $\bbM$ we will denote the corresponding vectorial and tensorial counterparts of a generic scalar functional space $\M$. 
Moreover, given a separable Banach space $\V$ endowed with the norm $\|\cdot\|_{\V}$, we let $\L^{\rp}(0,T;\V)$ be the space of classes of functions $f : (0,T)\to \V$ that are Bochner measurable and such that $\|f\|_{\L^{\rp}(0,T;\V)} < \infty$, with
\begin{equation*}
\|f\|^{\rp}_{\L^{\rp}(0,T;\V)} \,:=\, \int^T_0 \|f(t)\|^{\rp}_{\V} \,dt,\quad
\|f\|_{\L^\infty(0,T;\V)} \,:=\, \mathop{\esssup}\limits_{t\in [0,T]} \|f(t)\|_{\V}.
\end{equation*}
In turn, for any vector field $\bv:=(v_i)_{i=1,n}$, we set the gradient and divergence operators, as
\begin{equation*}
\nabla\bv := \left(\frac{\partial\,v_i}{\partial\,x_j}\right)\qan
\div\bv := \sum^n_{j=1} \frac{\partial\,v_j}{\partial\,x_j}.
\end{equation*}
In addition, for any tensor fields $\btau = (\tau_{ij})_{i,j=1,n}$ and $\bzeta = (\zeta_{ij})_{i,j=1,n}$, we let $\bdiv\btau$ be the divergence operator $\div$ acting along the rows of $\btau$, and define the transpose, the trace, the tensor inner product, and the deviatoric tensor, respectively, as
\begin{equation*}
\btau^\rt := (\tau_{ji})_{i,j=1,n},\quad
\tr(\btau) := \sum^n_{i=1} \tau_{ii},\quad
\btau:\bzeta := \sum^n_{i,j=1} \tau_{ij}\,\zeta_{ij},\qan
\btau^\rd := \btau - \frac{1}{n}\,\tr(\btau)\,\bI,
\end{equation*}
where $\bI$ is the identity tensor in $\R^{n\times n}$. 
For simplicity, in what follows we denote
\begin{equation*}
(v,w)_\Omega \,:=\, \int_{\Omega} v\,w,\quad
(\bv,\bw)_\Omega \,:=\, \int_{\Omega} \bv\cdot\bw,\quad
(\btau,\bzeta)_\Omega \,:=\, \int_{\Omega} \btau:\bzeta.
\end{equation*}
When no confusion arises, $|\cdot|$ will denote the Euclidean norm in $\R^n$ or $\R^{n\times n}$. 
Additionally, we introduce the Hilbert space
\begin{equation*}
\bbH(\bdiv;\Omega) := \Big\{\btau\in\bbL^2(\Omega) :\quad \bdiv\btau\in\bL^ 2(\Omega)\Big\},
\end{equation*}
equipped with the usual norm $\|\btau\|^2_{\bdiv;\Omega} := \|\btau\|^2_{0,\Omega} + \|\bdiv\btau\|^2_{0,\Omega}$.
In addition, in the sequel we will make use of the well-known Young's inequality, for $a, b\geq 0,\, 1/\rp + 1/\rq = 1$, and $\delta >0$,
\begin{equation}\label{eq:Young-inequality}
a\,b \,\leq\, \frac{\delta^{\rp/2}}{\rp}\,a^\rp + \frac{1}{\rq\,\delta^{\rq/2}}\,b^\rq.
\end{equation}
Finally, we end this section by mentioning that, throughout the rest of the paper, we employ $\0$ to denote a generic null vector (or tensor), and use $C$ and $c$, with or without subscripts, bars, tildes or hats, to denote generic constants independent of the discretization parameters, which may take different values at different places.


\section{The continuous formulation}\label{sec:continuous-formulation}

In this section we introduce the model problem and derive the corresponding weak formulation.


\subsection{The model problem}\label{sec:model-problem}

In this work we are interested in approximating the solution of the unsteady Brinkman--Forchheimer equations (see for instance \cite{cku2006,dr2014,lsst2015,lst2017}).
More precisely, given the body force term $\f$ and a suitable initial data
$\bu_0$, the aforementioned system of equations is given by
\begin{equation}\label{eq:Brinkman-Forchheimer-1}
\begin{array}{c}
\ds \frac{\partial\,\bu}{\partial\,t} - \nu\,\Delta\bu + \alpha\,\bu + \tF\,|\bu|^{\rp-2}\bu + \nabla p \,=\, \f,\quad 
\div \bu \,=\, 0 \qin \Omega\times (0,T], \\ [2ex]
\ds \bu \,=\, \0 \qon \Gamma\times (0,T],\quad
\bu(0) \,=\, \bu_0 \qin \Omega,\quad
(p,1)_\Omega \,=\, 0,
\end{array}
\end{equation}
where the unknowns are the velocity field $\bu$ and the scalar pressure $p$.
In addition, the constant $\nu > 0$ is the Brinkman coefficient, $\alpha > 0$ is the Darcy coefficient, $\tF > 0$ is the Forchheimer coefficient and $\rp\in [3,4]$ is a given number.
Next, in order to derive our weak formulation, we first rewrite \eqref{eq:Brinkman-Forchheimer-1} as an equivalent first-order set of equations. To that end we introduce the pseudostress tensor
\begin{equation*}
\bsi := \nu\,\nabla\bu - p\,\bI
\end{equation*}
as a new unknown. Applying the trace operator to the above equation
and utilizing the incompressibility condition $\div\bu = 0$ in
$\Omega\times (0,T]$, we find that \eqref{eq:Brinkman-Forchheimer-1}
  can be rewritten, equivalently, as the set of equations with
  unknowns $\bsi$ and $\bu$, given by
  \begin{equation}\label{eq:Brinkman-Forchheimer-2}
\begin{array}{c}
\ds \frac{1}{\nu}\,\bsi^\rd \,=\, \nabla\bu,\quad 
\frac{\partial\,\bu}{\partial\,t} - \bdiv\bsi + \alpha\,\bu + \tF\,|\bu|^{\rp-2}\bu \,=\, \f,\quad
p \,=\, -\frac{1}{n}\,\tr(\bsi) \qin \Omega\times (0,T], \\ [2ex]
\ds \bu \,=\, \0 \qon \Gamma\times (0,T],\quad
\bu(0) \,=\, \bu_0 \qin \Omega,\quad
(\tr(\bsi),1)_\Omega \,=\, 0.
\end{array}
\end{equation}
Notice that the third equation in \eqref{eq:Brinkman-Forchheimer-2}
has allowed us to eliminate the pressure $p$ from the system and
provides a formula for its approximation through a post-processing
procedure, whereas the last equation takes care of the requirement
that $(p,1)_\Omega = 0$.


\subsection{The variational formulation}

In this section we derive our mixed variational formulation for the
system \eqref{eq:Brinkman-Forchheimer-2}.  To that end, we begin by
multiplying the first equation of \eqref{eq:Brinkman-Forchheimer-2} by
a tensor $\btau$, living in a suitable space, say $\bbX$, which will
be described next, integrating by parts, using the identity
$\bsi^\rd:\btau = \bsi^\rd:\btau^\rd$ and the Dirichlet boundary
condition $\bu = \0$ on $\Gamma\times (0,T]$, to obtain
\begin{equation}\label{eq:vf-1}
\frac{1}{\nu}\,(\bsi^\rd,\btau^\rd)_\Omega + (\bu,\bdiv\btau)_\Omega \,=\, 0 \quad \forall\,\btau\in \bbX.
\end{equation}
In turn, the second equation of \eqref{eq:Brinkman-Forchheimer-2} is imposed weakly as follows
\begin{equation}\label{eq:vf-2}
(\partial_t\,\bu,\bv)_\Omega - (\bdiv\bsi,\bv)_\Omega + \alpha\,(\bu,\bv)_\Omega + \tF\,(|\bu|^{\rp-2}\bu,\bv)_\Omega \,=\, (\f,\bv)_\Omega \quad \forall\,\bv\in \bM,
\end{equation}
where $\bM$ is a suitable space which together with $\bbX$ is
described below.  We first note that the first term in the
left-hand side in \eqref{eq:vf-1} is well defined if $\bsi, \btau\in
\bbL^2(\Omega)$.  In turn, if $\bu, \bv\in \bL^\rp(\Omega)$, with
$\rp\in [3,4]$, then the first, third and fourth terms in the
left-hand side in \eqref{eq:vf-2} are clearly well defined, which
forces both $\bdiv\bsi$ and $\bdiv\btau$ to live in
$\bL^{\rq}(\Omega)$, with $1/\rp + 1/\rq = 1$.  According to this,
we introduce the Banach space
\begin{equation*}
\bbH(\bdiv_{\rq};\Omega) := \Big\{ \btau\in \bbL^2(\Omega) :\quad \bdiv\btau\in \bL^\rq(\Omega) \Big\},
\end{equation*}
equipped with the norm
\begin{equation*}
\|\btau\|^2_{\bdiv_{\rq};\Omega} 
\,:=\, \|\btau\|^2_{0,\Omega} + \|\bdiv\btau\|^2_{\bL^{\rq}(\Omega)}.
\end{equation*}
Notice that $\bbH(\bdiv;\Omega)\subset \bbH(\bdiv_{\rq};\Omega)$.
In this way, we deduce that the equations \eqref{eq:vf-1}--\eqref{eq:vf-2} make sense if we choose the spaces
\begin{equation*}
\bbX \,:=\, \bbH(\bdiv_{\rq};\Omega) \qan
\bM \,:=\, \bL^{\rp}(\Omega),
\end{equation*}
with their respective norms: $\|\cdot\|_\bbX := \|\cdot\|_{\bdiv_{\rq};\Omega}$ and $\|\cdot\|_\bM := \|\cdot\|_{\bL^{\rp}(\Omega)}$.

Now, for convenience of the subsequent analysis and similarly as in \cite{cgo2018-pp} (see also \cite{Gatica,cgm2019-pp}) we consider the decomposition:
\begin{equation*}
\bbX \,=\, \bbX_0\oplus \R\,\bI,
\end{equation*}
where
\begin{equation*}
\bbX_0 \,:=\, \Big\{ \btau\in \bbH(\bdiv_{\rq};\Omega) :\quad (\tr(\btau),1)_\Omega = 0 \Big\};
\end{equation*}
that is, $\R\,\bI$ is a topological supplement for $\bbX_0$.
More precisely, each $\btau\in \bbX$ can be decomposed uniquely as:
\begin{equation*}
\btau = \btau_0 + c\,\bI \quad\mbox{with}\quad \btau_0\in \bbX_0 \qan 
c:= \frac{1}{n\,|\Omega|}\,(\tr(\btau),1)_\Omega\in \R.
\end{equation*}
Then, noticing that $\btau^\rd = \btau^\rd_0$ and $\bdiv\btau =
\bdiv\btau_0$ and employing the last equation of
\eqref{eq:Brinkman-Forchheimer-2}, we deduce that both $\bsi$ and
$\btau$ can be considered hereafter in $\bbX_0$.  Hence, the weak form
associated with the Brinkman--Forchheimer equation
\eqref{eq:Brinkman-Forchheimer-2} reads: given $\f\in
  \W^{1,1}(0,T;\bL^2(\Omega))$ and $\bu(0) = \bu_0\in \bM$, for $t\in
  (0,T]$, find $(\bsi(t), \bu(t))\in \L^\infty(0,T;\bbX_0)\times
    \W^{1,\infty}(0,T;\bL^2(\Omega))\cap \L^\infty(0,T;\bM)$, such that
\begin{equation}\label{eq:weak-Brinkman-Forchheimer}
\begin{array}{rlll}
\ds \cA\,\bsi(t) + \cB'\bu(t) & = & \0 & \mbox{in }\, \bbX'_0, \\ [2ex]
\ds \frac{\partial}{\partial\,t}\,\cE\,\bu(t) - \cB\,\bsi(t) + \cC\,\bu(t) & = & \bG(t) & \mbox{in }\, \bM',
\end{array}
\end{equation}
where the operators $\cA : \bbX_0\to \bbX'_0$, $\cB : \bbX_0\to \bM'$, $\cC : \bM\to \bM'$, and the functional $\bG\in \bM'$ are defined as follows:
\begin{equation}\label{eq:operators-A-B-C}
\begin{array}{c}
\ds [\cA(\bsi),\btau] \,:=\, \frac{1}{\nu}\,(\bsi^\rd,\btau^\rd)_\Omega,\quad
[\cB(\btau),\bv] \,:=\, (\bdiv\btau,\bv)_\Omega, \\ [3ex]
\ds [\cC(\bu),\bv] \,:=\, \alpha\,(\bu,\bv)_\Omega + \tF\,(|\bu|^{\rp-2}\bu,\bv)_\Omega,
\end{array}
\end{equation}
and
\begin{equation}\label{eq:functionals-G}
[\bG,\bv] := (\f,\bv)_\Omega.
\end{equation}
In addition, the operator $\cE : \bM\to \bM'$ is given by:
\begin{equation}\label{eq:operators-E}
[\cE(\bu),\bv] \,:=\, (\bu,\bv)_\Omega.
\end{equation}
In all the terms above, $[\cdot,\cdot]$ denotes the duality pairing induced by the corresponding operators.
In addition, we let $\cB': \bM\to \bbX'_0$ be the adjoint of $\cB$, which satisfy $[\cB'(\bv),\btau] = [\cB(\btau),\bv]$ for all $\btau\in \bbX_0$ and $\bv\in \bM$.


\section{Well-posedness of the model}\label{sec:well-posedness-model}

In this section we establish the solvability of \eqref{eq:weak-Brinkman-Forchheimer}.
To that end we first collect some previous results that will be used in the forthcoming analysis.


\subsection{Preliminaries}

Let us now discuss the stability properties of the operators involved.
We begin by observing that the operators $\cA, \cB, \cE$ and the functional $\bG$ are linear.
In turn, from \eqref{eq:operators-A-B-C}, \eqref{eq:functionals-G} and \eqref{eq:operators-E}, employing H\"older's and Cauchy--Schwarz inequalities, there hold
\begin{equation*}
\big|[\cB(\btau),\bv]\big| 
\,\leq\, \|\btau\|_{\bbX}\,\|\bv\|_{\bM} \quad \forall\,(\btau,\bv)\in \bbX_0\times \bM,
\end{equation*}
\begin{equation*}
\big|[\bG,\bv]\big| \,\leq\, \|\f\|_{\bM'}\|\bv\|_{\bM} \quad \forall\,\bv\in \bM,
\end{equation*}
and
\begin{equation*}
\big|[\cE(\bu),\bv]\big| \,\leq\, |\Omega|^{(\rp-2)/\rp}\,\|\bu\|_\bM\,\|\bv\|_\bM,\quad
[\cE(\bv),\bv] \,\geq\, \|\bv\|^2_{0,\Omega} \quad \forall\,\bu, \bv\in \bM,
\end{equation*}
which implies that $\cB, \bG$ are bounded and continuous, and $\cE$ is bounded, continuous and monotone.

Next, we summarize some properties of the operators $\cA$ and $\cC$.
\begin{lem}\label{lem:properties-operators-A-C}
The operators $\cA$ and $\cC$ are bounded, continuous, and monotone.
Moreover, $\cC$ is coercive.
\end{lem}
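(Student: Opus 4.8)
The plan is to treat the two operators separately, disposing of the linear operator $\cA$ at once and concentrating the real work on the nonlinear Forchheimer term carried by $\cC$. For $\cA$, which is linear, all three claims reduce to the single observation that the deviatoric map is a contraction in $\bbL^2(\Omega)$, i.e. $\|\btau^\rd\|_{0,\Omega}\le\|\btau\|_{0,\Omega}$. Combined with Cauchy--Schwarz this gives $|[\cA(\bsi),\btau]|\le \nu^{-1}\|\bsi\|_{0,\Omega}\|\btau\|_{0,\Omega}\le\nu^{-1}\|\bsi\|_\bbX\|\btau\|_\bbX$, so $\cA$ is bounded and, being linear, continuous. Monotonicity is free as well: testing against $\bsi-\btau$ yields $[\cA(\bsi)-\cA(\btau),\bsi-\btau]=\nu^{-1}\|(\bsi-\btau)^\rd\|_{0,\Omega}^2\ge 0$.

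For $\cC$ I would first record the exact algebra of the Forchheimer term. Writing $\rq=\rp/(\rp-1)$ and using $|\,|\ba|^{\rp-2}\ba\,|=|\ba|^{\rp-1}$ pointwise, a direct computation gives $\||\bv|^{\rp-2}\bv\|_{\bL^\rq(\Omega)}=\|\bv\|_{\bL^\rp(\Omega)}^{\rp-1}$. Together with the embedding $\|\bv\|_{0,\Omega}\le|\Omega|^{(\rp-2)/(2\rp)}\|\bv\|_\bM$ (H\"older, valid since $\rp\ge 2$), this yields boundedness: for $\bu,\bv\in\bM$, $|[\cC(\bu),\bv]|\le\big(\alpha\,|\Omega|^{(\rp-2)/\rp}+\tF\,\|\bu\|_\bM^{\rp-2}\big)\|\bu\|_\bM\|\bv\|_\bM$, so $\cC$ maps bounded sets into bounded sets. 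The same algebra makes coercivity transparent: since $[\cC(\bv),\bv]=\alpha\|\bv\|_{0,\Omega}^2+\tF\|\bv\|_\bM^\rp\ge\tF\|\bv\|_\bM^\rp$, we obtain $[\cC(\bv),\bv]/\|\bv\|_\bM\ge\tF\|\bv\|_\bM^{\rp-1}\to\infty$ as $\|\bv\|_\bM\to\infty$.

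The two remaining properties, continuity and monotonicity of $\cC$, are where the nonlinearity must be confronted, and they hinge on two classical pointwise vector inequalities valid for $\rp\ge 2$ (hence on all of $\rp\in[3,4]$): for all $\ba,\bb\in\R^n$,
\begin{gather*}
\big|\,|\ba|^{\rp-2}\ba - |\bb|^{\rp-2}\bb\,\big| \,\le\, (\rp-1)\,(|\ba|+|\bb|)^{\rp-2}\,|\ba-\bb|, \\
\big(|\ba|^{\rp-2}\ba - |\bb|^{\rp-2}\bb\big)\cdot(\ba-\bb) \,\ge\, 0.
\end{gather*}
For continuity I would insert the first inequality and split the resulting integral by H\"older with the conjugate exponents $\rp/(\rp-2)$ and $\rp$ (compatible with $\bL^\rq$ since $(\rp-2)/\rp+1/\rp=1/\rq$), obtaining $\||\bu|^{\rp-2}\bu-|\bw|^{\rp-2}\bw\|_{\bL^\rq(\Omega)}\le(\rp-1)(\|\bu\|_\bM+\|\bw\|_\bM)^{\rp-2}\|\bu-\bw\|_\bM$; adding the trivially continuous linear part gives local Lipschitz continuity of $\cC$, hence continuity. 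For monotonicity I would integrate the second inequality over $\Omega$ and add the nonnegative Darcy contribution $\alpha\|\bu-\bv\|_{0,\Omega}^2$, yielding $[\cC(\bu)-\cC(\bv),\bu-\bv]\ge 0$. The only genuine obstacle is the treatment of the Nemytskii operator $\bv\mapsto|\bv|^{\rp-2}\bv$ as a map $\bM\to\bM'$; once the two displayed inequalities are invoked, the H\"older bookkeeping with the exponent triple $(\rp,\ \rp/(\rp-2),\ \rq)$ closes all four assertions, and I would cite the standard source for these inequalities rather than reprove them.
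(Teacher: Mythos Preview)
Your proof is correct and follows essentially the same route as the paper: Cauchy--Schwarz plus the deviatoric contraction for $\cA$, and for $\cC$ the standard pointwise inequalities for the map $\ba\mapsto|\ba|^{\rp-2}\ba$ combined with H\"older bookkeeping on the triple $(\rp,\rp/(\rp-2),\rq)$; the paper simply cites \cite{gm1975} for these inequalities where you spell them out. One small difference worth noting: for monotonicity the paper records the sharper quantitative bound $[\cC(\bu)-\cC(\bv),\bu-\bv]\ge\alpha\,\|\bu-\bv\|_{0,\Omega}^2+\tF\,C_\rp\,\|\bu-\bv\|_\bM^\rp$ (again via \cite{gm1975}), which is not needed for the lemma as stated but is invoked later in the uniqueness and error analyses.
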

\begin{proof}
From the definition of $\cA$ and $\cC$ (cf. \eqref{eq:operators-A-B-C}), and employing Cauchy--Schwarz and H\"older's inequalities, for all $\bsi, \btau\in \bbX_0$ and for all $\bu,\bv\in \bM$, we deduce the following bounds:	
\begin{equation}\label{eq:properties-operator-A}
\big|[\cA(\bsi),\btau]\big| \,\leq\, \frac{1}{\nu}\,\|\bsi^\rd\|_{\bbX}\,\|\btau^\rd\|_{\bbX}
\,\leq\, \frac{1}{\nu}\,\|\bsi\|_{\bbX}\,\|\btau\|_{\bbX},\quad
[\cA(\btau),\btau] \,\geq\, \frac{1}{\nu}\,\|\btau^\rd\|^2_{0,\Omega}, 
\end{equation}
\begin{equation}\label{eq:properties-operator-C-1}
\big|[\cC(\bu),\bv]\big| \,\leq\, \Big(\alpha\,|\Omega|^{(\rp-2)/(2\rp)}\,\|\bu\|_{0,\Omega} + \tF\,\|\bu\|^{\rp-1}_{\bM}\Big)\,\|\bv\|_\bM,\quad
[\cC(\bv),\bv] \,\geq\, \alpha\,\|\bv\|^2_{0,\Omega} + \tF\,\|\bv\|^{\rp}_\bM,
\end{equation}
which imply that the operators are bounded and non-negative, and $\cC$ is coercive.
In addition, since $\cA$ is linear, its continuity and monotonicity follows from \eqref{eq:properties-operator-A}.
In turn, from the definition of $\cC$ (cf. \eqref{eq:operators-A-B-C}), it follows that
\begin{equation}\label{eq:properties-operator-C-2a}
\begin{array}{l}
\ds [\cC(\bv_1) - \cC(\bv_2),\bv] \,=\, \alpha\,(\bv_1 - \bv_2,\bv)_{\Omega} + \tF\,(|\bv_1|^{\rp-2}\bv_1 - |\bv_2|^{\rp-2}\bv_2,\bv)_{\Omega} \\ [2ex] 
\ds\quad\,\leq\, \alpha\,\|\bv_1 - \bv_2\|_{0,\Omega}\,\|\bv\|_{0,\Omega} + \tF\,\||\bv_1|^{\rp-2}\bv_1 - |\bv_2|^{\rp-2}\bv_2\|_{\bM'}\,\|\bv\|_{\bM} \quad \forall\,\bv_1, \bv_2, \bv\in \bM.
\end{array}
\end{equation}
Proceeding analogously to \cite[Proposition~5.3]{gm1975}, we deduce from \eqref{eq:properties-operator-C-2a} that there exists $c_{\rp} > 0$, depending only on $|\Omega|$ and $\rp$, such that
\begin{equation}\label{eq:properties-operator-C-2}
\begin{array}{l}
\ds \|\cC(\bv_1) - \cC(\bv_2)\|_{\bM'} 
\,\leq\, \alpha\,|\Omega|^{(\rp-2)/(2\rp)}\,\|\bv_1 - \bv_2\|_{0,\Omega} +  \tF\,c_{\rp}\,(\|\bv_1\|_\bM + \|\bv_2\|_\bM)^{\rp-2}\,\|\bv_1 - \bv_2\|_\bM \\ [2ex]
\ds\quad\,\leq\, \Big( \alpha\,|\Omega|^{(\rp-2)/\rp} +  \tF\,c_{\rp}\,(\|\bv_1\|_\bM + \|\bv_2\|_\bM)^{\rp-2} \Big)\,\|\bv_1 - \bv_2\|_\bM,
\end{array}
\end{equation}
concluding the continuity of $\cC$.
Finally, thanks to \cite[Lemma~5.1]{gm1975}, there exist $C_\rp > 0$, depending only on $|\Omega|$ and $\rp$, such that
\begin{equation*}
(|\bu|^{\rp-2}\bu - |\bv|^{\rp-2}\bv,\bu - \bv)_{\Omega} \,\geq\, C_{\rp}\,\|\bu - \bv\|^{\rp}_{\bM} \quad \forall\, \bu, \bv\in \bM,
\end{equation*}
which, together with the definition \eqref{eq:operators-A-B-C} of $\cC$, yields
\begin{equation}\label{eq:properties-operator-C-3}
[\cC(\bu) - \cC(\bv),\bu - \bv] \,\geq\, \alpha\,\|\bu - \bv\|^2_{0,\Omega} + \tF\,C_{\rp}\,\|\bu - \bv\|^{\rp}_{\bM} \quad \forall\, \bu, \bv\in \bM.
\end{equation}
Therefore, $\cC$ is monotone, which completes the proof.
\end{proof}

Now, we state the inf-sup condition associated with the operator $\cB$.
Since the operator $\cC$ is coercive, this result is not necessary to prove the well-posedness of the problem but it will be useful to obtain the stability bound.
\begin{lem}\label{lem:inf-sup-operator-B}
There exists a constant $\beta > 0$ such that
\begin{equation}\label{eq:inf-sup-operator-B}
\sup_{\0\neq \btau\in \bbX_0} \frac{[\cB(\btau),\bv]}{\|\btau\|_{\bbX}} \,\geq\, \beta\,\|\bv\|_{\bM} \quad \forall\,\bv\in \bM.
\end{equation}
\end{lem}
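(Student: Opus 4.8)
The plan is to establish \eqref{eq:inf-sup-operator-B} by exhibiting, for each fixed $\bv\in\bM$, an explicit competitor $\btau\in\bbX_0$ for the supremum. Since the pairing $[\cB(\btau),\bv]=(\bdiv\btau,\bv)_\Omega$ is a duality product between $\bL^\rq(\Omega)$ and $\bL^\rp(\Omega)$, the natural choice is to take $\btau$ so that $\bdiv\btau$ equals the $\bL^\rq$-dual of $\bv$. Concretely, I would set $\bw:=|\bv|^{\rp-2}\bv$ and look for $\btau$ with $\bdiv\btau=\bw$. The algebraic identity $(\rp-1)\rq=\rp$ gives $(\bw,\bv)_\Omega=\|\bv\|^\rp_\bM$ and $\|\bw\|_{\bL^\rq(\Omega)}=\|\bv\|^{\rp-1}_\bM$, so that, provided the construction satisfies $\|\btau\|_\bbX\le C\,\|\bw\|_{\bL^\rq(\Omega)}$, one obtains $[\cB(\btau),\bv]/\|\btau\|_\bbX \ge \|\bv\|^\rp_\bM/(C\,\|\bv\|^{\rp-1}_\bM)=C^{-1}\|\bv\|_\bM$, which is exactly \eqref{eq:inf-sup-operator-B} with $\beta=C^{-1}$. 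Thus the whole statement reduces to constructing a bounded right inverse of $\bdiv:\bbX_0\to\bL^\rq(\Omega)$.

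For that right inverse I would proceed row by row. Writing $\bw=(w_1,\dots,w_n)$, for each $i$ I solve the scalar Dirichlet problem $\Delta z_i=w_i$ in $\Omega$, $z_i=0$ on $\Gamma$, and set the $i$-th row of $\btau$ to be $\nabla z_i$, so that $\bdiv\btau=\bw$ by construction and the divergence part of $\|\btau\|_\bbX$ is precisely $\|\bw\|_{\bL^\rq(\Omega)}$. The nontrivial point is the $\bbL^2$-bound on $\btau$. Here I would invoke the Sobolev embedding: since $\rp\in[3,4]$ forces $\rq\in[4/3,3/2]$, for $n\in\{2,3\}$ one has $\H^1_0(\Omega)\hookrightarrow\L^\rp(\Omega)$ (this uses $\rp\le 6$ when $n=3$), and hence by duality $\L^\rq(\Omega)\hookrightarrow\H^{-1}(\Omega)$ with $\|w_i\|_{-1,\Omega}\le C\,\|w_i\|_{\L^\rq(\Omega)}$. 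The standard $\H^1_0$ energy estimate for the Laplacian then yields $\|\nabla z_i\|_{0,\Omega}\le C\,\|w_i\|_{-1,\Omega}\le C\,\|w_i\|_{\L^\rq(\Omega)}$, and summing over $i$ gives $\|\btau\|_{0,\Omega}\le C\,\|\bw\|_{\bL^\rq(\Omega)}$. Combined with the divergence bound, this shows $\btau\in\bbX$ with $\|\btau\|_\bbX\le C\,\|\bw\|_{\bL^\rq(\Omega)}$.

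It remains to land in $\bbX_0$ rather than merely in $\bbX$. I would subtract the constant-trace part, setting $\btau_0:=\btau-c\,\bI$ with $c:=\tfrac{1}{n\,|\Omega|}(\tr(\btau),1)_\Omega$, exactly the decomposition already introduced in Section~\ref{sec:continuous-formulation}. Since $\bdiv(c\,\bI)=\0$, we retain $\bdiv\btau_0=\bw$, so $[\cB(\btau_0),\bv]=\|\bv\|^\rp_\bM$ is unchanged, while $|c|\le C\,\|\btau\|_{0,\Omega}$ gives $\|c\,\bI\|_\bbX\le C\,\|\bw\|_{\bL^\rq(\Omega)}$ and hence $\|\btau_0\|_\bbX\le C\,\|\bw\|_{\bL^\rq(\Omega)}$ as well. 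Using $\btau_0$ as the competitor then closes the argument with the same $\beta$.

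I expect the main obstacle to be the $\bbL^2$-control of the lifting $\btau$, i.e. verifying that the admissible exponents $\rp\in[3,4]$ keep $\L^\rq(\Omega)$ continuously embedded in $\H^{-1}(\Omega)$ in both dimensions $n=2,3$; once that embedding is secured the remaining estimates are routine. An alternative, if one prefers to avoid the explicit Sobolev argument, would be to cite a ready-made surjectivity/right-inverse result for the divergence in the Banach setting (as in \cite{cgo2018-pp,Gatica}); the Poisson-based construction above, however, is self-contained and makes transparent that $\beta$ depends only on $\Omega$, $n$ and $\rp$.
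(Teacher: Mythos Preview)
Your argument is correct. The construction---choosing $\bw=|\bv|^{\rp-2}\bv\in\bL^\rq(\Omega)$ as the dual of $\bv$, lifting it row by row via Dirichlet Poisson problems to obtain $\btau$ with $\bdiv\btau=\bw$, controlling $\|\btau\|_{0,\Omega}$ through the embedding $\L^\rq(\Omega)\hookrightarrow\H^{-1}(\Omega)$ (valid precisely because $\rp\le 6$ when $n=3$), and then correcting the trace---is the standard route and all the estimates go through as you claim.

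The paper itself does not give a proof: it simply cites \cite[Lemma~3.4]{cgo2018-pp} for the case $\rp=4$, $\rq=4/3$ and asserts that the argument extends to $\rp\in[3,4]$. Your write-up is thus a self-contained version of what the paper defers to a reference, and you correctly identify the one place where the range of $\rp$ actually matters (the Sobolev embedding in dimension $n=3$). The benefit of your approach is that it makes explicit the dependence of $\beta$ on $\Omega$, $n$, and $\rp$, and requires no external black box; the paper's citation, on the other hand, keeps the exposition short. Substantively the two are the same.
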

\begin{proof}
For the case $\rp = 4$ and $\rq = 4/3$ we refer the reader to \cite[Lemma~3.4]{cgo2018-pp}, which proof can be easily extended to the case $\rp\in [3,4]$ and $1/\rp + 1/\rq = 1$.
We omit further details.
\end{proof}

In addition, a key result that we will use to establish the existence of a solution to \eqref{eq:weak-Brinkman-Forchheimer} is the following theorem \cite[Theorem~IV.6.1(b)]{Showalter}.
\begin{thm}\label{thm:well-posed-parabolic-problem}
Let the linear, symmetric and monotone operator $\cN$ be given for the real vector space $E$ to its algebraic dual $E^*$, and let $E'_b$ be the Hilbert space which is the dual of $E$ with the seminorm
\begin{equation*}
|x|_b = \big(\cN\,x(x)\big)^{1/2} \quad x\in E.
\end{equation*}
Let $\cM\subset E\times E'_b$ be a relation with domain $\cD = \Big\{ x\in E \,:\, \cM(x) \neq \emptyset \Big\}$.
	
Assume $\cM$ is monotone and $Rg(\cN + \cM) = E'_b$.
Then, for each $u_0\in \cD$ and for each $f\in \W^{1,1}(0,T;E'_b)$, there is a solution $u$ of
\begin{equation*}
\frac{d}{dt}\big(\cN\,u(t)\big) + \cM\big(u(t)\big) \ni f(t) \quad 0 < t < T,
\end{equation*}
with
\begin{equation*}
\cN\,u\in \W^{1,\infty}(0,T;E'_b),\quad u(t)\in \cD,\quad \mbox{ for all }\, 0\leq t\leq T,\qan \cN\,u(0) = \cN\,u_0.
\end{equation*}
\end{thm}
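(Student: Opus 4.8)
The plan is to eliminate the degeneracy introduced by the (possibly non-injective) operator $\cN$ by means of the change of variable $w(t):=\cN u(t)$, which transforms the degenerate evolution inclusion into a standard Cauchy problem on the Hilbert space $E'_b$ governed by a maximal monotone operator; the conclusion then follows from the classical Hilbert-space theory of evolution equations driven by maximal monotone operators.

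I would first make the Hilbert structure precise. Since $\cN$ is linear, symmetric and monotone, the bilinear form $(x,y)_b:=\cN x(y)$ is symmetric and positive semidefinite on $E$, so that $|x|_b=(x,x)_b^{1/2}$ is a seminorm and the Cauchy--Schwarz inequality $|\cN x(y)|\le |x|_b\,|y|_b$ holds. Consequently $\cN$ maps $(E,|\cdot|_b)$ continuously into its dual $E'_b$, and, passing to the completion of $E$ modulo the kernel of $|\cdot|_b$ and applying the Riesz isomorphism, one obtains the compatibility identity
\[
(g,\cN x)_{E'_b}=g(x)\qquad\forall\,g\in E'_b,\ x\in E,
\]
and in particular $(\cN x,\cN y)_{E'_b}=(x,y)_b$. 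This identity is the bridge linking the duality pairing on $E\times E'_b$ to the inner product of $E'_b$, and I would establish it directly from the definition of $E'_b$.

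Next I would transport the relation $\cM$ to an operator on $H:=E'_b$. Define $\wt{\cM}\subset H\times H$ by declaring $[w,g]\in\wt{\cM}$ whenever there is $x\in\cD$ with $\cN x=w$ and $[x,g]\in\cM$; with $w=\cN u$ the inclusion $\tfrac{d}{dt}\cN u+\cM u\ni f$ becomes $w'(t)+\wt{\cM}\,w(t)\ni f(t)$ in $H$. Two properties must be verified. First, $\wt{\cM}$ is monotone on $H$: for $[w_i,g_i]\in\wt{\cM}$ with witnesses $x_i$ ($i=1,2$), the compatibility identity and the linearity of $\cN$ give $(g_1-g_2,w_1-w_2)_H=(g_1-g_2)(x_1-x_2)\ge 0$ by the monotonicity of $\cM$. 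Second, $Rg(I+\wt{\cM})=H$: the hypothesis $Rg(\cN+\cM)=E'_b$ provides, for every $f\in H$, elements $x\in\cD$ and $g\in\cM(x)$ with $\cN x+g=f$, whence $w:=\cN x$ satisfies $w+g=f$ with $[w,g]\in\wt{\cM}$. By Minty's theorem, a monotone operator with $Rg(I+\wt{\cM})=H$ is maximal monotone.

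It then remains to apply the classical theory of Brezis for evolution equations governed by a maximal monotone operator in a Hilbert space: taking $w_0:=\cN u_0$, which lies in $D(\wt{\cM})$ because $u_0\in\cD$, and using $f\in\W^{1,1}(0,T;H)$, one obtains a (unique) Lipschitz solution $w\in\W^{1,\infty}(0,T;H)$ with $w(0)=w_0$ and $w'(t)+\wt{\cM}\,w(t)\ni f(t)$ for a.e.\ $t$. Finally I would recover $u$ by a pointwise selection: for each $t$, $w(t)\in D(\wt{\cM})$ yields, by the very definition of $\wt{\cM}$, some $u(t)\in\cD$ with $\cN u(t)=w(t)$; then $\cN u=w\in\W^{1,\infty}(0,T;H)$, $u(t)\in\cD$ for all $t$, and $\cN u(0)=w_0=\cN u_0$, which is precisely the asserted conclusion. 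I expect the main obstacle to be the rigorous construction of $E'_b$ and the compatibility identity when $|\cdot|_b$ is genuinely degenerate, so that $\cN$ is not injective and the recovery $w\mapsto u$ is only a multivalued selection; this is harmless here only because the theorem demands no regularity or measurability of $t\mapsto u(t)$ as an $E$-valued map, requiring $\cN u\in\W^{1,\infty}$ together with the pointwise conditions $u(t)\in\cD$ and $\cN u(0)=\cN u_0$, so that a pointwise choice of $u(t)$ suffices.
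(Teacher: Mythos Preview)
The paper does not supply its own proof of this statement: it is quoted verbatim as a known result, with citation to \cite[Theorem~IV.6.1(b)]{Showalter}. So there is no ``paper's proof'' to compare against; the authors treat it as a black box and verify its hypotheses in the surrounding lemmas.

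Your outline is in fact the standard strategy behind Showalter's theorem, and it is essentially correct: pass to $w=\cN u$, show that the induced relation $\wt{\cM}$ on $H=E'_b$ is maximal monotone via Minty's criterion using $Rg(\cN+\cM)=E'_b$, invoke the Brezis theory for $w'+\wt{\cM}w\ni f$ with $w_0=\cN u_0\in D(\wt{\cM})$ and $f\in\W^{1,1}(0,T;H)$, and then make a pointwise selection $u(t)$ with $\cN u(t)=w(t)$. The one place I would tighten is the compatibility identity $(g,\cN x)_{E'_b}=g(x)$: this is really the definition of the Riesz map on the completion of $E/\ker|\cdot|_b$, and you should state explicitly that $E'_b$ is identified with that completion so the identity is immediate rather than something to ``establish''. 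Your closing remark about the selection $w\mapsto u$ is on point: since the conclusion only asks for $\cN u\in\W^{1,\infty}(0,T;E'_b)$ together with the pointwise membership $u(t)\in\cD$, a non-measurable choice of $u(t)$ is permitted, and no further argument is needed.
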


We end this section recalling, for later use, that there exists a positive constant $C_\rd$, such that
\begin{equation}\label{eq:deviatoric-inequality}
C_\rd\,\|\btau\|^2_{0,\Omega} \,\leq\, \|\btau^\rd\|^2_{0,\Omega} + \|\bdiv\btau\|^2_{\bL^{\rq}(\Omega)} \quad \forall\,\btau\in \bbX_0.
\end{equation}
We remark here that the case $\rq = 4/3$ of \eqref{eq:deviatoric-inequality} is stated in \cite[Lemma~3.2]{cgo2018-pp} which proof can be easily extended to the general case $\rq\in (1,2)$.


\subsection{The resolvent system}

In this section we focus on proving the hypotheses of Theorem~\ref{thm:well-posed-parabolic-problem} to prove the well-posedness of \eqref{eq:weak-Brinkman-Forchheimer}.
To that end, we denote by $\bM_2$ the closure of the space $\bM$ with respect to the norm and inner product induced by the operator $\cE$ (cf. \eqref{eq:operators-E}), that is,
\begin{equation*}
\|\bv\|^2_{\bM_2} := (\bv,\bv)_\Omega,\quad
(\bv_1,\bv_2)_{\bM_2} := (\bv_1,\bv_2)_\Omega.
\end{equation*}
Notice that $\bM_2 = \bL^2(\Omega) \supset \bM$. 
To verify the range condition in Theorem~\ref{thm:well-posed-parabolic-problem}, let us consider the resolvent system associated with \eqref{eq:weak-Brinkman-Forchheimer}.
Find $(\bsi,\bu)\in \bbX_0\times \bM$ such that:
\begin{equation}\label{eq:resolvent-system}
\begin{array}{rlll}
\ds \cA\,\bsi + \cB'\bu & = & \0 & \mbox{in }\, \bbX'_0, \\ [2ex]
\ds -\,\cB\,\bsi + (\cE + \cC)\,\bu & = & \wh{\bG} & \mbox{in }\, \bM',
\end{array}
\end{equation}
where $\wh{\bG}\in \bM'_2 = \bL^2(\Omega)\subset \bM'$ is a functional given by $\wh{\bG}(\bv) := (\wh{\f},\bv)_\Omega$ for some $\wh{\f}\in \bM'_2$.
Next, in order to prove the well-posedness of \eqref{eq:resolvent-system}, we introduce an operator that will be used to regularize the problem.
Let $\cR_{\bsi} : \bbX_0\to \bbX'_0$ be defined by
\begin{equation}\label{eq:operator-R-bsi}
[\cR_{\bsi}(\bsi),\btau] \,:=\, \left(|\bdiv\bsi|^{\rq-2}\bdiv\bsi,\bdiv\btau\right)_\Omega \quad \forall\,\bsi, \btau\in \bbX_0.
\end{equation}

\begin{lem}\label{lem:R-sigma-properties}
The operator $\cR_{\bsi}$ is bounded, continuous, and monotone.
\end{lem}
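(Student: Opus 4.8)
The plan is to recognize $\cR_{\bsi}$ as the divergence-based $\rq$-Laplacian and to reduce all three claimed properties to elementary inequalities for the vector field $\xi\mapsto|\xi|^{\rq-2}\xi$. Throughout I would exploit the conjugacy relation $1/\rp+1/\rq=1$, which yields the two identities $\rp\,(\rq-1)=\rq$ and $\rq/\rp=\rq-1$, together with the fact that $\rp\in[3,4]$ forces $\rq\in(1,2)$, so that $\rq-1\in(0,1)$.

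First, boundedness. For $\bsi\in\bbX_0$ the field $|\bdiv\bsi|^{\rq-2}\bdiv\bsi$ belongs to $\bL^\rp(\Omega)$, since $\big\||\bdiv\bsi|^{\rq-2}\bdiv\bsi\big\|^\rp_{\bL^\rp(\Omega)}=\int_\Omega|\bdiv\bsi|^{(\rq-1)\rp}=\|\bdiv\bsi\|^\rq_{\bL^\rq(\Omega)}$, whence $\big\||\bdiv\bsi|^{\rq-2}\bdiv\bsi\big\|_{\bL^\rp(\Omega)}=\|\bdiv\bsi\|^{\rq-1}_{\bL^\rq(\Omega)}$. Applying H\"older's inequality to the $\bL^\rp$--$\bL^\rq$ duality pairing in \eqref{eq:operator-R-bsi} then gives $\big|[\cR_{\bsi}(\bsi),\btau]\big|\leq\|\bdiv\bsi\|^{\rq-1}_{\bL^\rq(\Omega)}\,\|\bdiv\btau\|_{\bL^\rq(\Omega)}\leq\|\bsi\|^{\rq-1}_{\bbX}\,\|\btau\|_{\bbX}$, that is, $\|\cR_{\bsi}(\bsi)\|_{\bbX'_0}\leq\|\bsi\|^{\rq-1}_{\bbX}$, which is the desired boundedness.

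For continuity I would invoke the standard elementary inequality valid for $1<\rq\leq2$: there is $C>0$ depending only on $\rq$ (and $n$) with $\big||\ba|^{\rq-2}\ba-|\bb|^{\rq-2}\bb\big|\leq C\,|\ba-\bb|^{\rq-1}$ for all $\ba,\bb\in\R^n$. Raising to the power $\rp$, integrating over $\Omega$, and using $(\rq-1)\rp=\rq$ once more yields $\big\||\bdiv\bsi|^{\rq-2}\bdiv\bsi-|\bdiv\btau|^{\rq-2}\bdiv\btau\big\|_{\bL^\rp(\Omega)}\leq C\,\|\bdiv(\bsi-\btau)\|^{\rq-1}_{\bL^\rq(\Omega)}$, and hence, after H\"older, $\|\cR_{\bsi}(\bsi)-\cR_{\bsi}(\btau)\|_{\bbX'_0}\leq C\,\|\bsi-\btau\|^{\rq-1}_{\bbX}$. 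This uniform (H\"older) continuity gives continuity in particular. Monotonicity follows pointwise: since $\xi\mapsto\frac1\rq|\xi|^\rq$ is convex on $\R^n$ with gradient $|\xi|^{\rq-2}\xi$, the classical inequality $\big(|\ba|^{\rq-2}\ba-|\bb|^{\rq-2}\bb\big)\cdot(\ba-\bb)\geq0$ holds for all $\ba,\bb\in\R^n$; taking $\ba=\bdiv\bsi$, $\bb=\bdiv\btau$ and integrating over $\Omega$ delivers $[\cR_{\bsi}(\bsi)-\cR_{\bsi}(\btau),\bsi-\btau]\geq0$.

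The only genuinely delicate point is the continuity estimate, because the relevant elementary vector inequalities for $|\xi|^{\rq-2}\xi$ are regime-dependent: in the present range $\rq\in(1,2)$ it is the H\"older bound with exponent $\rq-1\in(0,1)$ that holds, whereas the Lipschitz version degenerates as $\rq\downarrow1$. I would therefore either cite a standard reference for these inequalities or verify the H\"older bound by a short convexity/scaling argument; everything else is a direct consequence of H\"older's inequality and the conjugacy identities above.
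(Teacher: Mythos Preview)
Your proof is correct and follows essentially the same route as the paper: both argue boundedness via H\"older (your exponent $\rq-1$ equals the paper's $\rq/\rp$), both obtain continuity from the H\"older estimate $\big||\ba|^{\rq-2}\ba-|\bb|^{\rq-2}\bb\big|\leq C|\ba-\bb|^{\rq-1}$ valid for $\rq\in(1,2)$ (the paper cites \cite[Lemma~5.4]{gm1975} for this), and both establish monotonicity pointwise. The only difference is that the paper records the sharper quantitative monotonicity bound $[\cR_\bsi(\btau_1)-\cR_\bsi(\btau_2),\btau_1-\btau_2]\geq C\,\|\bdiv(\btau_1-\btau_2)\|^2_{\bL^\rq(\Omega)}\big/(\|\bdiv\btau_1\|_{\bL^\rq(\Omega)}+\|\bdiv\btau_2\|_{\bL^\rq(\Omega)})^{2-\rq}$ from \cite[Proposition~5.2]{gm1975}, whereas your convexity argument yields only nonnegativity---which is all the lemma asserts.
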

\begin{proof}
From the definition of $\cR_\bsi$ (cf. \eqref{eq:operator-R-bsi}), and employing H\"older's inequality, we deduce the following bounds:
\begin{equation}\label{eq:R-bsi-bounded-coercive}
\big|[\cR_\bsi(\bsi),\btau]\big| \,\leq\, \|\bdiv\bsi\|^{\rq/\rp}_{\bL^{\rq}(\Omega)}\,\|\bdiv\btau\|_{\bL^{\rq}(\Omega)}, \quad
[\cR_\bsi(\btau),\btau] \,\geq\, \|\bdiv\btau\|^{\rq}_{\bL^{\rq}(\Omega)} \quad \forall\,\bsi, \btau\in \bbX_0,
\end{equation}
which imply that $\cR_\bsi$ is bounded and non-negative.
In turn, using again the definition of $\cR_\bsi$, we get
\begin{equation*}
[\cR_\bsi(\btau_1) - \cR_\bsi(\btau_2),\btau] 
\,=\, \left(|\bdiv\btau_1|^{\rq-2}\bdiv\btau_1 - |\bdiv\btau_2|^{\rq-2}\bdiv\btau_2,\bdiv\btau\right)_\Omega \quad \forall\,\btau_1, \btau_2,\btau\in \bbX_0.
\end{equation*}
Employing \cite[Lemma~5.4]{gm1975}, we obtain 
\begin{equation*}
\|\cR_\bsi(\btau_1) - \cR_\bsi(\btau_2)\|_{\bbX'} 
\,\leq\, C\,\|\bdiv\btau_1 - \bdiv\btau_2\|^{\rq/\rp}_{\bL^{\rq}(\Omega)} 
\,\leq\, C\,\|\btau_1 - \btau_2\|^{\rq/\rp}_{\bbX},
\end{equation*}
which implies the continuity of $\cR_\bsi$.
Finally, proceeding analogously to \cite[Proposition~5.2]{gm1975}, there exist a positive constant $C>0$ such that
\begin{equation*}
\begin{array}{l}
\ds [\cR_\bsi(\btau_1) - \cR_\bsi(\btau_2),\btau_1 - \btau_2] 
\,=\, \left(|\bdiv\btau_1|^{\rq-2}\bdiv\btau_1 - |\bdiv\btau_2|^{\rq-2}\bdiv\btau_2, \bdiv(\btau_1 - \btau_2)\right)_\Omega \\ [2ex]
\ds\quad \geq \, C\,\frac{\|\bdiv(\btau_1 - \btau_2)\|^2_{\bL^{\rq}(\Omega)}}{(\|\bdiv\btau_1\|_{\bL^{\rq}(\Omega)} + \|\bdiv\btau_2\|_{\bL^{\rq}(\Omega)})^{2-\rq}} \quad \forall\, \btau_1, \btau_2\in \bbX_0.
\end{array}
\end{equation*}
Therefore $\cR_\bsi$ is monotone which concludes the proof.
\end{proof}

Now, a solution to \eqref{eq:resolvent-system} is established by taking a limit of the regularized solutions as the regularization parameter goes to zero.
\begin{lem}\label{lem:resolvent-system}
Given $\wh{\bG}\in \bM'_2$, there exists a solution $(\bsi,\bu)\in \bbX_0\times \bM$ of the resolvent system \eqref{eq:resolvent-system}.
\end{lem}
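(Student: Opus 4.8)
The plan is to realize \eqref{eq:resolvent-system} as the $\varepsilon\to 0$ limit of a family of regularized problems to which the classical Browder--Minty surjectivity theorem for monotone operators applies. For $\varepsilon>0$ I would consider the regularized resolvent system: find $(\bsi_\varepsilon,\bu_\varepsilon)\in\bbX_0\times\bM$ such that
\begin{equation*}
\cA\,\bsi_\varepsilon + \varepsilon\,\cR_\bsi(\bsi_\varepsilon) + \cB'\bu_\varepsilon = \0 \quad\mbox{in }\bbX'_0,
\qquad
-\,\cB\,\bsi_\varepsilon + (\cE+\cC)\,\bu_\varepsilon = \wh{\bG} \quad\mbox{in }\bM',
\end{equation*}
the role of $\cR_\bsi$ (cf. Lemma~\ref{lem:R-sigma-properties}) being to supply the coercive control of $\bdiv\bsi_\varepsilon$ that $\cA$, acting only on the deviatoric part, does not provide.

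To solve the regularized system for fixed $\varepsilon$, I would recast it as a single equation $\cT_\varepsilon(\bsi_\varepsilon,\bu_\varepsilon)=(\0,\wh{\bG})$ on the reflexive Banach space $\bbX_0\times\bM$, where
\begin{equation*}
[\cT_\varepsilon(\bsi,\bu),(\btau,\bv)] := [\cA\bsi,\btau] + \varepsilon\,[\cR_\bsi(\bsi),\btau] + [\cB'\bu,\btau] - [\cB\bsi,\bv] + [(\cE+\cC)\bu,\bv].
\end{equation*}
Testing with $(\bsi,\bu)$, the skew coupling cancels through the adjoint identity $[\cB'\bu,\bsi]=[\cB\bsi,\bu]$, so that, by Lemmas~\ref{lem:properties-operators-A-C} and \ref{lem:R-sigma-properties} together with the monotonicity of $\cE$, the operator $\cT_\varepsilon$ is monotone and continuous, and
\begin{equation*}
[\cT_\varepsilon(\bsi,\bu),(\bsi,\bu)] \,\geq\, \tfrac{1}{\nu}\,\|\bsi^\rd\|^2_{0,\Omega} + \varepsilon\,\|\bdiv\bsi\|^\rq_{\bL^\rq(\Omega)} + (1+\alpha)\,\|\bu\|^2_{0,\Omega} + \tF\,\|\bu\|^\rp_\bM.
\end{equation*}
Coercivity of $\cT_\varepsilon$ then follows by combining this bound with the deviatoric inequality \eqref{eq:deviatoric-inequality}; the delicate point is that $\|\bdiv\bsi\|_{\bL^\rq}$ enters with the sub-quadratic power $\rq<2$, so the superlinear growth of the ratio must be read off by distinguishing whether $\|\bsi^\rd\|_{0,\Omega}$ or $\|\bdiv\bsi\|_{\bL^\rq}$ dominates as $\|(\bsi,\bu)\|\to\infty$. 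Browder--Minty then delivers a solution $(\bsi_\varepsilon,\bu_\varepsilon)$.

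Next I would derive $\varepsilon$-uniform a priori bounds. Testing the regularized system with $(\bsi_\varepsilon,\bu_\varepsilon)$ and using $\wh{\bG}\in\bM'_2$ with Young's inequality \eqref{eq:Young-inequality} gives uniform control of $\|\bsi_\varepsilon^\rd\|_{0,\Omega}$, $\|\bu_\varepsilon\|_{0,\Omega}$, $\|\bu_\varepsilon\|_\bM$ and $\varepsilon^{1/\rq}\|\bdiv\bsi_\varepsilon\|_{\bL^\rq}$. The crucial observation---and the step I expect to be the main obstacle---is that the regularization does \emph{not} bound $\bdiv\bsi_\varepsilon$ uniformly, since it only yields $\|\bdiv\bsi_\varepsilon\|_{\bL^\rq}\lesssim\varepsilon^{-1/\rq}$. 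Instead the uniform bound must come from the second equation: since $(\bdiv\bsi_\varepsilon,\bv)_\Omega=[(\cE+\cC)\bu_\varepsilon-\wh{\bG},\bv]$ for all $\bv\in\bM$, the duality characterization of the $\bL^\rq$-norm together with the boundedness of $\cE,\cC$ and the already-established bound on $\|\bu_\varepsilon\|_\bM$ yields $\|\bdiv\bsi_\varepsilon\|_{\bL^\rq}\leq C$ independently of $\varepsilon$; the deviatoric inequality \eqref{eq:deviatoric-inequality} then controls $\|\bsi_\varepsilon\|_{0,\Omega}$, hence $\|\bsi_\varepsilon\|_\bbX$. In particular $\|\varepsilon\,\cR_\bsi(\bsi_\varepsilon)\|_{\bbX'}\leq\varepsilon\,\|\bdiv\bsi_\varepsilon\|^{\rq/\rp}_{\bL^\rq}\to 0$.

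Finally, by reflexivity I would extract a subsequence with $\bsi_\varepsilon\rightharpoonup\bsi$ in $\bbX_0$ and $\bu_\varepsilon\rightharpoonup\bu$ in $\bM$. The linear contributions pass to the limit directly and the regularization vanishes, so the only difficulty is the nonlinear Forchheimer term inside $\cC$. To handle it I would invoke Minty's trick on the full, unregularized operator $\cT$: using monotonicity of $\cT_\varepsilon$ and the identity $\cT_\varepsilon(\bsi_\varepsilon,\bu_\varepsilon)=(\0,\wh{\bG})$, one has
\begin{equation*}
[(\0,\wh{\bG}),(\bsi_\varepsilon,\bu_\varepsilon)-(\btau,\bv)] \,\geq\, [\cT_\varepsilon(\btau,\bv),(\bsi_\varepsilon,\bu_\varepsilon)-(\btau,\bv)] \quad\forall\,(\btau,\bv)\in\bbX_0\times\bM,
\end{equation*}
and passing to the limit (the right-hand side being a strong-weak pairing, since $\cT_\varepsilon(\btau,\bv)\to\cT(\btau,\bv)$ in the dual) gives $[(\0,\wh{\bG})-\cT(\btau,\bv),(\bsi,\bu)-(\btau,\bv)]\geq 0$ for all $(\btau,\bv)$. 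Taking $(\btau,\bv)=(\bsi,\bu)-\lambda(\bzeta,\bw)$, dividing by $\lambda>0$, and letting $\lambda\to 0^+$ with the hemicontinuity of $\cT$ yields $\cT(\bsi,\bu)=(\0,\wh{\bG})$; that is, $(\bsi,\bu)$ solves the resolvent system \eqref{eq:resolvent-system}.
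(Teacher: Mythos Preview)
Your proposal is correct and follows essentially the same route as the paper: the same regularization by $\varepsilon\,\cR_\bsi$, the same Browder--Minty argument for the regularized system, and the same crucial step of extracting the uniform $\bL^\rq$-bound on $\bdiv\bsi_\varepsilon$ from the second equation rather than from the regularization. The only variation is in the passage to the limit for the nonlinear term: the paper extracts a weak limit $\cC(\bu_\varepsilon)\rightharpoonup\bzeta$ and identifies $\bzeta=\cC(\bu)$ via the type~M property of $\cC$ and a $\limsup$ inequality, whereas you run Minty's trick directly on the full operator $\cT$---an equivalent and slightly more streamlined device.
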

\begin{proof}
We proceed similarly to \cite[Lemma~4.6]{aeny2019} (see also \cite{s2010}).
For $\epsilon>0$, consider a regularization of \eqref{eq:resolvent-system} defined by: Given $\wh{\bG}\in \bM'_2\subset \bM'$, determine $(\bsi_\epsilon, \bu_\epsilon)\in \bbX_0\times \bM$ satisfying
\begin{equation}\label{eq:regularized-problem}
\begin{array}{rlll}
\ds (\epsilon\,\cR_\bsi + \cA)\,\bsi_\epsilon + \cB'\bu_\epsilon & = & \0 & \mbox{in }\, \bbX'_0, \\ [2ex]
\ds -\,\cB\,\bsi_\epsilon + (\cE + \cC)\,\bu_\epsilon & = & \wh{\bG} & \mbox{in }\, \bM'.
\end{array}
\end{equation}
Introduce the operator $\cJ : \bbX_0\times \bM \to (\bbX_0\times \bM)'$ defined as
\begin{equation*}
\cJ\left(\begin{array}{c}
\btau \\ \bv
\end{array}\right)
\,:=\, \left(\begin{array}{cc}
\epsilon\,\cR_\bsi + \cA & \cB' \\ -\,\cB & \cE + \cC
\end{array}\right)
\left(\begin{array}{c}
\btau \\ \bv
\end{array}\right).
\end{equation*}
Note that
\begin{equation*}
\left[\cJ\left(\begin{array}{c}
\btau_1 \\ \bv_1
\end{array}\right),
\left(\begin{array}{c}
\btau_2 \\ \bv_2
\end{array}\right)\right]
\,=\, [(\epsilon\,\cR_\bsi + \cA)(\btau_1),\btau_2] + [\cB(\btau_2),\bv_1] - [\cB(\btau_1),\bv_2] + [(\cE + \cC)(\bv_1),\bv_2],
\end{equation*}
and
\begin{equation*}
\begin{array}{l}
\ds \left[\cJ\left(\begin{array}{c}
\btau_1 \\ \bv_1
\end{array}\right) - 
\cJ\left(\begin{array}{c}
\btau_2 \\ \bv_2
\end{array}\right),
\left(\begin{array}{c}
\btau_1 \\ \bv_1
\end{array}\right) - 
\left(\begin{array}{c}
\btau_2 \\ \bv_2
\end{array}\right)\right] \\ [3ex]
\ds\quad\,=\, [(\epsilon\,\cR_\bsi + \cA)(\btau_1) - (\epsilon\,\cR_\bsi + \cA)(\btau_2),\btau_1 - \btau_2] + [(\cE + \cC)(\bv_1) - (\cE + \cC)(\bv_2),\bv_1 - \bv_2].
\end{array}
\end{equation*}
From Lemmas~\ref{lem:properties-operators-A-C} and \ref{lem:R-sigma-properties}, we have that $\cJ$ is a bounded, continuous, and monotone operator. Moreover, using the second bounds in \eqref{eq:properties-operator-A}, \eqref{eq:properties-operator-C-1} and \eqref{eq:R-bsi-bounded-coercive}, and inequality \eqref{eq:deviatoric-inequality}, we also have
\begin{equation}\label{eq:cJ-coercivity}
\begin{array}{l}
\ds \left[\cJ\left(\begin{array}{c}
\btau \\ \bv
\end{array}\right),
\left(\begin{array}{c}
\btau \\ \bv
\end{array}\right)\right]
\,=\, [(\epsilon\,\cR_\bsi + \cA)(\btau),\btau] + [(\cE + \cC)(\bv),\bv] \\ [3ex]
\ds\quad\,\geq\, \frac{1}{\nu}\,\|\btau^\rd\|^2_{0,\Omega} + \epsilon\,\|\bdiv\btau\|^{\rq}_{\bL^{\rq}(\Omega)} + \tF\,\|\bv\|^{\rp}_{\bM} + (1+\alpha)\,\|\bv\|^2_{\bM_2} \\ [2ex]
\ds\quad\,\geq\, C_{\rq}\,\min\Big\{ \frac{1}{\nu}\,\|\btau^\rd\|^{2-\rq}_{0,\Omega}, \epsilon \Big\}\,\|\btau\|^{\rq}_{\bbX} + \tF\,\|\bv\|^{\rp}_{\bM},
\end{array}
\end{equation}
where $C_{\rq} := \min\Big\{ 1, C^{\rq/2}_\rd \Big\}/2$.
It follows from \eqref{eq:cJ-coercivity} that $\cJ$ is coercive in $\bbX_0\times \bM$.
Thus, a straightforward application of the Browder--Minty theorem \cite[Theorem~10.49]{Renardy-Rogers} establishes the existence of a solution $(\bsi_\epsilon, \bu_\epsilon)\in \bbX_0\times \bM$ of \eqref{eq:regularized-problem}.

On the other hand, from the first inequality in \eqref{eq:cJ-coercivity} and \eqref{eq:regularized-problem}, we have
\begin{equation*}
\begin{array}{l}
\|\bsi^\rd_\epsilon\|^2_{0,\Omega} + \epsilon\,\|\bdiv\bsi_\epsilon\|^{\rq}_{\bL^\rq(\Omega)} + \|\bu_\epsilon\|^{\rp}_{\bM} + \|\bu_\epsilon\|^2_{\bM_2} 
\,\leq\, C\,\|\wh{\f}\|_{\bM'_2}\,\|\bu_\epsilon\|_{\bM_2}.
\end{array}
\end{equation*}
Then, employing Young's inequality with $\rp = \rq = 2$ and $\delta = 1$ (cf. \eqref{eq:Young-inequality}), we deduce that
\begin{equation}\label{eq:bound-independently-eps-2}
\begin{array}{l}
\|\bsi^\rd_\epsilon\|^2_{0,\Omega} + \epsilon\,\|\bdiv\bsi_\epsilon\|^{\rq}_{\bL^{\rq}(\Omega)} + \|\bu_\epsilon\|^{\rp}_{\bM} + \|\bu_\epsilon\|^2_{\bM_2} 
\,\leq\, C\,\|\wh{\f}\|^{2}_{\bM'_2},
\end{array}
\end{equation}
which implies that $\|\bsi^\rd_\epsilon\|_{0,\Omega}$ and $\|\bu_\epsilon\|_\bM$ are bounded independently of $\epsilon$.
In turn, recalling that $\bdiv(\bbX_0) = \bM'$, and employing the second equation in \eqref{eq:regularized-problem} and \eqref{eq:bound-independently-eps-2}, we get
\begin{equation*}
\|\bdiv\bsi_\epsilon\|_{\bL^{\rq}(\Omega)} 
\,\leq\, C\,\Big( \|\wh{\f}\|_{\bM'_2} + \|\bu_\epsilon\|^{\rp-1}_\bM + \|\bu_\epsilon\|_{\bM_2} \Big)
\,\leq\, C\,\Big( \|\wh{\f}\|_{\bM'_2} + \|\wh{\f}\|^{2/\rq}_{\bM'_2} \Big),
\end{equation*}
which combined with \eqref{eq:bound-independently-eps-2} and \eqref{eq:deviatoric-inequality} implies that $\|\bsi_\epsilon\|_{\bbX}$ is also bounded independently of $\epsilon$.

Since $\bbX_0$ and $\bM$ are reflexive Banach spaces, as $\epsilon\to 0$ we can extract weakly convergent subsequences $\big\{ \bsi_{\epsilon,n} \big\}^\infty_{n=1}, \big\{ \bu_{\epsilon,n} \big\}^\infty_{n=1}$ and $\big\{ \cC(\bu_{\epsilon,n}) \big\}^\infty_{n=1}$, such that $\bsi_{\epsilon,n} \rightharpoonup \bsi$ in $\bbX_0$, $\bu_{\epsilon,n}\rightharpoonup \bu$ in $\bM$, $\cC(\bu_{\epsilon,n})\rightharpoonup \bzeta$ in $\bM'$, and
\begin{equation*}
\begin{array}{rlll}
\ds \cA\,\bsi + \cB'\bu & = & \0 & \mbox{in }\, \bbX'_0, \\ [2ex]
\ds -\,\cB\,\bsi + \cE\,\bu + \bzeta & = & \wh{\bG} & \mbox{in }\, \bM'.
\end{array}
\end{equation*}
Moreover, from \eqref{eq:regularized-problem} we find that
\begin{equation}\label{eq:lim-sup-inequality}
\begin{array}{l}
\ds \lim\sup_{\epsilon\to 0} \,[\cC(\bu_\epsilon),\bu_\epsilon]
\,=\, \lim\sup_{\epsilon\to 0} \,\Big( -[(\epsilon\,\cR_\bsi + \cA)(\bsi_\epsilon),\bsi_\epsilon] + [(\wh{\bG} - \cE(\bu_\epsilon)),\bu_\epsilon] \Big) \\ [2ex]
\ds\quad \leq\, -[\cA(\bsi),\bsi] + [\wh{\bG},\bu] - [\cE(\bu),\bu] \,=\, [\bzeta,\bu].
\end{array}
\end{equation}
Since $\cC$ is monotone and continuous so then is a {\it type M} operator (cf. \cite[Lemma~II.2.1]{Showalter}), which together with \eqref{eq:lim-sup-inequality} yields $\cC(\bu) = \bzeta$. 
Hence, $\bsi$ and $\bu$ solve \eqref{eq:resolvent-system} concluding the proof.
\end{proof}

We end this section establishing a suitable initial condition result, which is necessary to apply Theorem~\ref{thm:well-posed-parabolic-problem} to our context.
\begin{lem}\label{lem:sol0-X-M}
Assume the initial condition $\bu_0\in \bM\cap \bH$, where
\begin{equation}\label{eq:initial-condition-u0}
\bH \,:=\, \Big\{ \bv\in \bH^1_0(\Omega) :\quad \Delta\bv\in \bL^{2}(\Omega) \qan \div\bv = 0 \qin \Omega \Big\}.
\end{equation}	
Then, there exists $\bsi_0\in \bbX_0$ such that
\begin{equation}\label{eq:sol0-X-M}
\left(\begin{array}{cc}
\cA & \cB' \\ -\cB & \cC
\end{array}\right)
\left(\begin{array}{c}
\bsi_0 \\ \bu_0
\end{array}\right)\in 
\left(\begin{array}{c}
\0 \\ \bM'_2
\end{array}\right).
\end{equation}
\end{lem}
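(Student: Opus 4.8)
The plan is to build $\bsi_0$ explicitly from $\bu_0$, taking as a guide the first strong-form relation $\frac{1}{\nu}\bsi^\rd = \nabla\bu$ in \eqref{eq:Brinkman-Forchheimer-2}. Since $\div\bu_0 = 0$ forces $\tr(\nabla\bu_0) = 0$, the natural candidate $\bsi_0 := \nu\,\nabla\bu_0$ already has zero-mean trace, $(\tr(\bsi_0),1)_\Omega = \nu\,(\div\bu_0,1)_\Omega = 0$, so no correction by a multiple of $\bI$ is needed and $\bsi_0$ is a candidate element of $\bbX_0$ once its integrability is checked. I would then verify the two conditions in \eqref{eq:sol0-X-M} directly for this choice.

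First I would confirm membership $\bsi_0\in\bbX_0$. As $\bu_0\in\bH^1_0(\Omega)$ we have $\nabla\bu_0\in\bbL^2(\Omega)$, hence $\bsi_0\in\bbL^2(\Omega)$; acting row-wise, $\bdiv\bsi_0 = \nu\,\Delta\bu_0$, and since $\Delta\bu_0\in\bL^2(\Omega)$ by hypothesis while $\Omega$ is bounded and $\rq\in(1,2)$, the embedding $\bL^2(\Omega)\hookrightarrow\bL^\rq(\Omega)$ gives $\bdiv\bsi_0\in\bL^\rq(\Omega)$. Together with the zero-mean-trace property this yields $\bsi_0\in\bbX_0$.

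Next I would establish the elliptic equation $\cA\,\bsi_0 + \cB'\bu_0 = \0$ in $\bbX_0'$. Because $\tr(\nabla\bu_0) = 0$ we have $\bsi_0^\rd = \nu\,\nabla\bu_0$, so that $[\cA\,\bsi_0,\btau] = (\nabla\bu_0,\btau^\rd)_\Omega = (\nabla\bu_0,\btau)_\Omega$ for all $\btau\in\bbX_0$, using $\nabla\bu_0:\btau^\rd = \nabla\bu_0:\btau$ (again by $\tr(\nabla\bu_0)=0$). The remaining contribution is $[\cB'\bu_0,\btau] = [\cB\,\btau,\bu_0] = (\bdiv\btau,\bu_0)_\Omega$. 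Summing, I must show $(\nabla\bu_0,\btau)_\Omega + (\bu_0,\bdiv\btau)_\Omega = 0$. For $\bphi\in C^\infty_c(\Omega)$ this is the elementary integration-by-parts identity with no boundary term; approximating $\bu_0$ by such $\bphi_k$ in $\bH^1_0(\Omega)$ and passing to the limit closes the identity, the two limits being justified by $\nabla\bphi_k\to\nabla\bu_0$ in $\bbL^2(\Omega)$ (paired with $\btau\in\bbL^2(\Omega)$) and by $\bphi_k\to\bu_0$ in $\bL^\rp(\Omega)$ via $\bH^1(\Omega)\hookrightarrow\bL^\rp(\Omega)$ (paired with $\bdiv\btau\in\bL^\rq(\Omega)$).

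Finally I would verify the range condition $-\cB\,\bsi_0 + \cC\,\bu_0\in\bM_2' = \bL^2(\Omega)$. The divergence term is represented by $\bdiv\bsi_0 = \nu\,\Delta\bu_0\in\bL^2(\Omega)$; the Darcy part $\alpha\,\bu_0$ lies in $\bL^2(\Omega)$ since $\bu_0\in\bH^1_0(\Omega)\subset\bL^2(\Omega)$; and the Forchheimer part requires $|\bu_0|^{\rp-2}\bu_0\in\bL^2(\Omega)$, i.e. $\bu_0\in\bL^{2(\rp-1)}(\Omega)$. This last embedding is the one genuine obstacle and the point where the standing restriction $\rp\in[3,4]$ is essential: for $n=2$ one has $\bH^1(\Omega)\hookrightarrow\bL^s(\Omega)$ for every finite $s$, whereas for $n=3$ only $\bH^1(\Omega)\hookrightarrow\bL^6(\Omega)$ holds, so $2(\rp-1)\le 6$ is needed, which is exactly guaranteed by $\rp\le 4$ (the endpoint $\rp=4$ landing precisely on the critical exponent $6$). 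Hence $\cC\,\bu_0\in\bL^2(\Omega)$, both summands of the second row lie in $\bM_2'$, and \eqref{eq:sol0-X-M} follows.
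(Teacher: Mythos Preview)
Your proof is correct and follows essentially the same approach as the paper: define $\bsi_0 := \nu\,\nabla\bu_0$, verify $\bsi_0\in\bbX_0$ using $\div\bu_0=0$ and $\Delta\bu_0\in\bL^2(\Omega)$, check $\cA\,\bsi_0+\cB'\bu_0=\0$ via integration by parts, and confirm $-\cB\,\bsi_0+\cC\,\bu_0\in\bM'_2$ using the Sobolev embedding $\bH^1(\Omega)\hookrightarrow\bL^{2(\rp-1)}(\Omega)$ valid for $\rp\le 4$. Your treatment is in fact slightly more explicit than the paper's, notably in justifying the integration-by-parts identity for $\btau\in\bbH(\bdiv_\rq;\Omega)$ via density and in spelling out why the restriction $\rp\le 4$ is exactly what is needed in dimension $n=3$.
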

\begin{proof}
Given $\bu_0\in \bM\cap \bH$ and define $\bsi_0 := \nu\,\nabla\bu_0$, it follows that
\begin{equation}\label{eq:sigma0-properties}
\frac{1}{\nu}\,\bsi^\rd_0 \,=\, \nabla\bu_0,\quad
\bdiv\bsi_0 \,=\, \nu\,\Delta\bu_0, \qan \tr(\bsi_0) \,=\, 0 \qin \Omega.
\end{equation}
Notice that $\bsi_0\in \bbH_0(\bdiv;\Omega)\subset \bbX_0$, with $\bbH_0(\bdiv;\Omega) := \bbH(\bdiv;\Omega)\cap \bbX_0$.
Next, integrating by parts the first equation in \eqref{eq:sigma0-properties} and proceeding similarly to \eqref{eq:vf-1}, we obtain
\begin{equation*}
[\cA(\bsi_0),\btau] + [\cB(\btau),\bu_0] \,=\, 0 \quad \forall\,\btau\in \bbX_0.
\end{equation*}
Hence, given $\bu_0\in \bM\cap\bH$ (cf. \eqref{eq:initial-condition-u0}), taking $\bsi_0\in \bbX_0$ satisfying \eqref{eq:sigma0-properties}, and after minor algebraic manipulation we deduce that 
\begin{equation}\label{eq:initial-condition-problem}
\left(\begin{array}{cc}
\cA & \cB' \\ -\cB & \cC
\end{array}\right)
\left(\begin{array}{c}
\bsi_0 \\ \bu_0
\end{array}\right)
\,=\, \left(\begin{array}{c}
\0 \\ \bG_0
\end{array}\right),
\end{equation}
where
\begin{equation*}
[\bG_0,\bv] \,:=\, -\nu\,(\Delta\bu_0,\bv)_{\Omega} + [\cC(\bu_0),\bv],
\end{equation*}
which together with the additional regularity of $\bu_0$, and the continuous injection of $\bH^1(\Omega)$ into $\bL^{2(\rp-1)}$, with $\rp\in [3,4]$, implies that 
\begin{equation*}
\begin{array}{l}
\ds\big| [\bG_0,\bv] \big| \,\leq\, \Big( \nu\,\|\Delta\bu_0\|_{0,\Omega} + \alpha\,\|\bu_0\|_{0,\Omega} + \tF\,\|\bu_0\|^{\rp-1}_{\bL^{2(\rp-1)}(\Omega)} \Big)\,\|\bv\|_{\bM_2} \\ [2ex]
\ds\quad\,\leq\, C\,\Big( \|\Delta\bu_0\|_{0,\Omega} + \|\bu_0\|_{0,\Omega} + \|\bu_0\|^{\rp-1}_{1,\Omega} \Big)\,\|\bv\|_{\bM_2}.
\end{array}
\end{equation*}
Thus, $\bG_0\in \bM'_2$ so then \eqref{eq:sol0-X-M} holds, completing the proof.
\end{proof}

\begin{rem}
The assumption on the initial condition $\bu_0$ in
\eqref{eq:initial-condition-u0} is not necessary for all the results
that follow but we shall assume it from
now on for simplicity.  A similar assumption to $\bu_0$ is
also made in \cite[eq. (2.2)]{dr2014}.  Note also that
$(\bsi_0,\bu_0)$ satisfying \eqref{eq:sol0-X-M} is not unique.
\end{rem}


\subsection{The main result}

Now, we establish the main result of this section.
We begin by recalling the definition of the operators $\cA, \cB, \cC$, and $\cE$ (cf. \eqref{eq:operators-A-B-C}, \eqref{eq:operators-E}), and defining the operators
\begin{equation*}
\cN \,:=\, \left(\begin{array}{cc}
\0 & \0 \\ \0 & \cE
\end{array}\right),\quad
\cM \,:=\, \left(\begin{array}{cc}
\cA & \cB' \\ -\cB & \cC
\end{array}\right),
\end{equation*}
and the spaces,
\begin{equation*}
\bE'_b \,:=\, \Big\{ (\0,\bG)\in \bbX'_0\times \bM' :\quad \bG\in \bM'_2 \Big\},\quad
\cD \,:=\, \Big\{ (\bsi,\bu)\in \bbX_0\times \bM :\quad \cM\left(\bsi, \bu\right)\in \bE'_b \Big\},
\end{equation*}
where $\bM_2 = \bL^2(\Omega)$ and $\bM'_2 = \bL^2(\Omega)$.
Then, we can state the well-posedness of problem \eqref{eq:weak-Brinkman-Forchheimer}.
\begin{thm}\label{thm:well-posed-result}
  For each $\f\in \W^{1,1}(0,T;\bL^2(\Omega))$ and $\bu_0\in \bM\cap \bH$ (cf. \eqref{eq:initial-condition-u0}), there exists a unique solution to \eqref{eq:weak-Brinkman-Forchheimer} with
  $(\bsi, \bu)\in \L^\infty(0,T;\bbX_0)\times \W^{1,\infty}(0,T;\bL^2(\Omega))\cap \L^\infty(0,T;\bM)$ and $\bu(0) = \bu_0$.
\end{thm}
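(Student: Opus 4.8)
The plan is to recast \eqref{eq:weak-Brinkman-Forchheimer} as the abstract Cauchy problem $\frac{d}{dt}(\cN u(t)) + \cM(u(t)) \ni (\0,\bG(t))$ on $E := \bbX_0\times\bM$, with $u = (\bsi,\bu)$, and to invoke Theorem~\ref{thm:well-posed-parabolic-problem}. First I would check that $\cN$ is linear, symmetric and monotone, which is immediate from the properties of $\cE$ recorded before Lemma~\ref{lem:properties-operators-A-C}; this also identifies the seminorm $|(\bsi,\bu)|_b = \|\bu\|_{0,\Omega}$ and its associated dual $\bE'_b = \{(\0,\bG):\bG\in\bM'_2\}$. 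Next I would verify that $\cM$ is monotone: writing $\delta\btau := \btau_1-\btau_2$ and $\delta\bv := \bv_1-\bv_2$, the skew coupling cancels since $[\cB'\delta\bv,\delta\btau] = [\cB\delta\btau,\delta\bv]$, leaving
\[
[\cM(\btau_1,\bv_1)-\cM(\btau_2,\bv_2),(\delta\btau,\delta\bv)] = [\cA\delta\btau,\delta\btau] + [\cC(\bv_1)-\cC(\bv_2),\delta\bv] \ge 0,
\]
by the monotonicity of $\cA$ and $\cC$ from Lemma~\ref{lem:properties-operators-A-C}.

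The decisive hypothesis is the range condition $Rg(\cN+\cM) = \bE'_b$. Since $\cN+\cM = \left(\begin{smallmatrix}\cA & \cB' \\ -\cB & \cE+\cC\end{smallmatrix}\right)$, solving $(\cN+\cM)(\bsi,\bu) = (\0,\wh{\bG})$ for $\wh{\bG}\in\bM'_2$ is exactly the resolvent system \eqref{eq:resolvent-system}, whose solvability is guaranteed by Lemma~\ref{lem:resolvent-system}; moreover $\cM(\bsi,\bu) = (\0,\wh{\bG}-\cE\bu)\in\bE'_b$, so the solution lies in $\cD$. For the data I would use that $\bu_0\in\bM\cap\bH$ places $(\bsi_0,\bu_0)\in\cD$ by Lemma~\ref{lem:sol0-X-M}, and that $\f\in\W^{1,1}(0,T;\bL^2(\Omega))$ gives $(\0,\bG)\in\W^{1,1}(0,T;\bE'_b)$. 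Theorem~\ref{thm:well-posed-parabolic-problem} then yields a solution with $\cN u\in\W^{1,\infty}(0,T;\bE'_b)$ and $\cN u(0)=\cN u_0$, i.e.\ $\bu\in\W^{1,\infty}(0,T;\bL^2(\Omega))$ and $\bu(0)=\bu_0$, together with $u(t)\in\cD$ for every $t$.

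The step I expect to be the main obstacle is upgrading this to the claimed $\L^\infty(0,T;\bbX_0)\times\L^\infty(0,T;\bM)$ regularity, since Theorem~\ref{thm:well-posed-parabolic-problem} only provides pointwise membership $u(t)\in\cD$ without uniform-in-time norm control. To close this I would derive an energy estimate: at a.e.\ fixed $t$ I test the first equation of \eqref{eq:weak-Brinkman-Forchheimer} with $\bsi(t)$ and the second with $\bu(t)$ and add; the terms $\pm[\cB\bsi,\bu]$ cancel, leaving, after the coercivity bounds in \eqref{eq:properties-operator-A} and \eqref{eq:properties-operator-C-1},
\[
\tfrac{1}{\nu}\|\bsi^\rd(t)\|^2_{0,\Omega} + \alpha\|\bu(t)\|^2_{0,\Omega} + \tF\|\bu(t)\|^\rp_{\bM} \le \big(\|\f(t)\|_{0,\Omega} + \|\partial_t\bu(t)\|_{0,\Omega}\big)\,\|\bu(t)\|_{0,\Omega}.
\]
Since $\partial_t\bu\in\L^\infty(0,T;\bL^2(\Omega))$ and $\f\in\W^{1,1}(0,T;\bL^2(\Omega))\hookrightarrow C([0,T];\bL^2(\Omega))$, Young's inequality \eqref{eq:Young-inequality} bounds $\|\bsi^\rd(t)\|_{0,\Omega}$ and $\|\bu(t)\|_{\bM}$ uniformly in $t$; controlling $\|\bdiv\bsi(t)\|_{\bL^\rq(\Omega)}$ from the second equation and then invoking the deviatoric inequality \eqref{eq:deviatoric-inequality} (with the inf-sup property of Lemma~\ref{lem:inf-sup-operator-B} available if needed) delivers the $\bbX_0$-bound, whence $\bsi\in\L^\infty(0,T;\bbX_0)$ and $\bu\in\L^\infty(0,T;\bM)$.

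Finally I would establish uniqueness directly. Given two solutions, their difference $(\bsi,\bu)$ satisfies the homogeneous system with $\bu(0)=\0$; testing as above and using the strong monotonicity \eqref{eq:properties-operator-C-3} of $\cC$ gives $\tfrac{1}{2}\tfrac{d}{dt}\|\bu(t)\|^2_{0,\Omega}\le 0$, so $\bu\equiv\0$. Then $\cA\bsi=-\cB'\bu=\0$ forces $\bsi^\rd=\0$, while the second equation forces $\bdiv\bsi=\0$, and \eqref{eq:deviatoric-inequality} yields $\bsi\equiv\0$, completing the argument.
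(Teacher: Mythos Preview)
Your proposal is correct and follows essentially the same route as the paper: verify the hypotheses of Theorem~\ref{thm:well-posed-parabolic-problem} via Lemmas~\ref{lem:properties-operators-A-C}, \ref{lem:resolvent-system} and \ref{lem:sol0-X-M}, then derive the $\L^\infty$-in-time regularity by testing with $(\bsi,\bu)$, and finally argue uniqueness by testing the difference equation and invoking \eqref{eq:properties-operator-A}, \eqref{eq:properties-operator-C-3} and \eqref{eq:deviatoric-inequality}. Your treatment of the $\L^\infty(0,T;\bbX_0)\times\L^\infty(0,T;\bM)$ step is in fact more explicit than the paper's one-line remark, but the underlying argument is the same.
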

\begin{proof}
We begin by noting that $\cN$ is linear, symmetric and monotone.
In addition, since both $\cA$ and $\cC$ are monotone, then $\cM$ is monotone.
On the other hand, from Lemma~\ref{lem:resolvent-system} we know that for some $(\0,\wh{\bG})\in \bE'_b$ there is a $(\bsi,\bu)\in \bbX_0\times \bM$ such that $(\0,\wh{\bG}) = (\cN + \cM)(\bsi,\bu)$ which implies $Rg(\cN + \cM) = \bE'_b$.
Finally, considering $\bu_0\in \bM\cap \bH$ (cf. \eqref{eq:initial-condition-u0}), from a straightforward application of Lemma~\ref{lem:sol0-X-M} we are able to find $\bsi_0\in \bbX_0$ such that $(\bsi_0, \bu_0)\in \cD$.
Therefore, applying Theorem~\ref{thm:well-posed-parabolic-problem} to our context, we conclude the existence of a solution $(\bsi,\bu)$ to \eqref{eq:weak-Brinkman-Forchheimer}, with
$\bu\in \W^{1,\infty}(0,T;\bL^2(\Omega))$ and $\bu(0) = \bu_0$. 
In turn, employing \eqref{eq:weak-Brinkman-Forchheimer} with $(\btau, \bv) = (\bsi,\bu)$, we deduce that $\bsi\in \L^\infty(0,T;\bbX_0)$ and $\bu\in \L^\infty(0,T;\bM)$.

Now, assume that the solution of \eqref{eq:weak-Brinkman-Forchheimer} is not unique.
Let $(\bsi^i,\bu^i)$, with $i\in \{1,2\}$, be two solutions corresponding to the same data.
Then, taking \eqref{eq:weak-Brinkman-Forchheimer} with $(\btau,\bv) = (\bsi^1 - \bsi^2, \bu^1 - \bu^2)\in \bbX_0\times \bM$, we deduce that
\begin{equation*}
\frac{1}{2}\,\partial_t\,\|\bu^1 - \bu^2\|^2_{0,\Omega} + [\cA(\bsi^1 - \bsi^2),\bsi^1 - \bsi^2] + [\cC(\bu^1) - \cC(\bu^2),\bu^1 - \bu^2] \,=\, 0,
\end{equation*}
which together with the monotonicity bounds \eqref{eq:properties-operator-A} and \eqref{eq:properties-operator-C-3}, implies
\begin{equation*}
\frac{1}{2}\,\partial_t\,\|\bu^1 - \bu^2\|^2_{0,\Omega} + \frac{1}{\nu}\,\|(\bsi^1 - \bsi^2)^\rd\|^2_{0,\Omega} + \tF\,C_{\rp}\,\|\bu^1 - \bu^2\|^{\rp}_{\bM} \,\leq\, 0.
\end{equation*}
Integrating in time from $0$ to $t\in (0,T]$, and using $\bu^1(0) = \bu^2(0)$, we obtain
\begin{equation}\label{eq:uniquiness-inequality-1}
\|\bu^1(t) - \bu^2(t)\|^2_{0,\Omega} + C\,\int^t_0 \left(\|(\bsi^1 - \bsi^2)^\rd\|^2_{0,\Omega} + \|\bu^1 - \bu^2\|^{\rp}_{\bM}\right)\,ds \,\leq\, 0.
\end{equation}
Therefore, it follows from \eqref{eq:uniquiness-inequality-1} that $(\bsi^1(t))^\rd = (\bsi^2(t))^\rd$ and $\bu^1(t) = \bu^2(t)$ for all $t\in (0,T]$.
Next, from the second row of \eqref{eq:weak-Brinkman-Forchheimer}, we get 
\begin{equation*}
[\cB(\bsi^1 - \bsi^2),\bv] \,=\, (\partial_t\,(\bu^1 - \bu^2),\bv)_{\Omega} + [\cC(\bu^1) - \cC(\bu^2),\bv] 
\,=\, 0 \quad \forall\,\bv\in \bM,
\end{equation*}
which together with the property $\bdiv(\bbX_0) = \bM'$ allow us to deduce that
$\bdiv\bsi^1(t) = \bdiv\bsi^2(t)$ for all $t\in (0,T]$. Employing the inequality \eqref{eq:deviatoric-inequality}, we conclude that $\bsi^1(t) = \bsi^2(t)$ for all $t\in (0,T]$,
and therefore \eqref{eq:weak-Brinkman-Forchheimer} has a unique solution.
\end{proof}

We conclude this section with stability bounds for the solution of \eqref{eq:weak-Brinkman-Forchheimer}.
\begin{thm}\label{thm:stability-bound}
For the solution of \eqref{eq:weak-Brinkman-Forchheimer}, assuming sufficient regularity of the data, there exists a positive constant $C_{\cF}$ only depending on $|\Omega|, \nu, \alpha, \tF, \beta$, such that
\begin{equation}\label{eq:stability-bound}
\|\bsi\|_{\L^{2}(0,T;\bbX)} + \|\bu\|_{\L^{2}(0,T;\bM)} + \|\bu\|_{\L^\infty(0,T;\bL^2(\Omega))} 
\,\leq\, C_{\cF}\,\cF(\f,\bsi(0),\bu(0))
\end{equation}
with
\begin{equation}\label{eq:cF-definition}
\begin{array}{l}
\ds \cF(\f,\bsi(0),\bu(0)) 
\,:=\, \|\f\|^{\rp-1}_{\L^{2(\rp-1)}(0,T;\bL^2(\Omega))}
  + \|\f\|_{\L^2(0,T;\bL^2(\Omega))} \\ [2ex] 
  \ds\quad\,+\,\,\|\bsi^\rd(0)\|_{0,\Omega} + \|\bu(0)\|^{\rp/2}_{\bM} + \|\bu(0)\|^{\rp-1}_{0,\Omega}
  + \|\bu(0)\|_{0,\Omega}.
\end{array}
\end{equation}
Moreover, there exists a positive constant $C_{\cG}$ defined below in \eqref{eq:constant-CG-definition}, such that
\begin{equation}\label{eq:stability-bound-u-Linf-Lp}
\|\bu\|_{\L^{\infty}(0,T;\bM)} 
\,\leq\, C_{\cG}\,\cG(\f,\bsi(0),\bu(0))
\end{equation}
with
\begin{equation}\label{eq:cG-definition}
\cG(\f,\bsi(0),\bu(0)) 
\,:=\, \|\f\|^{2/\rp}_{\L^2(0,T;\bL^2(\Omega))} + \|\bsi^\rd(0)\|^{2/\rp}_{0,\Omega} + \|\bu(0)\|_{\bM} + \|\bu(0)\|^{2/\rp}_{0,\Omega},
\end{equation}
where $\bsi(0)$ is such that \eqref{eq:sol0-X-M} holds.
\end{thm}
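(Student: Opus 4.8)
The plan is to establish both \eqref{eq:stability-bound} and \eqref{eq:stability-bound-u-Linf-Lp} from two energy identities obtained by testing \eqref{eq:weak-Brinkman-Forchheimer} with the solution and with its time derivative, supplemented by the inf-sup bound of Lemma~\ref{lem:inf-sup-operator-B}. Since the argument manipulates $\partial_t\bu$ and $\partial_t\bsi$, I would carry it out formally (as the ``sufficient regularity'' hypothesis invites), the rigorous justification being passage to the limit in the semidiscrete scheme. First I derive the basic energy estimate: testing the first row with $\btau=\bsi(t)$ and the second row with $\bv=\bu(t)$ and adding, the coupling terms $\pm[\cB(\bsi),\bu]$ cancel and, using $[\cE(\partial_t\bu),\bu]=\tfrac12\partial_t\|\bu\|^2_{0,\Omega}$, I obtain
$$\tfrac{1}{2}\,\partial_t\|\bu\|^2_{0,\Omega} + [\cA(\bsi),\bsi] + [\cC(\bu),\bu] = (\f,\bu)_\Omega .$$
Invoking the coercivity bounds \eqref{eq:properties-operator-A} and \eqref{eq:properties-operator-C-1}, bounding $(\f,\bu)_\Omega\le\|\f\|_{0,\Omega}\|\bu\|_{0,\Omega}$ and absorbing by Young's inequality \eqref{eq:Young-inequality} into $\alpha\|\bu\|^2_{0,\Omega}$, and integrating in time, I control $\|\bu\|_{\L^\infty(0,T;\bL^2(\Omega))}$, $\int_0^T\|\bsi^\rd\|^2_{0,\Omega}$ and $\int_0^T\|\bu\|^\rp_\bM$ by $\|\f\|_{\L^2(0,T;\bL^2(\Omega))}$ and $\|\bu(0)\|_{0,\Omega}$. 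The $\L^2(0,T;\bM)$-bound for $\bu$ then comes from Lemma~\ref{lem:inf-sup-operator-B}: the first row gives $[\cB(\btau),\bu]=-[\cA(\bsi),\btau]$, so \eqref{eq:inf-sup-operator-B} yields $\beta\|\bu\|_\bM\le\tfrac1\nu\|\bsi^\rd\|_{0,\Omega}$ pointwise in $t$, reducing $\|\bu\|_{\L^2(0,T;\bM)}$ to the already-controlled $\int_0^T\|\bsi^\rd\|^2_{0,\Omega}$.

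Next I prove \eqref{eq:stability-bound-u-Linf-Lp} and simultaneously gather the bounds needed for the pseudostress. Testing the second row with $\bv=\partial_t\bu$, and differentiating the first row in time before setting $\btau=\bsi$ to rewrite $(\bdiv\bsi,\partial_t\bu)=-\tfrac{1}{2\nu}\partial_t\|\bsi^\rd\|^2_{0,\Omega}$, I arrive at
$$\|\partial_t\bu\|^2_{0,\Omega} + \partial_t\Big(\tfrac{1}{2\nu}\|\bsi^\rd\|^2_{0,\Omega} + \tfrac{\alpha}{2}\|\bu\|^2_{0,\Omega} + \tfrac{\tF}{\rp}\|\bu\|^\rp_\bM\Big) = (\f,\partial_t\bu)_\Omega .$$
Absorbing $(\f,\partial_t\bu)_\Omega$ by Young into $\|\partial_t\bu\|^2_{0,\Omega}$ and integrating controls $\|\bsi^\rd\|_{\L^\infty(0,T;\bL^2(\Omega))}$, $\int_0^T\|\partial_t\bu\|^2_{0,\Omega}$ and $\|\bu\|^\rp_{\L^\infty(0,T;\bM)}$. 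Taking the $\rp$-th root of the last term (and using $(\sum a_i)^{1/\rp}\le\sum a_i^{1/\rp}$) produces precisely the exponents $2/\rp$ appearing in $\cG$ (cf.\ \eqref{eq:cG-definition}), giving \eqref{eq:stability-bound-u-Linf-Lp}.

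The crux is the full $\bbX$-norm control of $\bsi$, namely $\bdiv\bsi$ in $\L^2(0,T;\bL^\rq(\Omega))$. Solving the second row of \eqref{eq:weak-Brinkman-Forchheimer} for $\cB\bsi$ and using $\bdiv(\bbX_0)=\bM'$, together with $\||\bu|^{\rp-2}\bu\|_{\bM'}=\|\bu\|^{\rp-1}_\bM$ and $\bL^2(\Omega)\hookrightarrow\bL^\rq(\Omega)$, gives pointwise in $t$
$$\|\bdiv\bsi\|_{\bL^\rq(\Omega)} \le C\big(\|\partial_t\bu\|_{0,\Omega} + \|\bu\|_\bM + \tF\,\|\bu\|^{\rp-1}_\bM + \|\f\|_{0,\Omega}\big).$$
After squaring and integrating, the linear terms are controlled by the two preceding steps; the genuinely delicate contribution is the Forchheimer term $\int_0^T\|\bu\|^{2(\rp-1)}_\bM$, whose time exponent $2(\rp-1)$ exceeds the $\rp$ available from the basic energy estimate. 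I would dispatch it via the inf-sup bound $\|\bu\|_\bM\lesssim\|\bsi^\rd\|_{0,\Omega}$ combined with the $\L^\infty(0,T;\bL^2(\Omega))$-control of $\bsi^\rd$ from the previous step; this is exactly what generates the higher-order forcing term $\|\f\|^{\rp-1}_{\L^{2(\rp-1)}(0,T;\bL^2(\Omega))}$ and the initial-data term $\|\bu(0)\|^{\rp-1}_{0,\Omega}$ in $\cF$ (cf.\ \eqref{eq:cF-definition}), after the harmless (finite-$T$) embedding $\|\f\|_{\L^2}\le C_T\,\|\f\|_{\L^{2(\rp-1)}}$. Finally, \eqref{eq:deviatoric-inequality} upgrades the $\bsi^\rd$ and $\bdiv\bsi$ bounds to a bound on $\|\bsi\|_{\L^2(0,T;\bbX)}$, and collecting all data norms yields \eqref{eq:stability-bound} with constant $C_\cF$. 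The main obstacle is thus this $\bdiv\bsi$ estimate, which couples the time-derivative control (forcing the use of the $\partial_t\bu$-tested identity) with the super-critical growth of the Forchheimer nonlinearity (forcing the use of the $\L^\infty$-in-time bound \eqref{eq:stability-bound-u-Linf-Lp}), and whose careful bookkeeping is what fixes the unusual powers in $\cF$.
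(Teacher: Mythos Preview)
Your proposal is correct and follows essentially the same route as the paper: the same two energy identities (testing \eqref{eq:weak-Brinkman-Forchheimer} with $(\bsi,\bu)$ and, after differentiating the first row in time, with $(\bsi,\partial_t\bu)$), the same pointwise use of the inf-sup condition \eqref{eq:inf-sup-operator-B} to pass from $\|\bsi^\rd\|_{0,\Omega}$ to $\|\bu\|_\bM$, the same extraction of $\|\bdiv\bsi\|_{\bL^\rq}$ from the second row, and the final upgrade via \eqref{eq:deviatoric-inequality}. The only cosmetic difference is in the bookkeeping of the super-critical term $\int_0^T\|\bu\|^{2(\rp-1)}_\bM$: the paper bounds it by invoking the already-established inequality \eqref{eq:stability-bound-2} (producing $\int_0^T\|\f\|^{2(\rp-1)}_{0,\Omega}$ and $\|\bu(0)\|^{2(\rp-1)}_{0,\Omega}$ directly), whereas you route through the $\L^\infty(0,T;\bL^2(\Omega))$-control of $\bsi^\rd$; both choices are valid and, up to harmless reshuffling of initial-data powers, recover the stated $\cF$ and $\cG$.
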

\begin{proof}
We begin with choosing $(\btau,\bv) = (\bsi,\bu)$ in \eqref{eq:weak-Brinkman-Forchheimer} to get
\begin{equation*}
\frac{1}{2}\,\partial_t\,(\bu,\bu)_{\Omega} + [\cA(\bsi),\bsi] + [\cC(\bu),\bu] \,=\, (\f,\bu)_\Omega.
\end{equation*}
Next, we use the coercivity bounds in \eqref{eq:properties-operator-A}--\eqref{eq:properties-operator-C-1}, and Cauchy--Schwarz and Young's inequalities with $\rp = \rq = 2$ (cf. \eqref{eq:Young-inequality}), to obtain
\begin{equation}\label{eq:preliminary-stability-bound}
\frac{1}{2}\,\partial_t\,\|\bu\|^2_{0,\Omega} + \frac{1}{\nu}\,\|\bsi^\rd\|^2_{0,\Omega} + \tF\,\|\bu\|^{\rp}_{\bM} + \alpha\,\|\bu\|^2_{0,\Omega}
\,\leq\, \|\f\|_{\bM'}\,\|\bu\|_{\bM} 
\,\leq\, \frac{\delta}{2}\,\|\f\|^{2}_{0,\Omega} + \frac{1}{2\,\delta}\,\|\bu\|^2_{0,\Omega}.
\end{equation}
In turn, from the inf-sup condition of $\cB$ (cf. \eqref{eq:inf-sup-operator-B}), the first equation of \eqref{eq:weak-Brinkman-Forchheimer}, and the continuity bound of $\cA$ (cf. \eqref{eq:properties-operator-A}), we deduce that
\begin{equation*}
\beta\,\|\bu\|_{\bM} \,\leq\, \sup_{\btau\in \bbX_0} \frac{[\cB(\btau),\bu]}{\|\btau\|_{\bbX}} \,=\, -\,\sup_{\btau\in \bbX_0} \frac{[\cA(\bsi),\btau]}{\|\btau\|_{\bbX}} \,\leq\, \frac{1}{\nu}\,\|\bsi^\rd\|_{0,\Omega},
\end{equation*}
and then
\begin{equation*}
\frac{\beta^2\,\nu}{2}\,\|\bu\|^2_{\bM} \,\leq\, \frac{1}{2\,\nu}\,\|\bsi^\rd\|^2_{0,\Omega},
\end{equation*}
which combined with \eqref{eq:preliminary-stability-bound} and choosing $\delta = 1/\alpha$,
yields
\begin{equation}\label{eq:stability-bound-1}
  \frac{1}{2}\,\partial_t\,\|\bu\|^2_{0,\Omega} + \frac{1}{2\,\nu}\,\|\bsi^\rd\|^2_{0,\Omega}
  + \frac{\beta^2\,\nu}{2}\,\|\bu\|^{2}_{\bM} + \frac{\alpha}{2}\,\|\bu\|^2_{0,\Omega}
\,\leq\, \frac{1}{2\,\alpha}\,\|\f\|^{2}_{0,\Omega}.
\end{equation}
Notice that, in order to simplify the stability bound, we have neglected the term $\|\bu\|^{\rp}_{\bM}$ in the left-hand side of \eqref{eq:preliminary-stability-bound}.
Integrating \eqref{eq:stability-bound-1} from $0$ to $t\in (0,T]$, we obtain
\begin{equation}\label{eq:stability-bound-2}
\|\bu(t)\|^2_{0,\Omega} + \int^t_0 \Big( \|\bsi^\rd\|^2_{0,\Omega} + \|\bu\|^{2}_{\bM} + \|\bu\|^{2}_{0,\Omega} \Big)\, ds 
\,\leq\, C_1\,\Bigg( \int^t_0 \|\f\|^{2}_{0,\Omega}\, ds + \|\bu(0)\|^2_{0,\Omega} \Bigg),
\end{equation}
with $C_1>0$ depending only on $\nu, \alpha$, and $\beta$.

On the other hand, from the second equation of \eqref{eq:weak-Brinkman-Forchheimer}, we have the identity
\begin{equation*}
[\cB(\bsi),\bv] 
\,=\, -\,(\f,\bv)_{\Omega} + [\cC(\bu),\bv] + (\partial_t\,\bu,\bv)_{\Omega} \quad \forall\,\bv\in \bM.
\end{equation*}
Then, employing the property $\bdiv(\bbX_0) = \bM'$ in combination with \eqref{eq:properties-operator-C-1} and using similar arguments to \eqref{eq:stability-bound-2}, we deduce
\begin{equation}\label{eq:bound-int-div-bsi-1}
\begin{array}{l}
\ds \int^t_0 \|\bdiv\bsi\|^2_{\bL^{\rq}(\Omega)} \,ds 
\,\leq\, \wt{C}_2\,\int^t_0 \Big( \|\f\|^2_{0,\Omega} + \|\bu\|^{2(\rp-1)}_{\bM} + \|\bu\|^2_{0,\Omega} + \|\partial_t\,\bu\|^2_{0,\Omega} \Big)\,ds  \\ [3ex]
\ds\quad \,\leq\, C_2\,\Bigg( \int^t_0 \Big( \|\f\|^{2(\rp-1)}_{0,\Omega} + \|\f\|^2_{0,\Omega} \Big)\, ds + \|\bu(0)\|^{2(\rp-1)}_{0,\Omega} + \|\bu(0)\|^2_{0,\Omega} + \int^t_0 \|\partial_t\,\bu\|^2_{0,\Omega}\,ds \Bigg),
\end{array}
\end{equation}
with $C_2>0$ depending on $|\Omega|, \nu, \tF, \alpha$, and $\beta$.
Next, in order to bound the last term in \eqref{eq:bound-int-div-bsi-1}, we differentiate in time the first equation of \eqref{eq:weak-Brinkman-Forchheimer}, choose $(\btau, \bv) = (\bsi, \partial_t\,\bu)$, and employ Cauchy--Schwarz and Young's inequalities with $\rp = \rq = 2$ and $\delta = 1$ (cf. \eqref{eq:Young-inequality}), to obtain
\begin{equation*}
\frac{1}{2}\,\partial_t\,\Big( \frac{1}{\nu}\,\|\bsi^\rd\|^2_{0,\Omega} + \frac{2\,\tF}{\rp}\,\|\bu\|^{\rp}_{\bM} + \alpha\,\|\bu\|^2_{0,\Omega} \Big) + \|\partial_t\,\bu\|^2_{0,\Omega} 
\,\leq\, \frac{1}{2}\,\|\f\|^2_{0,\Omega} + \frac{1}{2}\,\|\partial_t\,\bu\|^2_{0,\Omega}. 
\end{equation*}
Integrating from $0$ to $t\in (0,T]$, we get
\begin{equation}\label{eq:bound-ut-int-ut}
\frac{2\,\tF}{\rp}\|\bu(t)\|^{\rp}_{\bM} + \int^t_0 \|\partial_t\,\bu\|^2_{0,\Omega}\,ds 
\,\leq\, C_3\,\Bigg( \int^t_0 \|\f\|^2_{0,\Omega}\,ds + \|\bsi^\rd(0)\|^2_{0,\Omega} 
+ \|\bu(0)\|^{\rp}_{\bM} + \|\bu(0)\|^2_{0,\Omega} \Bigg),
\end{equation}
with $C_3 := \max\big\{ 1,1/\nu,2\,\tF/\rp,\alpha \big\}$. 
Then, combining \eqref{eq:bound-ut-int-ut} with \eqref{eq:bound-int-div-bsi-1}, yields
\begin{equation*}
\begin{array}{l}
\ds \int^t_0 \|\bdiv\bsi\|^2_{\bL^{\rq}(\Omega)}\,ds \\ [3ex] 
\ds\quad\,\leq\, C_4\,\Bigg( \int^t_0 \Big( \|\f\|^{2(\rp-1)}_{0,\Omega} + \|\f\|^2_{0,\Omega} \Big)\,ds 
+ \|\bsi^\rd(0)\|^2_{0,\Omega} + \|\bu(0)\|^{\rp}_{\bM} + \|\bu(0)\|^{2(\rp-1)}_{0,\Omega} + \|\bu(0)\|^2_{0,\Omega} \Bigg),
\end{array}
\end{equation*}
which, combined with \eqref{eq:stability-bound-2} and \eqref{eq:deviatoric-inequality},
implies \eqref{eq:stability-bound}.
In addition, \eqref{eq:bound-ut-int-ut} implies \eqref{eq:stability-bound-u-Linf-Lp} with
\begin{equation}\label{eq:constant-CG-definition}
C_{\cG} \,:=\, \Big(\frac{\rp}{2\,\tF}\,\max\Big\{1,\frac{1}{\nu},\frac{2\,\tF}{\rp},\alpha \Big\}
\Big)^{1/\rp},
\end{equation}
concluding the proof.
\end{proof}

\begin{rem}
The stability bound \eqref{eq:stability-bound} can be derived
alternatively without the use of the inf-sup condition of $\cB$, but
in that case the expression of \eqref{eq:cF-definition} will be more
complicated.  We also note that \eqref{eq:stability-bound-u-Linf-Lp}
will be employed in the next section to deal with the nonlinear term
associated to the operator $\cC$ (cf. \eqref{eq:operators-A-B-C}),
which is necessary to obtain the corresponding error estimate.
\end{rem}


\section{Semidiscrete continuous-in-time approximation}\label{sec:semidiscrete-approximation}

In this section we introduce and analyze the semidiscrete continuous-in-time approximation of \eqref{eq:weak-Brinkman-Forchheimer}.
We analyze its solvability by employing the strategy developed in Section~\ref{sec:well-posedness-model}.
Finally, we perform error analysis and obtain rates of convergence of our discrete scheme.


\subsection{Existence and uniqueness of a solution}

Let $\cT_h$ be a shape-regular triangulation of $\Omega$ consisting of triangles (when $n=2$) or tetrahedra $K$ (when $n=3$) of diameter $h_K$, and define the mesh-size $h:=\max\big\{ h_K :\, K\in \cT_h \big\}$.
Then, for each $K\in \cT_h$, we let $\bRT_k(K)$ be the local Raviart--Thomas element of order $k\geq 0$, i.e., 
\begin{equation*}
\bRT_k(K) \,:=\, [\rP_k(K)]^n \oplus \rP_k(K)\,\bx,
\end{equation*}
where $\bx := (x_1,\dots,x_n)^\rt$ is a generic vector of $\R^n$ and $\rP_k(K)$ is the space of polynomials defined on $K$ of degree $\leq k$.
The finite element subspaces on $\Omega$ are defined as
\begin{equation*}
\begin{array}{l}
\ds \bbX_{h} \,:=\, \Big\{ \btau_h\in \bbX :\quad \bc^\rt\btau_h|_{K}\in \bRT_k(K) \quad \forall\,\bc\in \R^n \quad \forall\,K\in \cT_h \Big\},\quad \bbX_{0,h} \,:=\, \bbX_{h}\cap \bbX_0, \\ [3ex]
\ds \bM_h \,:=\, \Big\{ \bv_h\in \bM :\quad \bv_h|_{K}\in [\rP_k(K)]^n \quad \forall\,K\in \cT_h \Big\}.
\end{array}
\end{equation*}
The semidiscrete continuous-in-time problem associated with \eqref{eq:weak-Brinkman-Forchheimer} reads: for $t\in (0,T]$, find $(\bsi_h(t),\newline \bu_h(t))\in \L^\infty(0,T;\bbX_{0,h})\times \W^{1,\infty}(0,T;\bM_h)$, such that
\begin{equation}\label{eq:discrete-weak-Brinkman-Forchheimer}
\begin{array}{rlll}
\ds [\cA(\bsi_h),\btau_h] + [\cB(\btau_h),\bu_h] & = & 0 & \forall\, \btau_h\in \bbX_{0,h}, \\ [3ex]
\ds (\partial_t\,\bu_h,\bv_h)_{\Omega} - [\cB(\bsi_h),\bv_h] + [\cC(\bu_h),\bv_h] & = & (\f,\bv_h)_{\Omega} & \forall\,\bv_h\in \bM_h.
\end{array}
\end{equation}
As initial condition we take $(\bsi_h(0),\bu_h(0))$ to be a suitable approximations of
$(\bsi(0),\bu(0))$, the solution of \eqref{eq:initial-condition-problem}, that is, we chose $(\bsi_h(0),\bu_h(0))$ solving
\begin{equation}\label{eq:discrete-initial-condition-problem}
\begin{array}{rlll}
[\cA(\bsi_h(0)),\btau_h] + [\cB(\btau_h),\bu_h(0)] & = & 0 & \forall\,\btau_h\in \bbX_{0,h} \\ [3ex]
-\,[\cB(\bsi_h(0)),\bv_h] + [\cC(\bu_h(0)),\bv_h] & = & [\bG_0,\bv_h] & \forall\,\bv_h\in \bM_h,
\end{array}
\end{equation} 
with $\bG_0\in \bM'_2$ being the right-hand side of \eqref{eq:initial-condition-problem}.
This choice is necessary to guarantee that the discrete initial data is compatible in the sense of Lemma~\ref{lem:sol0-X-M}, which is needed for the application of Theorem~\ref{thm:well-posed-parabolic-problem}.
Notice that the well-posedness of problem \eqref{eq:discrete-initial-condition-problem}
follows from similar arguments to the proof of Lemma~\ref{lem:resolvent-system}.

Next, we recall for later use the discrete inf-sup condition associated to the operator $\cB$.
\begin{lem}\label{lem:discrete-inf-sup-operator-B}
There exists a constant $\wt{\beta} > 0$ such that
\begin{equation}\label{eq:discrete-inf-sup-operator-B}
\sup_{\0\neq \btau_h\in \bbX_{0,h}} \frac{[\cB(\btau_h),\bv_h]}{\|\btau_h\|_{\bbX}} 
\,\geq\, \wt{\beta}\,\|\bv_h\|_{\bM} \quad \forall\,\bv_h\in \bM_h.
\end{equation}
\end{lem}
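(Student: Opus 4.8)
The plan is to deduce the discrete inf-sup condition \eqref{eq:discrete-inf-sup-operator-B} from its continuous counterpart (Lemma~\ref{lem:inf-sup-operator-B}) via the classical Fortin argument. Concretely, it suffices to exhibit a linear operator $\Pi_h : \bbX_0\to \bbX_{0,h}$, uniformly bounded in $h$, satisfying
\begin{equation*}
[\cB(\Pi_h\btau - \btau),\bv_h] \,=\, 0 \quad \forall\,\bv_h\in \bM_h,
\qquad
\|\Pi_h\btau\|_{\bbX} \,\leq\, C_\Pi\,\|\btau\|_{\bbX} \quad \forall\,\btau\in \bbX_0 .
\end{equation*}
Indeed, granting this, for any $\0\neq\bv_h\in \bM_h$ one restricts the discrete supremum to the range of $\Pi_h$, uses the first identity to replace $\cB(\Pi_h\btau)$ by $\cB(\btau)$ in the numerator, and the second bound to estimate $\|\Pi_h\btau\|_{\bbX}\leq C_\Pi\|\btau\|_{\bbX}$ in the denominator; this gives $\sup_{\btau_h}[\cB(\btau_h),\bv_h]/\|\btau_h\|_{\bbX}\geq C_\Pi^{-1}\sup_{\btau}[\cB(\btau),\bv_h]/\|\btau\|_{\bbX}\geq (\beta/C_\Pi)\,\|\bv_h\|_{\bM}$, so that \eqref{eq:discrete-inf-sup-operator-B} holds with $\wt{\beta}:=\beta/C_\Pi$.

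The operator $\Pi_h$ is built from the Raviart--Thomas interpolant applied row by row, which I denote $\Pi^{\bRT}_h$. Its defining feature is the commuting relation $\bdiv\,\Pi^{\bRT}_h\btau = \cP_h(\bdiv\btau)$, where $\cP_h$ is the vector $\bL^2$-orthogonal projection onto $\bM_h$; since $\bdiv(\bbX_{0,h})=\bM_h$, this immediately yields the first Fortin property, because for $\bv_h\in \bM_h$ we have $[\cB(\Pi^{\bRT}_h\btau),\bv_h]=(\cP_h(\bdiv\btau),\bv_h)_\Omega=(\bdiv\btau,\bv_h)_\Omega=[\cB(\btau),\bv_h]$. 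As $\cP_h$ acts locally as the $\bL^2$ projection onto polynomials, it is stable in $\bL^\rq$ on shape-regular meshes for any $\rq\in(1,2)$, so $\|\bdiv\,\Pi^{\bRT}_h\btau\|_{\bL^\rq(\Omega)}\leq C\,\|\bdiv\btau\|_{\bL^\rq(\Omega)}$, which controls the divergence part of $\|\cdot\|_{\bbX}$. To land in $\bbX_{0,h}$ rather than $\bbX_h$ one subtracts the trace mean, replacing $\Pi^{\bRT}_h\btau$ by $\Pi^{\bRT}_h\btau-\tfrac{1}{n|\Omega|}(\tr(\Pi^{\bRT}_h\btau),1)_\Omega\,\bI$; since $\bI$ is constant (hence in $\bbX_h$) and divergence-free, this correction preserves both $\bdiv\,\Pi_h\btau$ and the first Fortin property, and is easily controlled in $\bbX$.

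The main obstacle is the remaining bound $\|\Pi_h\btau\|_{0,\Omega}\leq C\,\|\btau\|_{\bbX}$, which is intertwined with the very \emph{definition} of $\Pi^{\bRT}_h$ on the low-regularity space $\bbX_0=\bbH(\bdiv_{\rq};\Omega)\cap\{(\tr(\cdot),1)_\Omega=0\}$: the canonical Raviart--Thomas degrees of freedom involve face moments of $\btau$ that are not controlled by the $\bbH(\bdiv_{\rq};\Omega)$-norm. The remedy I would follow is to precompose $\Pi^{\bRT}_h$ with a Cl\'ement/Scott--Zhang-type smoothing operator $R_h$ mapping $\bbX_0$ into a piecewise-smooth subspace on which $\Pi^{\bRT}_h$ is well defined and $\bL^2$-stable, while retaining the commuting identity $\bdiv\,(\Pi^{\bRT}_h R_h)\btau=\cP_h(\bdiv\btau)$ and an $h$-uniform estimate $\|(\Pi^{\bRT}_h R_h)\btau\|_{0,\Omega}\leq C\big(\|\btau\|_{0,\Omega}+\|\bdiv\btau\|_{\bL^\rq(\Omega)}\big)$. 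This is precisely the construction carried out for $\rp=4$, $\rq=4/3$ in \cite{cgo2018-pp}, and it extends to the present range $\rp\in[3,4]$, $1/\rp+1/\rq=1$ without change, since only the $\bL^\rq$-stability of local polynomial projections (valid for all $\rq\in(1,2)$) enters. Combining the two Fortin properties with the supremum argument above then gives \eqref{eq:discrete-inf-sup-operator-B}.
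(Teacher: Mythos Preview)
Your proposal is correct and follows essentially the same approach as the paper: the paper's own proof simply refers to \cite[Lemma~4.4]{cgo2018-pp} for the case $\rp=4$, $\rq=4/3$ and asserts that the argument extends to $\rp\in[3,4]$, which is precisely the Fortin-operator construction you outline. Your sketch in fact provides more detail than the paper does, correctly identifying the commuting property, the $\bL^\rq$-stability of the local projection, the trace-mean correction to land in $\bbX_{0,h}$, and the need for a smoothing step to handle the low regularity of $\bbH(\bdiv_\rq;\Omega)$.
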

\begin{proof}
For the case $\rp = 4$ and $\rq = 4/3$ we refer the reader to \cite[Lemma~4.4]{cgo2018-pp}, which proof can be easily extended to the case $\rp\in [3,4]$ and $1/\rp + 1/\rq = 1$.
We omit further details.
\end{proof}

We can establish the following well-posedness result.
\begin{thm}\label{thm:well-posed-discrete-result}
For each $\f\in \W^{1,1}(0,T;\bL^2(\Omega))$ and $(\bsi_h(0),\bu_h(0))$ satisfying \eqref{eq:discrete-initial-condition-problem}, there exists a unique solution to \eqref{eq:discrete-weak-Brinkman-Forchheimer} with $(\bsi_h(t), \bu_h(t))\in \L^\infty(0,T;\bbX_{0,h})\times \W^{1,\infty}(0,T;\bM_h)$.
Moreover, assuming sufficient regularity of the data, there exist a positive constant $\wt{C}_{\cF}$ depending only on $|\Omega|, \nu, \alpha, \tF, \wt{\beta}$, such that 
\begin{equation}\label{eq:discrete-stability-bound}
\|\bsi_h\|_{\L^{2}(0,T;\bbX)} 
+ \|\bu_h\|_{\L^{2}(0,T;\bM)} 
+ \|\bu_h\|_{\L^\infty(0,T;\bL^2(\Omega))}
\,\leq\, \wt{C}_{\cF}\,\cF(\f,\bsi_h(0),\bu_h(0)),
\end{equation}
and with $C_{\cG}$ defined in \eqref{eq:constant-CG-definition}, there holds
\begin{equation}\label{eq:discrete-stability-bound-u-Linf-Lp}
\|\bu_h\|_{\L^{\infty}(0,T;\bM)} 
\,\leq\, C_{\cG}\,\cG(\f,\bsi_h(0),\bu_h(0)),
\end{equation}
where $\cF$ and $\cG$ are defined in \eqref{eq:cF-definition} and \eqref{eq:cG-definition}, respectively.
\end{thm}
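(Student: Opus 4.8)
The plan is to mirror the continuous analysis of Section~\ref{sec:well-posedness-model} at the discrete level, replacing the infinite-dimensional spaces $\bbX_0$ and $\bM$ by their finite element counterparts $\bbX_{0,h}$ and $\bM_h$, and replacing the continuous inf-sup Lemma~\ref{lem:inf-sup-operator-B} and the deviatoric inequality \eqref{eq:deviatoric-inequality} by their discrete analogues (Lemma~\ref{lem:discrete-inf-sup-operator-B}, together with the discrete version of \eqref{eq:deviatoric-inequality} that holds on $\bbX_{0,h}\subset\bbX_0$ since the latter is a subspace). First I would recast \eqref{eq:discrete-weak-Brinkman-Forchheimer} in the operator form of Theorem~\ref{thm:well-posed-parabolic-problem}, using the same block operators $\cN$ and $\cM$ but now understood as maps on the finite-dimensional space $\bbX_{0,h}\times\bM_h$. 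The range condition $Rg(\cN+\cM)=\bE'_{b,h}$ follows from the discrete analogue of Lemma~\ref{lem:resolvent-system}: one solves the discrete resolvent system by introducing the same regularization $\epsilon\,\cR_\bsi$, verifying via the coercivity estimate \eqref{eq:cJ-coercivity} (whose ingredients---the monotonicity bounds \eqref{eq:properties-operator-A}, \eqref{eq:properties-operator-C-1}, \eqref{eq:R-bsi-bounded-coercive}, and the deviatoric inequality---all remain valid on $\bbX_{0,h}$) that the discrete operator $\cJ$ is coercive, applying Browder--Minty, and passing to the limit $\epsilon\to0$. The compatible discrete initial datum $(\bsi_h(0),\bu_h(0))$ lies in the discrete domain $\cD_h$ by construction \eqref{eq:discrete-initial-condition-problem}, so all hypotheses of Theorem~\ref{thm:well-posed-parabolic-problem} are met and existence follows. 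Uniqueness is obtained exactly as in the proof of Theorem~\ref{thm:well-posed-result}: testing the difference of two solutions against itself, invoking the strong monotonicity bounds \eqref{eq:properties-operator-A} and \eqref{eq:properties-operator-C-3}, integrating in time, and then recovering $\bdiv\bsi_h$ via the discrete inf-sup condition and \eqref{eq:deviatoric-inequality}.

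For the stability bounds \eqref{eq:discrete-stability-bound} and \eqref{eq:discrete-stability-bound-u-Linf-Lp}, I would repeat the energy argument of Theorem~\ref{thm:stability-bound} verbatim at the discrete level. Choosing $(\btau_h,\bv_h)=(\bsi_h,\bu_h)$ in \eqref{eq:discrete-weak-Brinkman-Forchheimer} and applying the coercivity estimates \eqref{eq:properties-operator-A}--\eqref{eq:properties-operator-C-1} together with Young's inequality yields the discrete analogue of \eqref{eq:stability-bound-1}; here the discrete inf-sup condition \eqref{eq:discrete-inf-sup-operator-B} is used in place of \eqref{eq:inf-sup-operator-B} to bound $\|\bu_h\|_\bM$ by $\tfrac1\nu\|\bsi_h^\rd\|_{0,\Omega}$, which is why the constant $\wt{C}_\cF$ depends on $\wt\beta$ rather than $\beta$. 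Integrating in time gives the discrete counterpart of \eqref{eq:stability-bound-2}. To control $\int_0^t\|\bdiv\bsi_h\|^2_{\bL^\rq(\Omega)}$, I would use that $\bdiv(\bbX_{0,h})=\bM_h$ (a property of the Raviart--Thomas pairing) to recover $\bdiv\bsi_h$ from the second equation, and then differentiate the first discrete equation in time and test with $(\bsi_h,\partial_t\bu_h)$ to bound $\int_0^t\|\partial_t\bu_h\|^2_{0,\Omega}$, exactly reproducing \eqref{eq:bound-ut-int-ut}. Combining these with the discrete \eqref{eq:stability-bound-2} and \eqref{eq:deviatoric-inequality} yields \eqref{eq:discrete-stability-bound}, while \eqref{eq:bound-ut-int-ut} alone gives \eqref{eq:discrete-stability-bound-u-Linf-Lp} with the same constant $C_\cG$ from \eqref{eq:constant-CG-definition}.

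The main obstacle---and the one place where the discrete argument is not a word-for-word transcription---is verifying the two structural properties that were automatic in the continuous setting: the discrete inf-sup condition and the surjectivity $\bdiv(\bbX_{0,h})=\bM_h$. These are exactly what fail for an arbitrary choice of finite element spaces, and they are precisely why the Raviart--Thomas pair is chosen. Fortunately both are already supplied: Lemma~\ref{lem:discrete-inf-sup-operator-B} provides the inf-sup bound (with constant $\wt\beta$ independent of $h$), and the surjectivity $\bdiv\,\bRT_k=\rP_k$ is a standard commuting-diagram property of the Raviart--Thomas interpolant. The only subtlety worth checking is that the deviatoric inequality \eqref{eq:deviatoric-inequality}, stated for $\btau\in\bbX_0$, applies on $\bbX_{0,h}$; this is immediate because $\bbX_{0,h}\subset\bbX_0$, so the constant $C_\rd$ is inherited and remains $h$-independent. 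Once these ingredients are in hand, every step of the well-posedness and stability proofs transfers directly, and the constants $\wt C_\cF$, $C_\cG$ depend on the data in the same way as their continuous counterparts.
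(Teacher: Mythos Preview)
Your proposal is correct and follows essentially the same approach as the paper, which states the proof in a single sentence: since $\bbX_{0,h}\subset\bbX_0$, $\bM_h\subset\bM$, and $\bdiv(\bbX_{0,h})=\bM_h$, one uses the discrete inf-sup condition \eqref{eq:discrete-inf-sup-operator-B} in place of the continuous one and repeats the proofs of Theorems~\ref{thm:well-posed-result} and~\ref{thm:stability-bound} verbatim. You have spelled out in detail exactly the steps the paper leaves implicit, and correctly identified the three structural ingredients (discrete inf-sup, divergence surjectivity onto $\bM_h$, inheritance of the deviatoric inequality on the subspace) that make the transfer go through.
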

\begin{proof}
From the fact that $\bbX_{0,h}\subset \bbX_0, \bM_h\subset \bM$, and $\bdiv(\bbX_{0,h}) = \bM_h$, considering $(\bsi_h(0),\bu_h(0))$ satisfying \eqref{eq:discrete-initial-condition-problem}, and employing the discrete inf-sup condition of $\cB$ (cf. \eqref{eq:discrete-inf-sup-operator-B}), the proof is identical to the proofs of Theorems~\ref{thm:well-posed-result} and \ref{thm:stability-bound}.
\end{proof}


\subsection{Error analysis}

We proceed with establishing rates of convergence. Let
$\be_{\bsi} = \bsi - \bsi_h$ and $\be_{\bu} = \bu - \bu_h$.
As usual, we shall then decompose these errors into
\begin{equation}\label{eq:error-decomposition}
\be_{\bsi} = \bdelta_{\bsi} + \bbeta_{\bsi},\quad
\be_{\bu} = \bdelta_{\bu} + \bbeta_{\bu},
\end{equation}
with
\begin{equation*}
\bdelta_{\bsi} = \bsi - \bPi^k_h(\bsi),\quad 
\bbeta_{\bsi} = \bPi^k_h(\bsi) - \bsi_h,\quad
\bdelta_{\bu} = \bu - \bP^k_h(\bu),\quad 
\bbeta_{\bu} = \bP^k_h(\bu) - \bu_h,
\end{equation*}
where $\bP^k_h : \bM \to \bM_h$ is the vector version of the usual orthogonal projector with respect to $\L^2$-inner product, satisfying:
\begin{equation}\label{eq:Pk-projection-property}
(\bu - \bP^k_h(\bu),\bv_h)_{\Omega} \,=\, 0 \quad \forall\,\bv_h\in \bM_h,
\end{equation}
and, given $\rs\geq 2\,n/(n+2)$, $\bPi^k_h : \bbH_{\rs} \to \bbX_h$ is the Raviart--Thomas
interpolation operator, where
\begin{equation}\label{Hs-defn}
\bbH_{\rs} \,:=\, \Big\{ \btau\in \bbX :\quad \btau|_{K}\in \bbW^{1,\rs}(K)\quad \forall\,K\in \cT_h \Big\},
\end{equation}
satisfying \cite[Lemma~1.41]{Ern-Guermond}:
\begin{equation}\label{eq:Pik-interpolator}
\bdiv(\bPi^k_h(\btau)) \,=\, \bP^k_h(\bdiv(\btau)) \quad \forall\,\btau\in \bbH_{\rs}.
\end{equation}

Subtracting the discrete problem \eqref{eq:discrete-weak-Brinkman-Forchheimer} from the continuous one \eqref{eq:weak-Brinkman-Forchheimer}, we obtain the following error system:
\begin{equation}\label{eq:error-system}
\begin{array}{rll}
[\cA(\bsi - \bsi_h),\btau_h] + [\cB(\btau_h),\bu - \bu_h] & = & 0 \quad \forall\,\btau_h\in \bbX_{0,h}, \\ [3ex]
(\partial_t\,(\bu - \bu_h),\bv_h)_{\Omega} - [\cB(\bsi - \bsi_h),\bv_h] + [\cC(\bu) - \cC(\bu_h),\bv_h] & = & 0 \quad \forall\,\bv_h\in \bM_h.
\end{array}
\end{equation}

We now establish the main result of this section.
\begin{thm}\label{thm:error-estimates}	
Let $(\bsi(t),\bu(t))\in \L^{\infty}(0,T;\bbX_0)\times \W^{1,\infty}(0,T;\bL^2(\Omega))\cap \L^\infty(0,T;\bM)$ and $(\bsi_h(t), \bu_h(t))\in \L^{\infty}(0,T;\bbX_{0,h})\times \W^{1,\infty}(0,T;\bM_h)$ be the unique solutions of the continuous and semidiscrete problems \eqref{eq:weak-Brinkman-Forchheimer} and \eqref{eq:discrete-weak-Brinkman-Forchheimer}, respectively.
Assume that $\bsi\in \bbX_0\cap \bbH_{\rs}$.
Then there exist $\wt{C}, \wh{C} > 0$ depending only on
$|\Omega|, \nu, \alpha, \tF, \wt{\beta}, \rp$, and data, such that
\begin{equation}\label{eq:error-estimate}
\begin{array}{l}
\ds \|\be^{\rd}_{\bsi}\|_{\L^2(0,T;\bbL^2(\Omega))} + \|\be_{\bu}\|_{\L^{2}(0,T;\bM)} + \|\be_{\bu}\|_{\L^\infty(0,T;\bL^2(\Omega))} \\ [2ex]
\ds\quad\,\leq\, \wt{C}\,\Big( \|\bdelta_{\bsi}\|_{\L^2(0,T;\bbX)} + \|\bdelta_{\bu}\|^{\rp/2}_{\L^2(0,T;\bM)} + \|\bdelta_{\bu}\|^{\rq/2}_{\L^2(0,T;\bM)} + \|\bdelta_{\bu}\|_{\L^2(0,T;\bM)} + \|\bdelta_{\bu}\|_{\L^\infty(0,T;\bM)} \\ [2ex] 
\ds\quad\,+\,\,\|\bdelta_{\bsi}(0)\|_{\bbX} + \|\bdelta_{\bu}(0)\|^{\rp/2}_{\bM} + \|\bdelta_{\bu}(0)\|^{\rq/2}_{\bM} + \|\bdelta_{\bu}(0)\|_{\bM} \Big)
\end{array}
\end{equation}
and
\begin{equation}\label{eq:full-error-estimate}
\begin{array}{l}
\ds \|\be_{\bsi}\|_{\L^2(0,T;\bbX)} \\ [2ex]
\ds\quad\,\leq\, \wh{C}\,h^{-n(\rp-2)/(2\rp)} \Big( \|\partial_t\,\bdelta_{\bsi}\|_{\L^2(0,T;\bbX)} + \|\bdelta_{\bsi}\|_{\L^2(0,T;\bbX)} + \|\bdelta_{\bu}\|^{\rp/2}_{\L^2(0,T;\bM)} + \|\bdelta_{\bu}\|^{\rq/2}_{\L^2(0,T;\bM)} \\ [2ex] 
\ds\quad\,+\,\,\|\bdelta_{\bu}\|_{\L^2(0,T;\bM)} + \|\bdelta_{\bu}\|_{\L^\infty(0,T;\bM)} + \|\bdelta_{\bsi}(0)\|_{\bbX} + \|\bdelta_{\bu}(0)\|^{\rp/2}_{\bM} + \|\bdelta_{\bu}(0)\|^{\rq/2}_{\bM} + \|\bdelta_{\bu}(0)\|_{\bM} \Big).
\end{array}
\end{equation}
\end{thm}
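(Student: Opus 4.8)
The plan is to run a standard mixed--method energy argument on the quasi-Galerkin splitting \eqref{eq:error-decomposition}, first reducing the error system \eqref{eq:error-system} to one for the discrete components $(\bbeta_\bsi,\bbeta_\bu)$. The two projection identities do all the cleanup: because $\bdiv(\bbX_{0,h})=\bM_h$ and $\bP^k_h$ is the $\L^2$-orthogonal projection, the terms $[\cB(\btau_h),\bdelta_\bu]$ and $(\partial_t\,\bdelta_\bu,\bv_h)_\Omega$ vanish, while the commuting property \eqref{eq:Pik-interpolator} gives $\bdiv\,\bdelta_\bsi=\bdiv\bsi-\bP^k_h(\bdiv\bsi)$, so $[\cB(\bdelta_\bsi),\bv_h]=0$ as well. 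Using linearity of $\cA,\cB$, the system \eqref{eq:error-system} collapses to
\begin{equation*}
\begin{array}{c}
[\cA(\bbeta_\bsi),\btau_h] + [\cB(\btau_h),\bbeta_\bu] \,=\, -[\cA(\bdelta_\bsi),\btau_h], \\ [1ex]
(\partial_t\,\bbeta_\bu,\bv_h)_\Omega - [\cB(\bbeta_\bsi),\bv_h] + [\cC(\bu)-\cC(\bu_h),\bv_h] \,=\, 0,
\end{array}
\end{equation*}
for all $(\btau_h,\bv_h)\in\bbX_{0,h}\times\bM_h$.

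Next I would take $(\btau_h,\bv_h)=(\bbeta_\bsi,\bbeta_\bu)$ and add, so the $[\cB(\bbeta_\bsi),\bbeta_\bu]$ terms cancel and
\begin{equation*}
\tfrac12\,\partial_t\,\|\bbeta_\bu\|^2_{0,\Omega} + [\cA(\bbeta_\bsi),\bbeta_\bsi] + [\cC(\bu)-\cC(\bu_h),\bbeta_\bu] \,=\, -[\cA(\bdelta_\bsi),\bbeta_\bsi].
\end{equation*}
Writing $\bbeta_\bu=\be_\bu-\bdelta_\bu$ in the nonlinear term, the coercivity \eqref{eq:properties-operator-A} and the strong monotonicity \eqref{eq:properties-operator-C-3} leave on the left the dissipative quantities $\tfrac1\nu\|\bbeta^\rd_\bsi\|^2_{0,\Omega}+\alpha\|\be_\bu\|^2_{0,\Omega}+\tF\,C_\rp\|\be_\bu\|^\rp_\bM$, at the cost of the consistency terms $[\cA(\bdelta_\bsi),\bbeta_\bsi]$ and $[\cC(\bu)-\cC(\bu_h),\bdelta_\bu]$ on the right. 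The first is handled by continuity of $\cA$ and Young; for the second I would apply the continuity bound \eqref{eq:properties-operator-C-2} and invoke the $\L^\infty(0,T;\bM)$ stability bounds \eqref{eq:stability-bound-u-Linf-Lp}, \eqref{eq:discrete-stability-bound-u-Linf-Lp} to absorb $(\|\bu\|_\bM+\|\bu_h\|_\bM)^{\rp-2}$ into the constant, reducing it to $C\|\be_\bu\|_\bM\|\bdelta_\bu\|_\bM$. Balancing this, after time integration, against $\alpha\|\be_\bu\|^2_{0,\Omega}$ and $\tF C_\rp\|\be_\bu\|^\rp_\bM$ by Young's inequality with the exponents $(2,2)$ and $(\rp,\rq)$ and Hölder in time (using $\rq<2<\rp$) is precisely what generates the three powers $\|\bdelta_\bu\|_{\L^2(0,T;\bM)}$, $\|\bdelta_\bu\|^{\rq/2}_{\L^2(0,T;\bM)}$, $\|\bdelta_\bu\|^{\rp/2}_{\L^2(0,T;\bM)}$ once the square root of the energy inequality is taken; the initial contribution $\|\bbeta_\bu(0)\|_{0,\Omega}$ is bounded by the same testing argument applied to the stationary discrete system \eqref{eq:discrete-initial-condition-problem}, producing the companion terms $\|\bdelta_\bsi(0)\|_\bbX+\|\bdelta_\bu(0)\|_\bM+\|\bdelta_\bu(0)\|^{\rq/2}_\bM+\|\bdelta_\bu(0)\|^{\rp/2}_\bM$.

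Since $\|\be_\bu\|_{\L^2(0,T;\bM)}$ is not directly controlled by coercivity (the dissipative terms $\|\be_\bu\|^\rp_\bM$ and $\|\be_\bu\|^2_{0,\Omega}$ sit in the wrong norms), I would recover it from the first reduced equation, which reads $[\cB(\btau_h),\bbeta_\bu]=-[\cA(\be_\bsi),\btau_h]$: the discrete inf--sup condition \eqref{eq:discrete-inf-sup-operator-B} and continuity of $\cA$ give $\wt{\beta}\,\|\bbeta_\bu\|_\bM\le\tfrac1\nu\|\be^\rd_\bsi\|_{0,\Omega}$, and a triangle inequality with $\be=\bdelta+\bbeta$ then closes \eqref{eq:error-estimate}.

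For \eqref{eq:full-error-estimate} the deviatoric inequality \eqref{eq:deviatoric-inequality} reduces the task to bounding $\|\bdiv\,\bbeta_\bsi\|_{\L^2(0,T;\bL^\rq(\Omega))}$. As $\bdiv\,\bbeta_\bsi\in\bM_h$, writing $(\bdiv\,\bbeta_\bsi,\bv)_\Omega=(\bdiv\,\bbeta_\bsi,\bP^k_h\bv)_\Omega$ and using the second reduced equation bounds $\|\bdiv\,\bbeta_\bsi\|_{\bL^\rq(\Omega)}$ by $\|\partial_t\,\bbeta_\bu\|_{0,\Omega}+C\|\be_\bu\|_\bM$. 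The crux of the whole proof is then controlling $\partial_t\,\bbeta_\bu$: I would differentiate the first reduced equation in time, test it with $\bbeta_\bsi$ (so that $[\cA(\partial_t\,\bbeta_\bsi),\bbeta_\bsi]=\tfrac1{2\nu}\partial_t\|\bbeta^\rd_\bsi\|^2_{0,\Omega}$ by symmetry of $\cA$) and test the second with $\partial_t\,\bbeta_\bu\in\bM_h$. The delicate term is $[\cC(\bu)-\cC(\bu_h),\partial_t\,\bbeta_\bu]$: $\partial_t\,\bbeta_\bu$ is only controlled in $\L^2$ yet pairs with $\bM'=\bL^\rq$, so I must pass from $\|\partial_t\,\bbeta_\bu\|_\bM$ to $\|\partial_t\,\bbeta_\bu\|_{0,\Omega}$ through the inverse inequality $\|\bv_h\|_\bM\le C\,h^{-n(\rp-2)/(2\rp)}\|\bv_h\|_{0,\Omega}$ on $\bM_h$. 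This is exactly the step that injects the factor $h^{-n(\rp-2)/(2\rp)}$ and, via the right-hand side $[\cA(\partial_t\,\bdelta_\bsi),\bbeta_\bsi]$, the term $\|\partial_t\,\bdelta_\bsi\|_{\L^2(0,T;\bbX)}$; integrating in time and substituting the already-proved bounds for $\|\be_\bu\|_{\L^2(0,T;\bM)}$ and $\|\be^\rd_\bsi\|_{\L^2(0,T;\bbL^2(\Omega))}$ yields \eqref{eq:full-error-estimate}. The main obstacle throughout is the monotone Forchheimer term: in the first estimate it forces the mixed Young/Hölder balancing that fixes the fractional exponents, and in the second it forces the inverse-inequality loss $h^{-n(\rp-2)/(2\rp)}$.
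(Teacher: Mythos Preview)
Your proposal is correct and follows essentially the same route as the paper: energy testing of the error system with $(\bbeta_\bsi,\bbeta_\bu)$ plus the projection identities to kill the mixed terms, the discrete inf--sup \eqref{eq:discrete-inf-sup-operator-B} to recover the $\L^2(0,T;\bM)$ control, and for \eqref{eq:full-error-estimate} the time--differentiated first equation tested with $(\bbeta_\bsi,\partial_t\bbeta_\bu)$ together with the inverse inequality to produce the $h^{-n(\rp-2)/(2\rp)}$ loss. The only minor deviation is in the nonlinear splitting: you write $\bbeta_\bu=\be_\bu-\bdelta_\bu$ and use both stability bounds \eqref{eq:stability-bound-u-Linf-Lp}--\eqref{eq:discrete-stability-bound-u-Linf-Lp} to absorb $(\|\bu\|_\bM+\|\bu_h\|_\bM)^{\rp-2}$, whereas the paper inserts $\cC(\bP^k_h(\bu))$ and expands $(\|\bdelta_\bu\|_\bM+2\|\bu\|_\bM)^{\rp-2}$ via \eqref{eq:ab-m-power-inequality}, which is what actually generates the separate $\|\bdelta_\bu\|^{\rp/2}$ and $\|\bdelta_\bu\|^{\rq/2}$ contributions; your version yields only the $\rq/2$ power, which is a slightly sharper bound and of course still implies \eqref{eq:error-estimate}.
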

%
\begin{proof}
We first note that $\bsi\in \bbX_0\cap \bbH_{\rs}$ (cf. \eqref{Hs-defn}), so $\bPi^k_h(\bsi)$ is well-defined.
Then, adding and subtracting suitable terms in \eqref{eq:error-system} with $(\btau_h,\bv_h) = (\bbeta_{\bsi},\bbeta_{\bu})\in \bbX_{0,h}\times \bM_h$, and employing the lower bounds of $\cA$ and $\cC$ (cf. \eqref{eq:properties-operator-A}, \eqref{eq:properties-operator-C-3}), we deduce
\begin{equation}\label{eq:error-bound-1}
\begin{array}{l}
\ds \frac{1}{2}\,\partial_t\,(\bbeta_\bu,\bbeta_\bu)_{\Omega} + \frac{1}{\nu}\,\|\bbeta^\rd_{\bsi}\|^2_{0,\Omega} + \tF\,C_{\rp}\,\|\bbeta_{\bu}\|^{\rp}_{\bM} + \alpha\,\|\bbeta_{\bu}\|^2_{0,\Omega} \\ [2ex]
\ds\quad \,=\, -\,[\cA(\bdelta_\bsi),\bbeta_\bsi] - [\cC(\bu) - \cC(\bP^k_h(\bu)),\bbeta_\bu] - (\partial_t\,\bdelta_\bu,\bbeta_\bu)_{\Omega} + [\cB(\bdelta_\bsi),\bbeta_\bu] - [\cB(\bbeta_\bsi),\bdelta_\bu]. 
\end{array}
\end{equation}
Notice that $\bdiv(\bbeta_{\bsi})\in \bM_h$. 
Then, from the projection properties \eqref{eq:Pk-projection-property}--\eqref{eq:Pik-interpolator}, the last three terms in \eqref{eq:error-bound-1} are zero.
In addition, using the continuity bounds of $\cA$ and $\cC$ (cf. \eqref{eq:properties-operator-A}, \eqref{eq:properties-operator-C-2a}--\eqref{eq:properties-operator-C-2}), H\"older and Young's inequalities with $\delta = (\rp\,\tF\,C_{\rp}/2)^{2/\rp}$ (cf. \eqref{eq:Young-inequality}), and inequality \cite[Lemma~2.2]{Adams-Fournier}:
\begin{equation}\label{eq:ab-m-power-inequality}
(a + b)^{m} \leq 2^{m-1}(a^{m} + b^{m}) \quad\forall\,a,b\geq 0,\,\,\forall\, m\geq 1,
\end{equation}
with $m = \rp-2\in [1,2]$, we deduce that the right-hand side of \eqref{eq:error-bound-1} is bounded as follows:
\begin{equation}\label{eq:error-bound-1b}
\begin{array}{l}
\ds [\cA(\bdelta_{\bsi}),\bbeta_{\bsi}] + [\cC(\bu) - \cC(\bP^k_h(\bu)),\bbeta_{\bu}] \\ [2ex]
\ds\quad\,\leq\, \frac{1}{\nu}\,\|\bdelta_{\bsi}\|_{0,\Omega}\,\|\bbeta^\rd_{\bsi}\|_{0,\Omega} + \alpha\,\|\bdelta_{\bu}\|_{0,\Omega}\,\|\bbeta_{\bu}\|_{0,\Omega} + \tF\,c_{\rp}\,\big( \|\bdelta_{\bu}\|_{\bM} + 2\,\|\bu\|_{\bM} \big)^{\rp-2}\,\|\bdelta_{\bu}\|_{\bM}\,\|\bbeta_{\bu}\|_{\bM} \\ [2ex]
\ds\quad\,\leq\, \frac{1}{2\,\nu}\,\|\bdelta_{\bsi}\|^2_{0,\Omega} + \frac{\alpha}{2}\,\|\bdelta_{\bu}\|^2_{0,\Omega} + C(\rp,\rq)\,\big( \|\bdelta_{\bu}\|^{\rp}_{\bM} + \|\bu\|^{\rq(\rp-2)}_{\bM}\,\|\bdelta_{\bu}\|^{\rq}_{\bM} \big) \\ [2ex] 
\ds\quad\,+\,\,\frac{1}{2}\,\Big( \frac{1}{\nu}\,\|\bbeta^\rd_{\bsi}\|^2_{0,\Omega} + \tF\,C_{\rp}\,\|\bbeta_{\bu}\|^{\rp}_{\bM} + \alpha\,\|\bbeta_{\bu}\|^2_{0,\Omega} \Big),
\end{array}
\end{equation}
where $C(\rp,\rq) := 2^{2\rq(\rp-2)+\rq/\rp-1}\,\tF\,c^{\rq}_{\rp}/(\rq\,(\rp\,C_\rp)^{\rq/\rp})$.
Thus, combining \eqref{eq:error-bound-1} with \eqref{eq:error-bound-1b}, we obtain
\begin{equation}\label{eq:error-bound-1c}
\begin{array}{l}
\ds \partial_t\,\|\bbeta_{\bu}\|^2_{0,\Omega} + \frac{1}{\nu}\,\|\bbeta^\rd_{\bsi}\|^2_{0,\Omega} + \tF\,C_{\rp}\,\|\bbeta_{\bu}\|^{\rp}_{\bM} + \alpha\,\|\bbeta_{\bu}\|^2_{0,\Omega} \\ [2ex] 
\ds\quad\,\leq\, \frac{1}{\nu}\,\|\bdelta_{\bsi}\|^2_{0,\Omega} + \alpha\,\|\bdelta_{\bu}\|^2_{0,\Omega} + 2\,C(\rp,\rq)\,\big( \|\bdelta_{\bu}\|^{\rp}_{\bM} + \|\bu\|^{\rq(\rp-2)}_{\bM}\,\|\bdelta_{\bu}\|^{\rq}_{\bM} \big).
\end{array}
\end{equation}
Then, integrating \eqref{eq:error-bound-1c} from $0$ to $t\in (0,T]$ and recalling that $\|\bu\|_{\L^{\infty}(0,T;\bM)}$ is bounded by data (cf. \eqref{eq:stability-bound-u-Linf-Lp}), we find that
\begin{equation}\label{eq:error-bound-2}
\begin{array}{l}
\ds \|\bbeta_{\bu}(t)\|^2_{0,\Omega} + \int^t_0 \Big( \frac{1}{\nu}\,\|\bbeta^\rd_{\bsi}\|^2_{0,\Omega} + \tF\,C_{\rp}\,\|\bbeta_{\bu}\|^{\rp}_{\bM} + \alpha\,\|\bbeta_{\bu}\|^2_{0,\Omega} \Big)\,ds  \\ [3ex]
\ds\quad\,\leq\, C_1\,\Bigg( \int^t_0 \Big(\|\bdelta_{\bsi}\|^2_{0,\Omega} + \|\bdelta_{\bu}\|^{\rp}_{\bM} + \|\bdelta_{\bu}\|^{\rq}_{\bM} + \|\bdelta_{\bu}\|^2_{0,\Omega} \Big)\,ds + \|\bbeta_{\bu}(0)\|^2_{0,\Omega} \Bigg),
\end{array}
\end{equation}
with $C_1 > 0$ depending only on $|\Omega|, \nu, \alpha, \tF, \rp$, and data. 
Next, from the discrete inf-sup condition of $\cB$ (cf. \eqref{eq:discrete-inf-sup-operator-B}) and the first equation of \eqref{eq:error-system}, we get
\begin{equation*}
\wt{\beta}\,\|\bbeta_{\bu}\|_{\bM} 
\,\leq\, \frac{1}{\nu}\,\|\bbeta^{\rd}_{\bsi}\|_{0,\Omega} + \big( \frac{1}{\nu}\,\|\bdelta^{\rd}_{\bsi}\|_{0,\Omega} + \|\bdelta_{\bu}\|_{\bM} \big),
\end{equation*}
and then
\begin{equation}\label{eq:discrete-inf-sup-bbeta-bdelta}
\frac{\wt{\beta}^2\,\nu}{4}\,\|\bbeta_{\bu}\|^2_{\bM} 
\,\leq\, \frac{1}{2\,\nu}\,\|\bbeta^{\rd}_{\bsi}\|^2_{0,\Omega} + \frac{1}{\nu}\,\|\bdelta_{\bsi}\|^2_{0,\Omega} + \nu\,\|\bdelta_{\bu}\|^2_{\bM}.
\end{equation}
Hence, integrating \eqref{eq:discrete-inf-sup-bbeta-bdelta} from $0$ to $t\in (0,T]$, adding with \eqref{eq:error-bound-2}, and neglecting $\|\bbeta_{\bu}\|^{\rp}_{\bM}$ to obtain a simplified error estimate, yields
\begin{equation}\label{eq:new-error-bound-2}
\begin{array}{l}
\ds \|\bbeta_{\bu}(t)\|^2_{0,\Omega} + \int^t_0 \Big( \|\bbeta^\rd_{\bsi}\|^2_{0,\Omega} + \|\bbeta_{\bu}\|^{2}_{\bM} + \|\bbeta_{\bu}\|^2_{0,\Omega} \Big)\,ds \\ [3ex] 
\ds\quad\,\leq\, C_2\,\Bigg(\int^t_0 \Big( \|\bdelta_{\bsi}\|^2_{0,\Omega} + \|\bdelta_{\bu}\|^{\rp}_{\bM} + \|\bdelta_{\bu}\|^{\rq}_{\bM} + \|\bdelta_{\bu}\|^2_{\bM} \Big)\,ds  + \|\bbeta_{\bu}(0)\|^2_{0,\Omega} \Bigg),
\end{array}
\end{equation}
with $C_2 > 0$ depending only on $|\Omega|, \nu, \alpha, \tF, \wt{\beta}$, and data.

Next, in order to bound the last term in \eqref{eq:new-error-bound-2}, we subtract the continuous and discrete initial condition problems \eqref{eq:initial-condition-problem} and \eqref{eq:discrete-initial-condition-problem}, to obtain the error system:
\begin{equation*}
\begin{array}{rll}
[\cA(\bsi(0) - \bsi_h(0)),\btau_h] + [\cB(\btau_h),\bu(0) - \bu_h(0)] & = & 0 \quad \forall\,\btau_h\in \bbX_{0,h}, \\ [2ex]
-\,[\cB(\bsi(0) - \bsi_h(0)),\bv_h] + [\cC(\bu(0)) - \cC(\bu_h(0)),\bv_h] & = & 0 \quad \forall\,\bv_h\in \bM_h.
\end{array}
\end{equation*} 
Then, using similar arguments above (cf. \eqref{eq:error-bound-1c}), allow us to get
\begin{equation}\label{eq:data-0-bound}
\|\bbeta^\rd_{\bsi}(0)\|^2_{0,\Omega} + \|\bbeta_{\bu}(0)\|^{\rp}_{\bM} + \|\bbeta_{\bu}(0)\|^2_{0,\Omega} 
\,\leq\, C_{0}\,\Big( \|\bdelta_{\bsi}(0)\|^2_{0,\Omega} + \|\bdelta_{\bu}(0)\|^{\rp}_{\bM} + \|\bdelta_{\bu}(0)\|^{\rq}_{\bM} + \|\bdelta_{\bu}(0)\|^2_{\bM} \Big),
\end{equation}
with $C_{0}>0$ depending only on $|\Omega|, \nu, \tF, \alpha$.
Thus, combining \eqref{eq:data-0-bound} with \eqref{eq:new-error-bound-2}, and using the error decompositions \eqref{eq:error-decomposition}, yields
\begin{equation}\label{eq:new-error-bound-3}
\begin{array}{l}
\ds \|\be_{\bu}(t)\|^2_{0,\Omega} + \int^t_0 \Big(\|\be^{\rd}_{\bsi}\|^2_{0,\Omega} + \|\be_{\bu}\|^{2}_{\bM}\Big)\,ds
\,\leq\, C\,\Bigg( \|\bdelta_{\bu}(t)\|^{2}_{0,\Omega} + \int^t_0 \Big( \|\bdelta_{\bsi}\|^2_{0,\Omega} + \|\bdelta_{\bu}\|^{\rp}_{\bM} \Big)\,ds \\ [3ex] 
\ds\quad\,+\,\,\int^t_0\Big( \|\bdelta_{\bu}\|^{\rq}_{\bM} + \|\bdelta_{\bu}\|^2_{\bM} \Big)\,ds
+ \|\bdelta_{\bsi}(0)\|^2_{0,\Omega} + \|\bdelta_{\bu}(0)\|^{\rp}_{\bM} + \|\bdelta_{\bu}(0)\|^{\rq}_{\bM} + \|\bdelta_{\bu}(0)\|^2_{\bM} \Bigg),
\end{array}
\end{equation}
which implies \eqref{eq:error-estimate}.

On the other hand, in order to obtain \eqref{eq:full-error-estimate}, we need to bound the norm $\|\bdiv(\bbeta_{\bsi})\|_{\bL^\rq(\Omega)}$.
To that end, from the second equation of \eqref{eq:error-system} we have the identity
\begin{equation*}
[\cB(\bbeta_{\bsi}),\bv_h] \,=\, (\partial_t\,\bbeta_{\bu},\bv_h)_{\Omega} + [\cC(\bu) - \cC(\bu_h),\bv_h] + (\partial_t\,\bdelta_{\bu},\bv_h)_{\Omega} - [\cB(\bdelta_{\bsi}),\bv_h] \quad \forall\,\bv_h\in \bM_h,
\end{equation*}
where the last two terms are zero thanks to the projection properties \eqref{eq:Pk-projection-property}--\eqref{eq:Pik-interpolator},
which together with the continuity bound of $\cC$ (cf. \eqref{eq:properties-operator-C-2}) and the fact that $\bdiv(\bbX_0) = \bM'$, implies
\begin{equation*}
\|\bdiv(\bbeta_{\bsi})\|_{\bL^{\rq}(\Omega)} 
\,\leq\, |\Omega|^{(\rp-2)/(2\rp)}\,\|\partial_t\,\bbeta_{\bu}\|_{0,\Omega} + \Big( \alpha\,|\Omega|^{(\rp-2)/\rp} + \tF\,c_\rp\,\big( \|\bu\|_{\bM} + \|\bu_h\|_{\bM} \big)^{\rp-2}\, \Big)\, \|\be_{\bu}\|_{\bM}.
\end{equation*}
Then, taking square in the above inequality, integrating from $0$ to $t\in (0,T]$, recalling that both $\|\bu\|_{\L^\infty(0,T;\bM)}$ and $\|\bu_h\|_{\L^\infty(0,T;\bM)}$ are bounded by data (cf. \eqref{eq:stability-bound-u-Linf-Lp}, \eqref{eq:discrete-stability-bound-u-Linf-Lp}), and employing \eqref{eq:new-error-bound-3}, we deduce that
\begin{equation}\label{eq:error-bound-3}
\begin{array}{l}
\ds \int^t_0 \|\bdiv(\bbeta_{\bsi})\|^{2}_{\bL^{\rq}(\Omega)}\,ds
\,\leq\, 2\,|\Omega|^{(\rp-2)/\rp}\,\int^t_0 \|\partial_t\,\bbeta_{\bu}\|^2_{0,\Omega}\,ds + C_3\,\Bigg( \|\bdelta_{\bu}(t)\|^{2}_{0,\Omega} + \int^t_0 \|\bdelta_{\bsi}\|^2_{0,\Omega}\,ds \\ [3ex] 
\ds\quad\,+\,\,\int^t_0\Big( \|\bdelta_{\bu}\|^{\rp}_{\bM} + \|\bdelta_{\bu}\|^{\rq}_{\bM} + \|\bdelta_{\bu}\|^2_{\bM} \Big)\,ds
+ \|\bdelta_{\bsi}(0)\|^2_{0,\Omega} + \|\bdelta_{\bu}(0)\|^{\rp}_{\bM} + \|\bdelta_{\bu}(0)\|^{\rq}_{\bM} + \|\bdelta_{\bu}(0)\|^2_{\bM} \Bigg),
\end{array}
\end{equation}
with $C_3 > 0$ depending only on $|\Omega|, \nu, \alpha, \tF, \wt{\beta}$, and data.
Then, combining \eqref{eq:new-error-bound-2} and \eqref{eq:error-bound-3}, and employing \eqref{eq:deviatoric-inequality}, we get
\begin{equation}\label{eq:error-bound-4}
\begin{array}{l}
\ds \int^t_0 \|\bbeta_{\bsi}\|^2_{\bbX}\,ds
\,\leq\, \frac{2\,|\Omega|^{(\rp-2)/\rp}}{\wt{C}_{\rd}}\,\int^t_0 \|\partial_t\,\bbeta_{\bu}\|^2_{0,\Omega}\,ds + \frac{(C_2 + C_3)}{\wt{C}_{\rd}}\,\Bigg( \|\bdelta_{\bu}(t)\|^{2}_{0,\Omega} + \int^t_0 \|\bdelta_{\bsi}\|^2_{0,\Omega}\,ds \\ [3ex] 
\ds\quad\,+\,\,\int^t_0\Big( \|\bdelta_{\bu}\|^{\rp}_{\bM} + \|\bdelta_{\bu}\|^{\rq}_{\bM} + \|\bdelta_{\bu}\|^2_{\bM} \Big)\,ds
+ \|\bdelta_{\bsi}(0)\|^2_{0,\Omega} + \|\bdelta_{\bu}(0)\|^{\rp}_{\bM} + \|\bdelta_{\bu}(0)\|^{\rq}_{\bM} + \|\bdelta_{\bu}(0)\|^2_{\bM} \Bigg),
\end{array}
\end{equation}
where $\wt{C}_{\rd} := \min\big\{1,C_{\rd}\big\}/2$.
Next, in order to bound the first term in the right-hand side of \eqref{eq:error-bound-4}, we differentiate in time the first equation of \eqref{eq:error-system}, and choose $(\btau_h, \bv_h) = (\bbeta_{\bsi}, \partial_t\,\bbeta_{\bu})$ in \eqref{eq:error-system}, to find that
\begin{equation*}
\begin{array}{l}
\ds \frac{1}{2}\,\partial_t\,\Big( \frac{1}{\nu}\,\|\bbeta^\rd_{\bsi}\|^2_{0,\Omega} + \alpha\,\|\bbeta_{\bu}\|^2_{0,\Omega} \Big) + \|\partial_t\,\bbeta_{\bu}\|^2_{0,\Omega} \\ [2ex] 
\ds\quad\,=\,-\,[\cA(\partial_t\,\bdelta_{\bsi}),\bbeta_{\bsi}] - [\cC(\bu) - \cC(\bu_h),\partial_t\,\bbeta_{\bu}] - (\partial_t\,\bdelta_{\bu},\partial_t\,\bbeta_{\bu})_{\Omega} + [\cB(\bdelta_{\bsi}),\partial_t\,\bbeta_{\bu}] - [\cB(\bbeta_{\bsi}),\partial_t\,\bdelta_{\bu}],
\end{array}
\end{equation*}
where the last three terms are zero thanks to the projection properties \eqref{eq:Pk-projection-property}--\eqref{eq:Pik-interpolator}. 
Thus, employing Cauchy--Schwarz, H\"older and Young's inequalities, the continuity bound of $\cC$ (cf. \eqref{eq:properties-operator-C-2}), \eqref{eq:ab-m-power-inequality}, and the fact that $\|\partial_t\,\bbeta_{\bu}\|_{\bM} \leq c h^{-n(\rp - 2)/(2\rp)}\,\|\partial_t\,\bbeta_{\bu}\|_{0,\Omega}$, with $\bbeta_{\bu}\in \bM_h$, which follows from the global inverse inequality  \cite[Corollary~1.141]{Ern-Guermond}, we obtain
\begin{equation*}
\begin{array}{l}
\ds \frac{1}{2}\,\partial_t\,\Big( \frac{1}{\nu}\,\|\bbeta^\rd_{\bsi}\|^2_{0,\Omega} + \alpha\,\|\bbeta_{\bu}\|^2_{0,\Omega} \Big) + \|\partial_t\,\bbeta_{\bu}\|^2_{0,\Omega} \\ [2ex]
\ds\quad\,\leq\, \frac{1}{\nu}\,\|\partial_t\,\bdelta_{\bsi}\|_{0,\Omega}\,\|\bbeta_{\bsi}\|_{0,\Omega} + \|\cC(\bu) - \cC(\bu_h)\|_{\bM'}\,\|\partial_t\,\bbeta_{\bu}\|_{\bM}  \\ [2ex] 
\ds\quad\,\leq\, \frac{1}{\nu}\,\|\partial_t\,\bdelta_{\bsi}\|_{0,\Omega}\,\|\bbeta_{\bsi}\|_{0,\Omega} +  \wt{C}_4\,h^{-n(\rp - 2)/(2\rp)}\,\Big( 1 + \|\bu\|^{\rp-2}_{\bM} + \|\bu_h\|^{\rp-2}_{\bM} \Big)\,\|\be_{\bu}\|_{\bM}\,\|\partial_t\bbeta_{\bu}\|_{0,\Omega}  \\ [2ex]
\ds\quad\,\leq\, C_4\,h^{-n(\rp-2)/\rp}\,\Big( 1 + \|\bu\|^{2(\rp-2)}_{\bM} + \|\bu_h\|^{2(\rp-2)}_{\bM} \Big)\,\Big( \|\partial_t\,\bdelta_{\bsi}\|^2_{0,\Omega} + \|\be_{\bu}\|^2_{\bM} \Big) + \frac{1}{2}\,\|\partial_t\,\bbeta_{\bu}\|^2_{0,\Omega} + \frac{\delta}{2}\,\|\bbeta_{\bsi}\|^2_{\bbX},
\end{array}
\end{equation*}
with $C_4 > 0$ depending on $|\Omega|, \nu, \alpha, \tF$.
Thus, integrating from $0$ to $t\in (0,T]$, we find in particular that
\begin{equation*}
\begin{array}{l}
\ds \int^t_0 \|\partial_t\,\bbeta_{\bu}\|^2_{0,\Omega}\,ds
\,\leq\, 2\,C_4\,h^{-n(\rp-2)/\rp}\,\Big( 1 + \|\bu\|^{2(\rp-2)}_{\L^\infty(0,T;\bM)} + \|\bu_h\|^{2(\rp-2)}_{\L^\infty(0,T;\bM)} \Big) \\ [3ex] 
\ds\quad\,\times\,\,\Bigg( \|\bdelta_{\bu}(t)\|^{2}_{0,\Omega} + \int^t_0 \Big( \|\partial_t\,\bdelta_{\bsi}\|^2_{0,\Omega} + \|\bdelta_{\bsi}\|^2_{0,\Omega} + \|\bdelta_{\bu}\|^{\rp}_{\bM} + \|\bdelta_{\bu}\|^{\rq}_{\bM} + \|\bdelta_{\bu}\|^2_{\bM} \Big)\,ds \\ [3ex] 
\ds\quad\,+\,\,\|\bdelta_{\bsi}(0)\|^2_{0,\Omega} + \|\bdelta_{\bu}(0)\|^{\rp}_{\bM} + \|\bdelta_{\bu}(0)\|^{\rq}_{\bM} + \|\bdelta_{\bu}(0)\|^2_{\bM} \Bigg) + \delta \int^t_0 \|\bbeta_{\bsi}\|^2_{\bbX}\,ds,
\end{array}
\end{equation*}
which combined with \eqref{eq:error-bound-4}, the fact that both $\|\bu\|_{\L^\infty(0,T;\bM)}$ and $\|\bu_h\|_{\L^\infty(0,T;\bM)}$ are bounded by data (cf. \eqref{eq:stability-bound-u-Linf-Lp}, \eqref{eq:discrete-stability-bound-u-Linf-Lp}), and taking $\delta = \wt{C}_{\rd}/(4\,|\Omega|^{(\rp-2)/\rp})$, allow us to deduce
\begin{equation}\label{eq:error-bound-5}
\begin{array}{l}
\ds \int^t_0 \|\bbeta_{\bsi}\|^2_{\bbX}  
\,\leq\, C_5\,h^{-n(\rp-2)/\rp}\,\Bigg( \|\bdelta_{\bu}(t)\|^{2}_{0,\Omega} + \int^t_0 \Big( \|\partial_t\,\bdelta_{\bsi}\|^2_{0,\Omega} + \|\bdelta_{\bsi}\|^2_{0,\Omega} + \|\bdelta_{\bu}\|^{\rp}_{\bM} \Big)\,ds \\ [3ex]
\ds\quad\,+\,\,\int^t_0 \Big( \|\bdelta_{\bu}\|^{\rq}_{\bM} + \|\bdelta_{\bu}\|^2_{\bM} \Big)\,ds + \|\bdelta_{\bsi}(0)\|^2_{0,\Omega} + \|\bdelta_{\bu}(0)\|^{\rp}_{\bM} + \|\bdelta_{\bu}(0)\|^{\rq}_{\bM} + \|\bdelta_{\bu}(0)\|^2_{\bM} \Bigg),
\end{array}
\end{equation}
with $C_5 > 0$ depending only on $|\Omega|, \nu, \alpha, \tF, \wt{\beta}$, and data.
Thus, using the error decomposition \eqref{eq:error-decomposition} in combination with \eqref{eq:error-bound-5}, we obtain \eqref{eq:full-error-estimate} and complete the proof.
\end{proof}

\begin{rem}
The error bounds provided in Theorem~\ref{thm:error-estimates} can be
derived alternatively without the use of the discrete inf-sup
condition of $\cB$ (cf. \eqref{eq:discrete-inf-sup-operator-B}), but
in that case the expression in the right-hand side of
\eqref{eq:error-estimate}--\eqref{eq:full-error-estimate} will be more
complicated.  We also note that in the steady state
  case of \eqref{eq:Brinkman-Forchheimer-2} the error estimate
  \eqref{eq:full-error-estimate} does not include the term
  $h^{-n(\rp-2)/(2\rp)}$ because the global inverse inequality is not
  necessary to bound $\|\bdiv(\bbeta_{\bsi})\|_{\bL^\rq(\Omega)}$.
\end{rem}

Now, in order to obtain theoretical rates of convergence for the discrete scheme \eqref{eq:discrete-weak-Brinkman-Forchheimer}, we recall the approximation properties of the subspaces involved (see \cite[Section~5.5]{cgm2019-pp} and \cite[Section~4.2.1]{cgo2018-pp}).
Note that each one of them is named after the unknown to which it is applied later on.

\bigskip

\noindent $(\mathbf{AP}^{\bsi}_h)$ For each $l\in (0,k+1]$ and for each
$\btau\in \bbH^{l}(\Omega)\cap \bbX_0$ with $\bdiv\btau\in \bW^{l,\rq}(\Omega)$, there holds
\begin{equation*}
\|\btau - \bPi^k_h(\btau)\|_{\bbX} 
\,\leq\, C\,h^{l}\,\Big\{ \|\btau\|_{\bbH^{l}(\Omega)} + \|\bdiv\btau\|_{\bW^{l,\rq}(\Omega)} \Big\}.
\end{equation*}

\noindent $(\mathbf{AP}^{\bu}_h)$ For each $l\in [0,k+1]$ and for each $\bv\in \bW^{l,\rp}(\Omega)$, there holds
\begin{equation*}
\|\bv - \bP^k_h(\bv)\|_{\bM} 
\,\leq\, C\,h^{l}\,\|\bv\|_{\bW^{l,\rp}(\Omega)}.
\end{equation*}

It follows that there exist positive constants $C(\partial_t\,\bsi), C(\bsi)$ and $C(\bu)$, depending on the extra regularity assumptions for $\bsi$ and $\bu$, respectively, and whose explicit expressions are obtained from the right-hand side of the foregoing approximation properties, such that
\begin{equation*}
\|\partial_t\,\bdelta_{\bsi}\|_{\bbX} \,\leq\, C(\partial_t\,\bsi)\,h^{l}, \quad
\|\bdelta_{\bsi}\|_{\bbX} \,\leq\, C(\bsi)\,h^{l}, \qan
\|\bdelta_{\bu}\|_{\bM} \,\leq\, C(\bu)\,h^{l}.
\end{equation*}
Then, we establish the theoretical rate of convergence of the discrete scheme \eqref{eq:discrete-weak-Brinkman-Forchheimer}.
Notice that, at least sub-optimal rates of convergences of order $\cO(h^{l\,\rq/2})$ and $\cO(h^{l\,\rq/2 - n(\rp-2)/(2\rp)})$ are confirmed.
\begin{thm}\label{thm:rate-of-convergence}
Let $(\bsi(t),\bu(t))\in \L^{\infty}(0,T;\bbX_0)\times \W^{1,\infty}(0,T;\bL^2(\Omega))\cap \L^\infty(0,T;\bM)$ and $(\bsi_h(t), \bu_h(t))\in \L^{\infty}(0,T;\bbX_{0,h})\times \W^{1,\infty}(0,T;\bM_h)$ be the unique solutions of the continuous and semidiscrete problems \eqref{eq:weak-Brinkman-Forchheimer} and \eqref{eq:discrete-weak-Brinkman-Forchheimer}, respectively.
Assume further that there exists $l\in (0,k+1]$, such that  $\bsi\in \bbH^{l}(\Omega), \bdiv\bsi\in \bW^{l,\rq}(\Omega)$ and $\bu\in \bW^{l,\rp}(\Omega)$, with $\rp\in [3,4]$ and $1/\rp + 1/\rq = 1$.
Then, there exist $C_1(\bsi,\bu), C_2(\bsi,\bu) > 0$ depending only on
$C(\partial_t\,\bsi), C(\bsi), C(\bu), |\Omega|, \nu, \alpha, \tF, \wt{\beta}, \rp$, and data, such that
\begin{equation*}
\|\be^{\rd}_{\bsi}\|_{\L^2(0,T;\bbL^2(\Omega))} + \|\be_{\bu}\|_{\L^{2}(0,T;\bM)} + \|\be_{\bu}\|_{\L^\infty(0,T;\bL^2(\Omega))} 
\,\leq\, C_1(\bsi,\bu)\,\Big( h^{l\,\rq/2} + h^{l} + h^{l\,\rp/2} \Big)
\end{equation*}
and
\begin{equation*}
\|\be_{\bsi}\|_{\L^2(0,T;\bbX)} \,\leq\, C_2(\bsi,\bu)\,h^{-n(\rp-2)/(2\rp)}\,\Big( h^{l\,\rq/2} + h^{l} + h^{l\,\rp/2} \Big).
\end{equation*}
\end{thm}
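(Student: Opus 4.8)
The plan is to obtain both bounds by inserting the approximation properties $(\mathbf{AP}^{\bsi}_h)$ and $(\mathbf{AP}^{\bu}_h)$ into the abstract error estimates \eqref{eq:error-estimate} and \eqref{eq:full-error-estimate} already furnished by Theorem~\ref{thm:error-estimates}; no new analytic ingredient is needed, so this is essentially a corollary of that theorem. First I would check that the regularity hypotheses make the interpolation operator applicable: the assumption $\bsi\in\bbH^{l}(\Omega)$ with $\bdiv\bsi\in\bW^{l,\rq}(\Omega)$ places $\bsi(t)$ in the domain $\bbH_{\rs}$ of $\bPi^k_h$ (cf. \eqref{Hs-defn}) for a.e.\ $t$, so that $\bdelta_{\bsi}=\bsi-\bPi^k_h(\bsi)$ and $\partial_t\,\bdelta_{\bsi}$ are controlled by $(\mathbf{AP}^{\bsi}_h)$, while $\bu(t)\in\bW^{l,\rp}(\Omega)$ makes $(\mathbf{AP}^{\bu}_h)$ available for $\bdelta_{\bu}=\bu-\bP^k_h(\bu)$. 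Absorbing the assumed time-regularity into the constants $C(\partial_t\,\bsi),C(\bsi),C(\bu)$, each pointwise-in-time projection error is then bounded by a constant times $h^{l}$, uniformly on $[0,T]$.

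Next I would substitute these bounds term by term into \eqref{eq:error-estimate}. The terms $\|\bdelta_{\bsi}\|_{\L^2(0,T;\bbX)}$, $\|\bdelta_{\bsi}(0)\|_{\bbX}$, $\|\bdelta_{\bu}\|_{\L^2(0,T;\bM)}$, $\|\bdelta_{\bu}\|_{\L^\infty(0,T;\bM)}$ and $\|\bdelta_{\bu}(0)\|_{\bM}$ each contribute order $h^{l}$; the two terms carrying the exponent $\rp/2$, namely $\|\bdelta_{\bu}\|^{\rp/2}_{\L^2(0,T;\bM)}$ and $\|\bdelta_{\bu}(0)\|^{\rp/2}_{\bM}$, contribute order $h^{l\rp/2}$; and the two terms with exponent $\rq/2$ contribute order $h^{l\rq/2}$. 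Collecting these yields the factor $h^{l\rq/2}+h^{l}+h^{l\rp/2}$ and hence the first estimate, with $C_1(\bsi,\bu)$ depending on $C(\bsi),C(\bu)$ and the stated data.

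For the second bound I would repeat the substitution in \eqref{eq:full-error-estimate}, where the only additional term $\|\partial_t\,\bdelta_{\bsi}\|_{\L^2(0,T;\bbX)}$ is likewise of order $h^{l}$ by $(\mathbf{AP}^{\bsi}_h)$ applied to $\partial_t\,\bsi$, and the remaining terms are exactly those treated above. Since the prefactor $h^{-n(\rp-2)/(2\rp)}$ is untouched by the substitution, one obtains $\|\be_{\bsi}\|_{\L^2(0,T;\bbX)}\le C_2(\bsi,\bu)\,h^{-n(\rp-2)/(2\rp)}\,(h^{l\rq/2}+h^{l}+h^{l\rp/2})$, which is the claimed estimate.

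The only point requiring a little care, and the closest thing to an obstacle, is the bookkeeping of the time norms against the fractional powers: one must use that a uniform-in-time bound $\|\bdelta_{\bu}(t)\|_{\bM}\le C(\bu)\,h^{l}$ gives $\|\bdelta_{\bu}\|_{\L^2(0,T;\bM)}\le C(\bu)\sqrt{T}\,h^{l}$, after which raising to the power $\rp/2$ (respectively $\rq/2$) produces $h^{l\rp/2}$ (respectively $h^{l\rq/2}$) rather than some mixed exponent. Since $\rp\in[3,4]$ forces $\rq\in(1,2)$, one has $l\rq/2<l<l\rp/2$, so the dominant contribution as $h\to 0$ is $h^{l\rq/2}$, which accounts for the sub-optimal rates $\cO(h^{l\rq/2})$ and $\cO(h^{l\rq/2-n(\rp-2)/(2\rp)})$ anticipated just before the statement.
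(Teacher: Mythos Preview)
Your proposal is correct and follows exactly the approach of the paper, which simply states that it suffices to apply Theorem~\ref{thm:error-estimates} together with the approximation properties $(\mathbf{AP}^{\bsi}_h)$ and $(\mathbf{AP}^{\bu}_h)$ and omits further details. In fact, your term-by-term bookkeeping of the exponents $\rp/2$, $1$, and $\rq/2$ spells out precisely what the paper leaves implicit.
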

\begin{proof}
It suffices to apply Theorem~\ref{thm:error-estimates} and the approximation properties of the discrete subspaces. 
We omit further details.
\end{proof}


\subsection{Computing other variables of interest}

In this section we introduce suitable approximations for other variables of interest, such that the pressure $p$ and the velocity gradient $\bGu := \nabla\bu$, both written in terms of the solution of the semidiscrete continuous-in-time problem \eqref{eq:discrete-weak-Brinkman-Forchheimer}.
In fact, observing that at the continuous level there hold
\begin{equation*}
p \,=\, -\frac{1}{n}\,\tr(\bsi) \qan
\bGu \,=\, \frac{1}{\nu}\,\bsi^{\rd},
\end{equation*}
provided the discrete solution $(\bsi_h,\bu_h)\in \bbX_{0,h}\times \bM_h$ of problem \eqref{eq:discrete-weak-Brinkman-Forchheimer}, we propose the following approximations for the aforementioned variables:
\begin{equation}\label{eq:discrete-ph-Gh}
p_h \,=\, -\frac{1}{n}\,\tr(\bsi_h) \qan
\bGu_h \,=\, \frac{1}{\nu}\,\bsi^{\rd}_h.
\end{equation}
Then, we define the corresponding errors $\be_p = p - p_h$ and $\be_{\bGu} = \bGu - \bGu_h$.
The following result, whose proof follows directly from Theorem~\ref{thm:rate-of-convergence}, establishes the corresponding approximation result for this post-processing procedure.
\begin{cor}\label{cor:rate-of-convergence-p-Gu}
Let the assumptions of Theorem~\ref{thm:rate-of-convergence} hold.
Let $p_h$ and $\bGu_h$ given by \eqref{eq:discrete-ph-Gh}.
Then, there exist $\wt{C}_1(\bsi,\bu), \wt{C}_2(\bsi,\bu) > 0$ depending only on $C(\partial_t\,\bsi), C(\bsi), C(\bu), |\Omega|, \nu, \alpha, \tF, \wt{\beta}, \rp$, and data, such that
\begin{equation*}
\|\be_{\bGu}\|_{\L^2(0,T;\bbL^2(\Omega))}
\,\leq\, \wt{C}_1(\bsi,\bu)\,\Big( h^{l\,\rq/2} + h^{l} + h^{l\,\rp/2} \Big)
\end{equation*}
and
\begin{equation*}
\|\be_p\|_{\L^2(0,T;\L^2(\Omega))} 
\,\leq\, \wt{C}_2(\bsi,\bu)\,h^{-n(\rp-2)/(2\rp)}\,\Big( h^{l\,\rq/2} + h^{l} + h^{l\,\rp/2} \Big).
\end{equation*}
\end{cor}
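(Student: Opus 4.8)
The plan is to observe that neither post-processed quantity requires any new analysis: both errors are explicit pointwise-in-space (and in time) algebraic functions of the pseudostress error $\be_{\bsi} = \bsi - \bsi_h$, so that the two estimates of Theorem~\ref{thm:rate-of-convergence} can be invoked directly. First I would subtract the continuous identities $p = -\frac{1}{n}\,\tr(\bsi)$ and $\bGu = \frac{1}{\nu}\,\bsi^{\rd}$ from their discrete counterparts in \eqref{eq:discrete-ph-Gh}, using the linearity of the trace and of the deviatoric operator, to obtain
\begin{equation*}
\be_p \,=\, -\frac{1}{n}\,\tr(\be_{\bsi}) \qan \be_{\bGu} \,=\, \frac{1}{\nu}\,\be^{\rd}_{\bsi}.
\end{equation*}

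For the velocity gradient, taking $\bbL^2(\Omega)$ norms and integrating in time gives $\|\be_{\bGu}\|_{\L^2(0,T;\bbL^2(\Omega))} = \frac{1}{\nu}\,\|\be^{\rd}_{\bsi}\|_{\L^2(0,T;\bbL^2(\Omega))}$, which is exactly the quantity controlled by the first estimate of Theorem~\ref{thm:rate-of-convergence}. This yields the clean rate $h^{l\,\rq/2} + h^{l} + h^{l\,\rp/2}$ with no inverse-inequality penalty, since only the deviatoric part $\be^{\rd}_{\bsi}$ enters. For the pressure, I would invoke the elementary pointwise inequality $|\tr(\btau)| \leq \sqrt{n}\,|\btau|$ (Cauchy--Schwarz applied to the $n$ diagonal entries), which gives $\|\be_p\|_{0,\Omega} \leq \frac{1}{\sqrt{n}}\,\|\be_{\bsi}\|_{0,\Omega} \leq \frac{1}{\sqrt{n}}\,\|\be_{\bsi}\|_{\bbX}$; integrating in time and applying the second estimate of Theorem~\ref{thm:rate-of-convergence} produces the stated bound, now carrying the factor $h^{-n(\rp-2)/(2\rp)}$ because the full $\bbX$-norm of $\be_{\bsi}$ is required.

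There is essentially no obstacle in this argument, consistent with the remark preceding the statement that the proof follows directly from Theorem~\ref{thm:rate-of-convergence}. The only point worth flagging is the structural reason the two rates differ: the velocity gradient is recovered from $\be^{\rd}_{\bsi}$ alone, which is controlled optimally through the coercivity of $\cA$ in Theorem~\ref{thm:error-estimates}, whereas the pressure depends on $\tr(\be_{\bsi})$, hence on the full pseudostress error, which is tied to $\|\bdiv\be_{\bsi}\|_{\bL^{\rq}(\Omega)}$ via \eqref{eq:deviatoric-inequality} and therefore inherits the inverse-inequality factor appearing in \eqref{eq:full-error-estimate}. I would then simply collect constants to define $\wt{C}_1(\bsi,\bu)$ and $\wt{C}_2(\bsi,\bu)$ from those of Theorem~\ref{thm:rate-of-convergence}, completing the proof.
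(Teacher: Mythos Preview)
Your proposal is correct and is precisely the argument the paper has in mind: the text preceding the corollary states that the proof follows directly from Theorem~\ref{thm:rate-of-convergence}, and your derivation—writing $\be_{\bGu} = \nu^{-1}\be^{\rd}_{\bsi}$ and $\be_p = -n^{-1}\tr(\be_{\bsi})$, then invoking the first and second estimates of that theorem respectively—is exactly how one fills in the details. Your explanation of why the two rates differ (deviatoric part versus full $\bbX$-norm via \eqref{eq:deviatoric-inequality}) is also accurate and adds useful insight the paper leaves implicit.
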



\section{Fully discrete approximation}\label{sec:fully-discrete-formulation}

In this section we introduce and analyze a fully discrete approximation of \eqref{eq:weak-Brinkman-Forchheimer} (cf. \eqref{eq:discrete-weak-Brinkman-Forchheimer}).
To that end, for the time discretization we employ the backward Euler method.
Let $\Delta\,t$ be the time step, $T = N\,\Delta\,t$, and let $t_n = n\,\Delta\,t$, $n=0,\dots,N$.
Let $d_{t}\,u^n := (\Delta\,t)^{-1}(u^n - u^{n-1})$ be the first order (backward) discrete time derivative, where $u^n := u(t_n)$.
Then the fully discrete method reads:
given $(\bsi^0_h,\bu^0_h) = (\bsi_h(0),\bu_h(0))$ satisfying \eqref{eq:discrete-initial-condition-problem}, find $(\bsi^n_h, \bu^n_h)\in \bbX_{0,h}\times \bM_h$, $n=1,\dots,N$, such that
\begin{equation}\label{eq:fully-discrete-weak-Brinkman-Forchheimer}
\begin{array}{rlll}
\ds [\cA(\bsi^n_h),\btau_h] + [\cB(\btau_h),\bu^n_h] & = & 0 & \forall\, \btau_h\in \bbX_{0,h}, \\ [3ex]
\ds (d_{t}\,\bu^n_h,\bv_h)_{\Omega} - [\cB(\bsi^n_h),\bv_h] + [\cC(\bu^n_h),\bv_h] & = & (\f^n,\bv_h)_{\Omega} & \forall\,\bv_h\in \bM_h.
\end{array}
\end{equation}
Moreover, given a separable Banach space $\V$ endowed with the norm $\|\cdot\|_{\V}$, we introduce the discrete-in-time norms
\begin{equation*}
\|u\|^{\rp}_{\ell^{\rp}(0,T;\V)} \,:=\, \Delta\,t\,\sum^N_{n=1} \|u^n\|^{\rp}_{\V},\quad
\|u\|_{\ell^\infty(0,T;\V)} \,:=\, \max_{0\leq n \leq N}\,\|u^n\|_{\V}.
\end{equation*}

Next, we state the main results for method \eqref{eq:fully-discrete-weak-Brinkman-Forchheimer}.
\begin{thm}\label{thm:bound-stability-fully-discrete}
  For each $(\bsi^0_h,\bu^0_h) = (\bsi_h(0),\bu_h(0))$ satisfying
  \eqref{eq:discrete-initial-condition-problem} and 
$\f^n\in \bL^2(\Omega)$, $n=1,\dots,N$, there exists a unique
solution $(\bsi^n_h, \bu^n_h)\in \bbX_{0,h}\times \bM_h$
to \eqref{eq:fully-discrete-weak-Brinkman-Forchheimer}.
Moreover, assuming sufficient regularity of the data, there exists a
constant $C_{\hat\cF} > 0$ depending only on $|\Omega|, \nu, \alpha,
\tF, \wt{\beta}$, such that
\begin{equation}\label{eq:stability-bound-fd}
\|\bsi_h\|_{\ell^{2}(0,T;\bbX)} + \|\bu_h\|_{\ell^{2}(0,T;\bM)} + \|\bu_h\|_{\ell^\infty(0,T;\bL^2(\Omega))}
+ \Delta\,t\,\|d_{t}\,\bu_h\|_{\ell^2(0,T;\bL^2(\Omega))} 
\,\leq\, C_{\hat\cF}\,\hat\cF(\f,\bsi^0_h,\bu^0_h)
\end{equation}
with
\begin{equation*}
\begin{array}{l}
\ds \hat\cF(\f,\bsi^0_h,\bu^0_h) 
\,:=\, \|\f\|^{\rp-1}_{\ell^{2(\rp-1)}(0,T;\bL^2(\Omega))}
+ \|\f\|_{\ell^2(0,T;\bL^2(\Omega))} \\ [2ex] 
\ds\quad\,+\,\,\|(\bsi^0_h)^\rd\|_{0,\Omega} + \|\bu^0_h\|^{\rp/2}_{\bM} + \|\bu^0_h\|^{\rp-1}_{0,\Omega}
+ \|\bu^0_h\|_{0,\Omega}.
\end{array}
\end{equation*}
Moreover, with the constant $C_{\cG}$ from \eqref{eq:constant-CG-definition}, there holds
\begin{equation}\label{eq:stability-bound-u-linf-lp-fd}
\|\bu_h\|_{\ell^{\infty}(0,T;\bM)} 
\,\leq\, C_{\cG}\,\hat\cG(\f,\bsi^0_h,\bu^0_h)
\end{equation}
with
\begin{equation*}
\hat\cG(\f,\bsi^0_h,\bu^0_h) 
\,:=\, \|\f\|^{2/\rp}_{\ell^2(0,T;\bL^2(\Omega))} + \|(\bsi^0_h)^\rd\|^{2/\rp}_{0,\Omega} + \|\bu^0_h\|_{\bM} + \|\bu^0_h\|^{2/\rp}_{0,\Omega}.
\end{equation*}
\end{thm}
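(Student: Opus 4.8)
The plan is to transcribe the continuous arguments of Theorems~\ref{thm:well-posed-result} and~\ref{thm:stability-bound} into the backward Euler setting, replacing $\partial_t$ by the difference quotient $d_t$ and $\int_0^t$ by $\Delta t\sum_n$, and exploiting that the backward Euler identity
\begin{equation*}
(d_t\,w^n,w^n)_\Omega \,=\, \tfrac{1}{2}\,d_t\,\|w^n\|^2_{0,\Omega} + \tfrac{\Delta t}{2}\,\|d_t\,w^n\|^2_{0,\Omega}
\end{equation*}
generates extra non-negative terms. \emph{Existence and uniqueness} are handled one time level at a time. For fixed $n$, with $\bu^{n-1}_h$ known, \eqref{eq:fully-discrete-weak-Brinkman-Forchheimer} is the system $\cA\,\bsi^n_h + \cB'\bu^n_h = \0$, $-\cB\,\bsi^n_h + ((\Delta t)^{-1}\cE + \cC)\,\bu^n_h = \wh{\bG}^n$ in the dual spaces, where $[\wh{\bG}^n,\bv_h] := (\f^n,\bv_h)_\Omega + (\Delta t)^{-1}(\bu^{n-1}_h,\bv_h)_\Omega$ belongs to $\bM'_2$. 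This is precisely the resolvent system \eqref{eq:resolvent-system} with $\cE$ rescaled by $(\Delta t)^{-1}$, so the regularization-plus-Browder--Minty argument of Lemma~\ref{lem:resolvent-system}, carried out in the finite-dimensional subspaces and using $\bdiv(\bbX_{0,h}) = \bM_h$, yields a solution $(\bsi^n_h,\bu^n_h)\in \bbX_{0,h}\times\bM_h$; uniqueness follows by testing the difference of two solutions with itself and invoking the strict monotonicity bounds \eqref{eq:properties-operator-A} and \eqref{eq:properties-operator-C-3} together with \eqref{eq:deviatoric-inequality}. Induction on $n$ then produces the full discrete trajectory.

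For the \emph{basic energy estimate} I would test \eqref{eq:fully-discrete-weak-Brinkman-Forchheimer} with $(\btau_h,\bv_h) = (\bsi^n_h,\bu^n_h)$, apply the backward Euler identity to $(d_t\,\bu^n_h,\bu^n_h)_\Omega$ (discarding the non-negative remainder), and use the coercivity of $\cA$ and $\cC$ (cf.~\eqref{eq:properties-operator-A}, \eqref{eq:properties-operator-C-1}), the discrete inf-sup condition \eqref{eq:discrete-inf-sup-operator-B}, and Young's inequality, exactly as in the derivation of \eqref{eq:stability-bound-1}. Multiplying by $\Delta t$ and summing over $n$ telescopes $d_t\,\|\bu^n_h\|^2_{0,\Omega}$ and produces the discrete analogue of \eqref{eq:stability-bound-2}, i.e.\ the $\ell^\infty(0,T;\bL^2(\Omega))$ and $\ell^2(0,T;\bM)$ bounds together with control of $(\bsi^n_h)^\rd$ in $\ell^2$.

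To bound $d_t\,\bu_h$ and obtain \eqref{eq:stability-bound-u-linf-lp-fd}, I would reproduce the differentiation-in-time step leading to \eqref{eq:bound-ut-int-ut}: subtract the first equation of \eqref{eq:fully-discrete-weak-Brinkman-Forchheimer} at levels $n$ and $n-1$, divide by $\Delta t$, and test with $\bsi^n_h$, while testing the second equation with $d_t\,\bu^n_h$. Adding and applying the backward Euler identity to $(d_t(\bsi^n_h)^\rd,(\bsi^n_h)^\rd)_\Omega$ and to $\alpha\,(\bu^n_h,d_t\,\bu^n_h)_\Omega$ controls all the linear terms. The one genuinely non-trivial ingredient---and the main obstacle---is the Forchheimer term, for which the continuous chain rule $(|\bu|^{\rp-2}\bu,\partial_t\,\bu)_\Omega = \tfrac{1}{\rp}\partial_t\,\|\bu\|^\rp_\bM$ must be replaced by the discrete convexity inequality
\begin{equation*}
(|\bu^n_h|^{\rp-2}\bu^n_h,\bu^n_h - \bu^{n-1}_h)_\Omega \,\geq\, \tfrac{1}{\rp}\big(\|\bu^n_h\|^\rp_\bM - \|\bu^{n-1}_h\|^\rp_\bM\big),
\end{equation*}
which follows from the convexity of $\xi\mapsto \tfrac{1}{\rp}|\xi|^\rp$ and is exactly what makes $\|\bu^n_h\|^\rp_\bM$ telescope under summation. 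With it, summing against $\Delta t$ gives the discrete version of \eqref{eq:bound-ut-int-ut}, hence \eqref{eq:stability-bound-u-linf-lp-fd} with the same constant $C_\cG$ of \eqref{eq:constant-CG-definition}, together with a bound on $\Delta t\sum_n\|d_t\,\bu^n_h\|^2_{0,\Omega}$ (which in particular controls the small term $\Delta t\,\|d_t\,\bu_h\|_{\ell^2(0,T;\bL^2(\Omega))}$).

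Finally, to recover the full $\bbX$-norm I would use the second equation of \eqref{eq:fully-discrete-weak-Brinkman-Forchheimer} together with the surjectivity $\bdiv(\bbX_{0,h}) = \bM_h$ to bound $\|\bdiv\,\bsi^n_h\|_{\bL^\rq(\Omega)}$ by $\f^n$, $\cC(\bu^n_h)$ and $d_t\,\bu^n_h$ (using \eqref{eq:properties-operator-C-1}), then square, multiply by $\Delta t$ and sum, as in \eqref{eq:bound-int-div-bsi-1}, invoking the just-established bound on $\Delta t\sum_n\|d_t\,\bu^n_h\|^2_{0,\Omega}$. Combining this with the basic energy estimate and the deviatoric inequality \eqref{eq:deviatoric-inequality} yields $\|\bsi_h\|_{\ell^2(0,T;\bbX)}$ and collects all contributions into \eqref{eq:stability-bound-fd}. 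Since every manipulation is a line-by-line discrete transcription of the continuous proof, the resulting constants $C_{\hat\cF}$ and $C_\cG$ depend on the same quantities $|\Omega|,\nu,\alpha,\tF,\wt{\beta}$ as in the semidiscrete and continuous cases.
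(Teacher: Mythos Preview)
Your proposal is correct and follows essentially the same route as the paper: well-posedness at each step via the resolvent-system argument, the basic energy estimate from testing with $(\bsi^n_h,\bu^n_h)$ and the backward Euler identity, the $d_t\bu^n_h$ and $\ell^\infty(\bM)$ bounds from testing with $(\bsi^n_h,d_t\bu^n_h)$ after discretely differentiating the first equation, and the $\bdiv$-control from the second equation plus \eqref{eq:deviatoric-inequality}. The only cosmetic difference is that the paper derives the key Forchheimer inequality $(|\bu^n_h|^{\rp-2}\bu^n_h,\bu^n_h-\bu^{n-1}_h)_\Omega \geq \tfrac{1}{\rp}(\|\bu^n_h\|^\rp_\bM - \|\bu^{n-1}_h\|^\rp_\bM)$ via H\"older and Young's inequalities rather than invoking convexity of $|\cdot|^\rp/\rp$, but the two arguments are equivalent.
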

\begin{proof}
First, we note that at each time step the well-posedness of the fully discrete problem \eqref{eq:fully-discrete-weak-Brinkman-Forchheimer}, with $n=1,\dots,N$, follows from similar arguments to the proof of Lemma~\ref{lem:resolvent-system}.	

The proof of \eqref{eq:stability-bound-fd} and
\eqref{eq:stability-bound-u-linf-lp-fd} follows the structure of the proof of Theorem~\ref{thm:stability-bound}.	
We choose $(\btau_h,\bv_h) = (\bsi^n_h,\bu^n_h)$ in \eqref{eq:fully-discrete-weak-Brinkman-Forchheimer} and use the identity
\begin{equation}\label{eq:identity-discrete-time-derivative}
(d_{t}\,u^n_h,u^n_h)_{\Omega} 
\,=\, \frac{1}{2}\,d_{t}\,\|u^n_h\|^2_{0,\Omega} + \frac{1}{2}\,\Delta\,t\,\|d_{t}\,u^n_h\|^2_{0,\Omega},
\end{equation}
in combination with the coercivity bounds in \eqref{eq:properties-operator-A}--\eqref{eq:properties-operator-C-1} and Cauchy--Schwarz and Young's inequalities, to obtain the analogous estimate of \eqref{eq:preliminary-stability-bound}, that is
\begin{equation}\label{eq:preliminary-stability-bound-fd}
\frac{1}{2}\,d_{t}\,\|\bu^n_h\|^2_{0,\Omega} 
+ \frac{1}{2}\,\Delta\,t\,\|d_{t}\,\bu^n_h\|^2_{0,\Omega} 
+ \frac{1}{\nu}\,\|(\bsi^n_h)^\rd\|^2_{0,\Omega} 
+ \tF\,\|\bu^n_h\|^{\rp}_{\bM} + \alpha\,\|\bu^n_h\|^2_{0,\Omega}
\,\leq\, \frac{\delta}{2}\,\|\f^n\|^{2}_{0,\Omega} 
+ \frac{1}{2\,\delta}\,\|\bu^n_h\|^2_{0,\Omega}.
\end{equation}
In turn, from the discrete inf-sup condition of $\cB$ (cf. \eqref{eq:discrete-inf-sup-operator-B}), the first equation of \eqref{eq:fully-discrete-weak-Brinkman-Forchheimer}, and the continuity bound of $\cA$ (cf. \eqref{eq:properties-operator-A}), we deduce that
\begin{equation*}
\frac{\wt{\beta}^2\,\nu}{2}\,\|\bu^n_h\|^2_{\bM} 
\,\leq\, \frac{1}{2\,\nu}\,\|(\bsi^n_h)^\rd\|^2_{0,\Omega},
\end{equation*}
which combined with \eqref{eq:preliminary-stability-bound-fd} and choosing $\delta = 1/\alpha$,
yields
\begin{equation}\label{eq:stability-bound-1-fd}
\frac{1}{2}\,d_{t}\,\|\bu^n_h\|^2_{0,\Omega} 
+ \frac{1}{2}\,\Delta\,t\,\|d_{t}\,\bu^n_h\|^2_{0,\Omega} 
+ \frac{1}{2\,\nu}\,\|(\bsi^n_h)^\rd\|^2_{0,\Omega} 
+ \frac{\wt{\beta}\,\nu}{2}\,\|\bu^n_h\|^{2}_{\bM} 
+ \frac{\alpha}{2}\,\|\bu^n_h\|^2_{0,\Omega}
\,\leq\, \frac{1}{2\,\alpha}\,\|\f^n\|^{2}_{0,\Omega}. 
\end{equation}
Notice that, in order to simplify the stability bound, we have neglected the term $\|\bu^n_h\|^{\rp}_{\bM}$ in the left-hand side of \eqref{eq:preliminary-stability-bound-fd}.
Thus, summing up over the time index $n=1,\dots,N$ in \eqref{eq:stability-bound-1-fd} and multiplying by $\Delta\,t$, we get
\begin{equation}\label{eq:stability-bound-2-fd}
\begin{array}{l}
\ds \|\bu^N_h\|^2_{0,\Omega} 
+ (\Delta\,t)^2\,\sum^N_{n=1} \|d_{t}\,\bu^n_h\|^2_{0,\Omega} 
+ \Delta\,t\,\sum^N_{n=1} \Big( \|(\bsi^n_h)^\rd\|^2_{0,\Omega} + \|\bu^n_h\|^{2}_{\bM} + \|\bu^n_h\|^{2}_{0,\Omega} \Big)  \\ [3ex]
\ds\quad\,\leq\, C_1\,\Bigg( \Delta\,t\,\sum^N_{n=1} \|\f^n\|^{2}_{0,\Omega} + \|\bu^0_h\|^2_{0,\Omega} \Bigg),
\end{array}
\end{equation}
with $C_1>0$ depending only on $\nu, \alpha$, and $\wt{\beta}$.
	
On the other hand, from the second equation of \eqref{eq:fully-discrete-weak-Brinkman-Forchheimer}, we have the identity
\begin{equation*}
[\cB(\bsi^n_h),\bv_h] 
\,=\, -\,(\f^n,\bv_h)_{\Omega} + [\cC(\bu^n_h),\bv_h] + (d_{t}\,\bu^n_h,\bv_h)_{\Omega} \quad \forall\,\bv_h\in \bM_h.
\end{equation*}
Then, employing the property $\bdiv(\bbX_0) = \bM'$ in combination with \eqref{eq:properties-operator-C-1} and using similar arguments to \eqref{eq:stability-bound-2-fd}, we deduce the analogous estimate of \eqref{eq:bound-int-div-bsi-1}, that is
\begin{equation}\label{eq:bound-int-div-bsi-1-fd}
\begin{array}{l}
\ds \Delta\,t\,\sum^N_{n=1} \|\bdiv\bsi^n_h\|^2_{\bL^{\rq}(\Omega)} \\ [3ex]
\ds\quad\,\leq\, C_2\,\Bigg( \Delta\,t \sum^N_{n=1} \Big( \|\f^n\|^{2(\rp-1)}_{0,\Omega} + \|\f^n\|^2_{0,\Omega} \Big) 
+ \|\bu^0_h\|^{2(\rp-1)}_{0,\Omega} + \|\bu^0_h\|^2_{0,\Omega} 
+ \Delta\,t \sum^N_{n=1} \|d_{t}\,\bu^n_h\|^2_{0,\Omega} \Bigg),
\end{array}
\end{equation}
with $C_2>0$ depending on $|\Omega|, \nu, \tF, \alpha$, and $\wt{\beta}$.
Next, in order to bound the last term in \eqref{eq:bound-int-div-bsi-1-fd}, we apply some algebraic manipulation in \eqref{eq:fully-discrete-weak-Brinkman-Forchheimer}, choose $(\btau_h, \bv_h) = (\bsi^n_h, d_{t}\,\bu^n_h)$, and employ \eqref{eq:identity-discrete-time-derivative}, Cauchy--Schwarz and Young's inequalities, to obtain
\begin{equation}\label{eq:aux-1-fully-discrete}
\begin{array}{l}
\ds \frac{1}{2\,\nu}\,d_{t}\,\|(\bsi^n_h)^\rd\|^2_{0,\Omega} 
+ \tF\,(\Delta\,t)^{-1}\,(|\bu^n_h|^{\rp-2}\bu^n_h,\bu^n_h - \bu^{n-1}_h)_{\Omega}
+ \frac{\alpha}{2}\,d_{t}\,\|\bu^n_h\|^2_{0,\Omega}  \\ [3ex]
\ds\quad\,+\,\,\frac{\Delta\,t}{2}\,\Big( \frac{1}{\nu}\,\|d_{t}\,(\bsi^n_h)^\rd\|^2_{0,\Omega} + \alpha\,\|d_{t}\,\bu^n_h\|^2_{0,\Omega} \Big)
+ \|d_{t}\,\bu^n_h\|^2_{0,\Omega} 
\,\leq\, \frac{1}{2}\,\|\f^n\|^2_{0,\Omega} 
+ \frac{1}{2}\,\|d_{t}\,\bu^n_h\|^2_{0,\Omega}.
\end{array}
\end{equation}
In turn, employing H\"older and Young's inequalities, we have
$$
\big|(|\bu^n_h|^{\rp-2}\bu^n_h,\bu^{n-1}_h)_{\Omega}\big|
\leq \frac{\rp-1}{\rp}\|\bu^n_h\|^{\rp}_\bM + \frac{1}{\rp}\,\|\bu^{n-1}_h\|^{\rp}_{\bM},
$$
which implies
\begin{equation}\label{eq:aux-2-fully-discrete}
(\Delta\,t)^{-1}\,(|\bu^n_h|^{\rp-2}\bu^n_h,\bu^n_h - \bu^{n-1}_h)_{\Omega} 
\,\geq\, \frac{(\Delta\,t)^{-1}}{\rp}\,\Big(\|\bu^n_h\|^{\rp}_{\bM} - \|\bu^{n-1}_h\|^{\rp}_{\bM} \Big) 
\,=\, \frac{1}{\rp}\,d_{t}\,\|\bu^n_h\|^{\rp}_{\bM}.
\end{equation}
Thus, combining \eqref{eq:aux-1-fully-discrete} with \eqref{eq:aux-2-fully-discrete}, summing up over the time index $n=1,\dots,N$ and multiplying by $\Delta\,t$, we get 
\begin{equation}\label{eq:bound-ut-int-ut-fd}
\|\bu^N_h\|^{\rp}_{\bM} 
+ \Delta\,t\,\sum^N_{n=1} \|d_{t}\,\bu^n_h\|^2_{0,\Omega} 
\,\leq\, C_3\,\Bigg( \Delta\,t\,\sum^N_{n=1}\, \|\f^n\|^2_{0,\Omega} 
+ \|(\bsi^0_h)^\rd\|^2_{0,\Omega} + \|\bu^0_h\|^{\rp}_{\bM} + \|\bu^0_h\|^{2}_{0,\Omega} \Bigg),
\end{equation}
with $C_3 := \max\big\{ 1,1/\nu,2\,\tF/\rp,\alpha \big\}$ as in \eqref{eq:bound-ut-int-ut}. 
Then, combining \eqref{eq:bound-ut-int-ut-fd} with \eqref{eq:bound-int-div-bsi-1-fd} yields
\begin{equation*}
\begin{array}{l}
\ds \Delta\,t\,\sum^N_{n=1} \|\bdiv\bsi^n_h\|^2_{\bL^{\rq}(\Omega)} \\ [3ex] 
\ds\quad\,\leq\, C_4\,\Bigg( \Delta\,t\,\sum^N_{n=1} \Big( \|\f^n\|^{2(\rp-1)}_{0,\Omega} + \|\f^n\|^2_{0,\Omega} \Big) + \|(\bsi^0_h)^\rd\|^2_{0,\Omega} + \|\bu^0_h\|^{\rp}_{\bM} + \|\bu^0_h\|^{2(\rp-1)}_{0,\Omega} + \|\bu^0_h\|^{2}_{0,\Omega} \Bigg),
\end{array}
\end{equation*}
which, combined with \eqref{eq:stability-bound-2-fd} and \eqref{eq:deviatoric-inequality},
implies \eqref{eq:stability-bound-fd}.
In addition, \eqref{eq:bound-ut-int-ut-fd} implies \eqref{eq:stability-bound-u-linf-lp-fd} with the same constant $C_{\cG}$ as in \eqref{eq:constant-CG-definition}, concluding the proof.
\end{proof}

Now, we proceed with establishing rates of convergence for the fully discrete scheme \eqref{eq:fully-discrete-weak-Brinkman-Forchheimer}.
To that end, we subtract the fully discrete problem \eqref{eq:fully-discrete-weak-Brinkman-Forchheimer} from the continuous counterparts \eqref{eq:weak-Brinkman-Forchheimer} at each time step $n = 1,\dots, N$, to obtain the following error system:
\begin{equation}\label{eq:error-system-fully-discrete}
\begin{array}{rll}
[\cA(\bsi^n - \bsi^n_h),\btau_h] + [\cB(\btau_h),\bu^n - \bu^n_h] & = & 0, \\ [3ex]
(d_{t}\,(\bu^n - \bu^n_h),\bv_h)_{\Omega} - [\cB(\bsi^n - \bsi^n_h),\bv_h] + [\cC(\bu^n) - \cC(\bu^n_h),\bv_h] & = & (r_n(\bu),\bv_h)_{\Omega},
\end{array}
\end{equation}
for all $(\btau_h, \bv_h)\in \bbX_{0,h}\times \bM_h$, where $r_n$ denotes the difference between the time derivative and its discrete analog, that is
\begin{equation*}
r_n(\bu) \,=\, d_{t}\,\bu^n - \partial_t\,\bu(t_n).
\end{equation*}
In addition, we recall from \cite[Lemma~4]{byz2015} that for sufficiently smooth $\bu$, there holds
\begin{equation}\label{eq:bound-second-time-derivative}
\Delta\,t\,\sum^N_{n=1} \|r_n(\bu)\|^2_{0,\Omega} 
\,\leq\, C(\partial_{tt}\,\bu)\,(\Delta\,t)^2,\quad\mbox{with}\quad C(\partial_{tt}\,\bu) \,:=\, C\,\|\partial_{tt}\,\bu\|^2_{\L^2(0,T;\bL^2(\Omega))}.
\end{equation}
Then, the proof of the theoretical rate of convergence of the fully discrete scheme \eqref{eq:fully-discrete-weak-Brinkman-Forchheimer} follows the structure of the proof of Theorems~\ref{thm:error-estimates} and \ref{thm:rate-of-convergence}, using discrete-in-time arguments as in the proof of Theorem~\ref{thm:bound-stability-fully-discrete} and the estimate \eqref{eq:bound-second-time-derivative}.
\begin{thm}\label{thm:rate-of-convergence-fully-discrete}
Let the assumptions of Theorem~\ref{thm:rate-of-convergence} hold.
Then for the solution of the fully discrete problem \eqref{eq:fully-discrete-weak-Brinkman-Forchheimer} there exist $\wh{C}_1(\bsi,\bu), \wh{C}_2(\bsi,\bu) > 0$ depending only on
$C(\partial_{tt}\,\bu), C(\bsi), C(\bu), |\Omega|, \nu, \alpha, \tF$, $\wt{\beta}, \rp$, and data, such that
\begin{equation*}
\begin{array}{l}
\ds \|\be^{\rd}_{\bsi}\|_{\ell^2(0,T;\bbL^2(\Omega))} + \|\be_{\bu}\|_{\ell^{2}(0,T;\bM)} + \|\be_{\bu}\|_{\ell^\infty(0,T;\bL^2(\Omega))} 
+ \Delta\,t\,\|d_{t}\,\be_{\bu}\|_{\ell^2(0,T;\bL^2(\Omega))} \\ [2ex]
\ds\quad\,\leq\, \wh{C}_1(\bsi,\bu)\,\Big( h^{l\,\rq/2} + h^{l} + h^{l\,\rp/2} + \Delta\,t \Big)
\end{array}
\end{equation*}
and
\begin{equation*}
\|\be_{\bsi}\|_{\ell^2(0,T;\bbX)} 
\,\leq\, \wh{C}_2(\bsi,\bu)\,h^{-n(\rp-2)/(2\rp)}\,\Big( h^{l\,\rq/2} + h^{l} + h^{l\,\rp/2} + \Delta\,t \Big).
\end{equation*}
\end{thm}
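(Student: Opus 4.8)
The plan is to mirror the two-part structure of the semidiscrete proof of Theorems~\ref{thm:error-estimates} and \ref{thm:rate-of-convergence}, replacing the time derivative $\partial_t$ by the backward difference $d_t$, using the identity \eqref{eq:identity-discrete-time-derivative} in place of the continuous product rule, and carrying along the consistency term $r_n(\bu)$ from \eqref{eq:error-system-fully-discrete}. First I would decompose the errors exactly as in \eqref{eq:error-decomposition}, writing $\be^n_{\bsi} = \bdelta^n_{\bsi} + \bbeta^n_{\bsi}$ and $\be^n_{\bu} = \bdelta^n_{\bu} + \bbeta^n_{\bu}$ with the same projectors. Choosing $(\btau_h,\bv_h) = (\bbeta^n_{\bsi},\bbeta^n_{\bu})$ in \eqref{eq:error-system-fully-discrete}, the commuting and orthogonality properties \eqref{eq:Pk-projection-property}--\eqref{eq:Pik-interpolator} annihilate the projection-error cross terms exactly as in the semidiscrete case, the coercivity of $\cA$ and the monotonicity bound \eqref{eq:properties-operator-C-3} supply the terms on the left, and \eqref{eq:identity-discrete-time-derivative} turns $(d_t\bbeta^n_{\bu},\bbeta^n_{\bu})_\Omega$ into $\tfrac{1}{2} d_t\|\bbeta^n_{\bu}\|^2_{0,\Omega}$ plus a nonnegative $\tfrac{1}{2}\Delta\,t\,\|d_t\bbeta^n_{\bu}\|^2_{0,\Omega}$ contribution. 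The one genuinely new term on the right, $(r_n(\bu),\bbeta^n_{\bu})_\Omega$, I would absorb by Cauchy--Schwarz and Young's inequality.

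Summing over $n = 1,\dots,N$, multiplying by $\Delta\,t$, and invoking the consistency estimate \eqref{eq:bound-second-time-derivative} to control $\Delta\,t\sum_n\|r_n(\bu)\|^2_{0,\Omega}$ by $C(\partial_{tt}\bu)(\Delta\,t)^2$, I would reach the discrete-in-time analogue of \eqref{eq:error-bound-2}. The discrete inf-sup condition \eqref{eq:discrete-inf-sup-operator-B}, applied to the first equation of \eqref{eq:error-system-fully-discrete} as in \eqref{eq:discrete-inf-sup-bbeta-bdelta}, then converts the control of $\|(\bbeta^n_{\bsi})^\rd\|_{0,\Omega}$ into control of $\|\bbeta^n_{\bu}\|_\bM$, giving the analogue of \eqref{eq:new-error-bound-2}; the initial-data contribution is handled exactly as in \eqref{eq:data-0-bound} by subtracting \eqref{eq:discrete-initial-condition-problem} from \eqref{eq:initial-condition-problem}. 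Combined with the triangle inequality through \eqref{eq:error-decomposition}, this yields the first asserted estimate, the extra $\Delta\,t\,\|d_t\be_{\bu}\|_{\ell^2(0,T;\bL^2(\Omega))}$ term being retained from the $\tfrac{1}{2}\Delta\,t\,\|d_t\bbeta^n_{\bu}\|^2_{0,\Omega}$ contributions of \eqref{eq:identity-discrete-time-derivative} and the $+\Delta\,t$ rate coming from \eqref{eq:bound-second-time-derivative}.

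For the second estimate I would bound $\|\bdiv(\bbeta^n_{\bsi})\|_{\bL^\rq(\Omega)}$ from the second equation of \eqref{eq:error-system-fully-discrete} using $\bdiv(\bbX_0)=\bM'$, which reduces everything to estimating $\Delta\,t\sum_n\|d_t\bbeta^n_{\bu}\|^2_{0,\Omega}$. This is the discrete counterpart of the step leading to \eqref{eq:error-bound-5}: I would take the backward difference of the first equation of \eqref{eq:error-system-fully-discrete} (legitimate since $\cA$ and $\cB$ are linear), test with $(\btau_h,\bv_h)=(\bbeta^n_{\bsi},d_t\bbeta^n_{\bu})$, and apply \eqref{eq:identity-discrete-time-derivative} once more to produce $\tfrac{1}{2\nu}d_t\|(\bbeta^n_{\bsi})^\rd\|^2_{0,\Omega}$ together with $\|d_t\bbeta^n_{\bu}\|^2_{0,\Omega}$. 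The nonlinear contribution $[\cC(\bu^n)-\cC(\bu^n_h),d_t\bbeta^n_{\bu}]$ would be controlled via the Lipschitz-type continuity bound \eqref{eq:properties-operator-C-2}, the $\ell^\infty(0,T;\bM)$ stability of $\bu$ and $\bu_h$ (cf. \eqref{eq:stability-bound-u-Linf-Lp} and \eqref{eq:stability-bound-u-linf-lp-fd}), and the global inverse inequality $\|d_t\bbeta^n_{\bu}\|_\bM \leq c\,h^{-n(\rp-2)/(2\rp)}\,\|d_t\bbeta^n_{\bu}\|_{0,\Omega}$, which is precisely the source of the factor $h^{-n(\rp-2)/(2\rp)}$. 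After summing, absorbing the $\|d_t\bbeta^n_{\bu}\|^2_{0,\Omega}$ terms into the left-hand side, and combining with \eqref{eq:deviatoric-inequality}, the approximation properties $(\mathbf{AP}^{\bsi}_h)$ and $(\mathbf{AP}^{\bu}_h)$ deliver the stated rate $h^{l\,\rq/2}+h^{l}+h^{l\,\rp/2}+\Delta\,t$.

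The main obstacle will be this last step, namely differencing the nonlinear operator $\cC$ in time and estimating it against $d_t\bbeta^n_{\bu}$. One must carry the consistency term $r_n(\bu)$ through the $d_t$-tested second equation without degrading the $\Delta\,t$ rate, keep the retained $\Delta\,t\,\|d_t(\cdot)\|^2$ contributions of \eqref{eq:identity-discrete-time-derivative} on the favourable side of the inequality, and ensure the inverse inequality is applied only to the discrete component $\bbeta^n_{\bu}\in\bM_h$ rather than to $d_t\bdelta^n_{\bu}$, which must instead be eliminated by the projection properties \eqref{eq:Pk-projection-property}--\eqref{eq:Pik-interpolator}. Careful bookkeeping of the exponents $\rp$ and $\rq$, exactly as in \eqref{eq:error-bound-1b} and \eqref{eq:error-bound-5}, is then what guarantees the final rate after applying \eqref{eq:bound-second-time-derivative} and the approximation properties.
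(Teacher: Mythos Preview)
Your proposal is correct and follows essentially the same approach as the paper. The paper gives no detailed proof of this theorem, stating only that it ``follows the structure of the proof of Theorems~\ref{thm:error-estimates} and \ref{thm:rate-of-convergence}, using discrete-in-time arguments as in the proof of Theorem~\ref{thm:bound-stability-fully-discrete} and the estimate \eqref{eq:bound-second-time-derivative}''; your outline is precisely a fleshed-out version of this sketch, including the use of \eqref{eq:identity-discrete-time-derivative} to generate the extra $\Delta\,t\,\|d_t\,\be_{\bu}\|_{\ell^2}$ term, the backward differencing of the first error equation to produce $d_t\|(\bbeta^n_{\bsi})^\rd\|^2_{0,\Omega}$, and the handling of $r_n(\bu)$ via \eqref{eq:bound-second-time-derivative}.
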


Consequently, and similarly to Corollary~\ref{cor:rate-of-convergence-p-Gu} we establishes the corresponding approximation result for the post-processed variables.
\begin{cor}\label{cor:rate-of-convergence-p-Gu-fd}
Let the assumptions of Theorem~\ref{thm:rate-of-convergence} hold.
Let $p_h$ and $\bGu_h$ be given by \eqref{eq:discrete-ph-Gh} at each time step $n = 1,\dots, N$.
Then, there exist $\ov{C}_1(\bsi,\bu), \ov{C}_2(\bsi,\bu) > 0$ depending only on $C(\partial_{tt}\,\bu), C(\bsi), C(\bu)$, $|\Omega|, \nu, \alpha, \tF, \wt{\beta}, \rp$, and data, such that
\begin{equation*}
\|\be_{\bGu}\|_{\ell^2(0,T;\bbL^2(\Omega))}
\,\leq\, \ov{C}_1(\bsi,\bu)\,\Big( h^{l\,\rq/2} + h^{l} + h^{l\,\rp/2} + \Delta\,t \Big)
\end{equation*}
and
\begin{equation*}
\|\be_p\|_{\ell^2(0,T;\L^2(\Omega))} 
\,\leq\, \ov{C}_2(\bsi,\bu)\,h^{-n(\rp-2)/(2\rp)}\,\Big( h^{l\,\rq/2} + h^{l} + h^{l\,\rp/2} + \Delta\,t \Big).
\end{equation*}
\end{cor}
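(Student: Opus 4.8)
The plan is to observe that both postprocessed quantities in \eqref{eq:discrete-ph-Gh} are obtained by applying a fixed bounded linear operator to the discrete pseudostress, so their errors are merely linear images of $\be_{\bsi}$ and their rates follow immediately from Theorem~\ref{thm:rate-of-convergence-fully-discrete}. Indeed, since at the continuous level $\bGu = \frac{1}{\nu}\,\bsi^\rd$ and $p = -\frac{1}{n}\,\tr(\bsi)$, subtracting the discrete definitions \eqref{eq:discrete-ph-Gh} at each time $t_n$ yields the pointwise identities
\begin{equation*}
\be_{\bGu} \,=\, \frac{1}{\nu}\,\be^{\rd}_{\bsi} \qan \be_p \,=\, -\frac{1}{n}\,\tr(\be_{\bsi}).
\end{equation*}

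First I would treat the velocity gradient: taking $\bbL^2(\Omega)$-norms in the first identity gives $\|\be_{\bGu}\|_{0,\Omega} = \nu^{-1}\|\be^{\rd}_{\bsi}\|_{0,\Omega}$ at each $t_n$, hence $\|\be_{\bGu}\|_{\ell^2(0,T;\bbL^2(\Omega))} = \nu^{-1}\|\be^{\rd}_{\bsi}\|_{\ell^2(0,T;\bbL^2(\Omega))}$, and the first estimate follows directly from the first bound of Theorem~\ref{thm:rate-of-convergence-fully-discrete} with $\ov{C}_1(\bsi,\bu) := \nu^{-1}\,\wh{C}_1(\bsi,\bu)$. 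For the pressure, the pointwise Cauchy--Schwarz inequality $|\tr(\btau)| \leq \sqrt{n}\,|\btau|$ yields $\|\be_p\|_{0,\Omega} \leq n^{-1/2}\,\|\be_{\bsi}\|_{0,\Omega} \leq n^{-1/2}\,\|\be_{\bsi}\|_{\bbX}$, so that $\|\be_p\|_{\ell^2(0,T;\L^2(\Omega))} \leq n^{-1/2}\,\|\be_{\bsi}\|_{\ell^2(0,T;\bbX)}$, and the second estimate follows from the second bound of Theorem~\ref{thm:rate-of-convergence-fully-discrete} with $\ov{C}_2(\bsi,\bu) := n^{-1/2}\,\wh{C}_2(\bsi,\bu)$.

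There is no genuine obstacle here, since the whole argument reduces to the linearity and continuity of the deviatoric and trace operators combined with the already-established estimates of Theorem~\ref{thm:rate-of-convergence-fully-discrete}. The only point worth highlighting is the asymmetry between the two bounds: the velocity-gradient error is controlled by $\|\be^{\rd}_{\bsi}\|$ alone and therefore inherits the better rate without any inverse-inequality factor, whereas the pressure error is controlled by $\|\tr(\be_{\bsi})\|$ and hence by the full $\bbX$-norm of $\be_{\bsi}$, which is precisely why only the pressure estimate carries the factor $h^{-n(\rp-2)/(2\rp)}$.
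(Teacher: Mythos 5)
Your proposal is correct and is essentially the paper's own argument: the paper proves this corollary (as with its semidiscrete counterpart, Corollary~\ref{cor:rate-of-convergence-p-Gu}) as an immediate consequence of Theorem~\ref{thm:rate-of-convergence-fully-discrete} via the linear postprocessing identities $\be_{\bGu} = \frac{1}{\nu}\,\be^{\rd}_{\bsi}$ and $\be_p = -\frac{1}{n}\,\tr(\be_{\bsi})$, which is exactly what you do. Your observation about the asymmetry --- the gradient error needing only $\|\be^{\rd}_{\bsi}\|_{\ell^2(0,T;\bbL^2(\Omega))}$ and thus avoiding the inverse-inequality factor $h^{-n(\rp-2)/(2\rp)}$, while the pressure error requires the full $\bbX$-norm bound --- correctly explains the structure of the two estimates.
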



\section{Numerical results}\label{sec:numerical-results}

In this section we present numerical results that illustrate the behavior of the fully discrete
method \eqref{eq:fully-discrete-weak-Brinkman-Forchheimer}.
Our implementation is based on a {\tt FreeFem++} code \cite{freefem}, in conjunction with the direct
linear solver {\tt UMFPACK} \cite{umfpack}.
For spatial discretization we use the Raviart--Thomas spaces $\bbRT_k-\bP_k$ with $k = 0,1$.
We handle the nonlinearity using Newton's method. The iterations are terminated once the
relative difference between the coefficient vectors for two consecutive iterates is sufficiently small, i.e.,
\begin{equation*}
\frac{\|\coeff^{m+1} - \coeff^m\|_{\ell^2}}{\|\coeff^{m+1}\|_{\ell^2}} \,\leq\, \tol,
\end{equation*}
where $\|\cdot\|_{\ell^2}$ is the standard $\ell^2$-norm in $\mathbb{R}^{\DOF}$, with $\DOF$ denoting the total number of degrees of
freedom defining the finite element subspaces $\bbX_h$ and $\bM_h$, and $\tol$ is a fixed tolerance chosen as $\tol=1\textrm{E}-06$.

The examples considered in this section are described next.
In all of them, and for the sake of simplicity, we choose $\nu = 1, \rp = 3$, and $\rq = 3/2$.
In addition, the condition $(\tr(\bsi_h),1)_{\Omega} = 0$ is implemented using a scalar Lagrange multiplier (adding one row and one column to the matrix system that solves \eqref{eq:discrete-weak-Brinkman-Forchheimer} for $\bsi_h$ and $\bu_h$). 

Examples~1 and 2 are used to corroborate the rate of convergence in two and three dimensional domains, respectively. 
The total simulation time for these examples is $T=0.01$\,s and the time step is $\Delta\,t = 10^{-3}$\,s.
The time step is sufficiently small, so that the time discretization error does not affect the convergence rates.
On the other hand, Examples~3 and 4 are used to analyze the behavior of the method
when different Darcy and Forchheimer coefficients are considered in different scenarios.
For these cases, the total simulation time and the time step are considered as $T = 1$\,s and $\Delta\,t = 10^{-2}$\,s, respectively.


\subsection*{Example 1: Two-dimensional smooth exact solution}

In this test we study the convergence for the space discretization using an analytical solution.
The domain is the square $\Omega = (0,1)^2$.
We consider $\alpha = 1, \tF = 10$, and the data $\f$ is adjusted so that the exact solution is given by the smooth functions
\begin{equation*}
\bu = \exp(t)
\left(\begin{array}{c}
\sin(\pi\,x)\cos(\pi\,y) \\ -\cos(\pi\,x)\sin(\pi\,y)
\end{array}\right),\quad
p = \exp(t)\,\cos(\pi\,x)\,\sin\left(\frac{\pi\,y}{2}\right).
\end{equation*}
The model problem is then complemented with the appropriate Dirichlet boundary condition and initial data.

In Figure~\ref{fig:example1} we display the solution obtained with the
mixed finite element method using a second order approximation and
$627,360\,\DOF$ at time $T = 0.01$.  Note that we are able to compute
not only the original unknowns, but also the pressure field
and velocity gradient through the formulas in \eqref{eq:discrete-ph-Gh}.
Tables \ref{tab:example1-k0} and \ref{tab:example1-k1} show the
convergence history for a sequence of quasi-uniform mesh refinements,
including the average number of Newton iterations. 
The results confirm that the sub-optimal spatial rates of convergence
$\cO(h^{3\,(k+1)/4})$ and $\cO(h^{3\,(k+1)/4 - 1/3})$ provided by
Theorem~\ref{thm:rate-of-convergence-fully-discrete} and
Corollary~\ref{cor:rate-of-convergence-p-Gu-fd} (see also Theorem~\ref{thm:rate-of-convergence} and
Corollary~\ref{cor:rate-of-convergence-p-Gu}) are attained for $k=0,1$.
Moreover, the numerical results suggest optimal rate of convergence
$\cO(h^{k+1})$. The Newton's method exhibits behavior independent of
the mesh size, converging in three or four iterations in all cases.

\begin{figure}[ht]
\begin{center}		
\includegraphics[width=5cm]{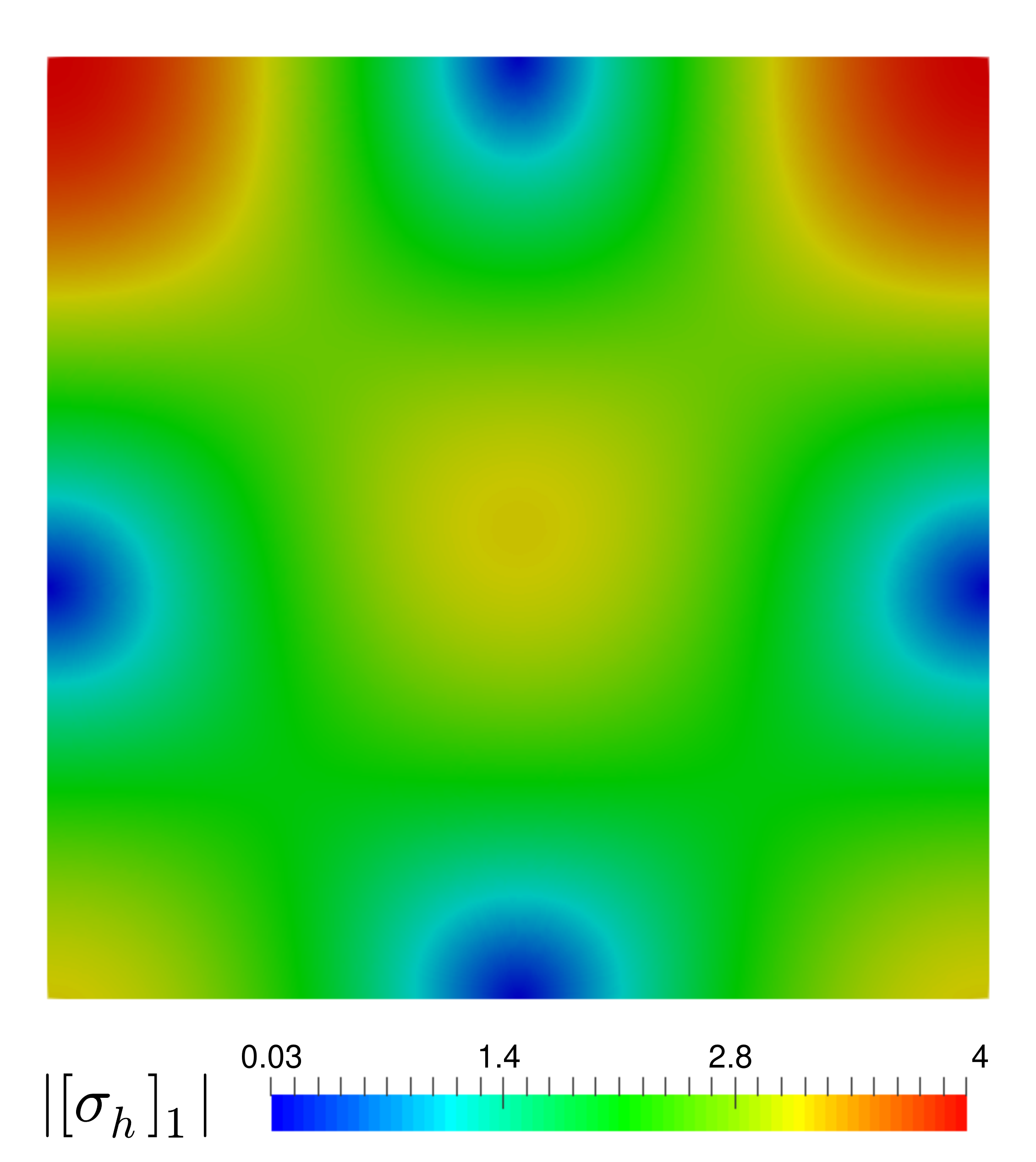}
\includegraphics[width=5cm]{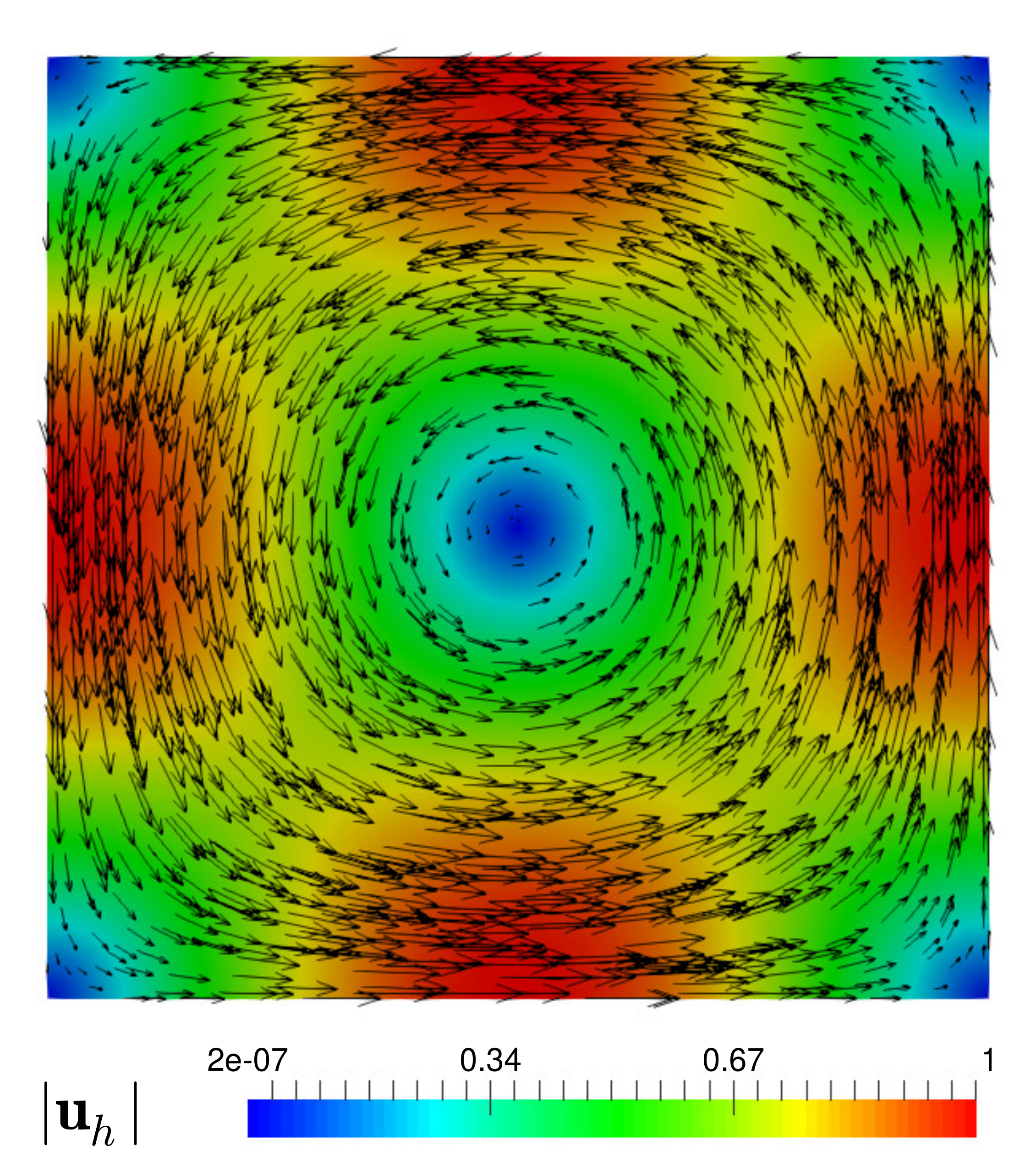}	
\includegraphics[width=5cm]{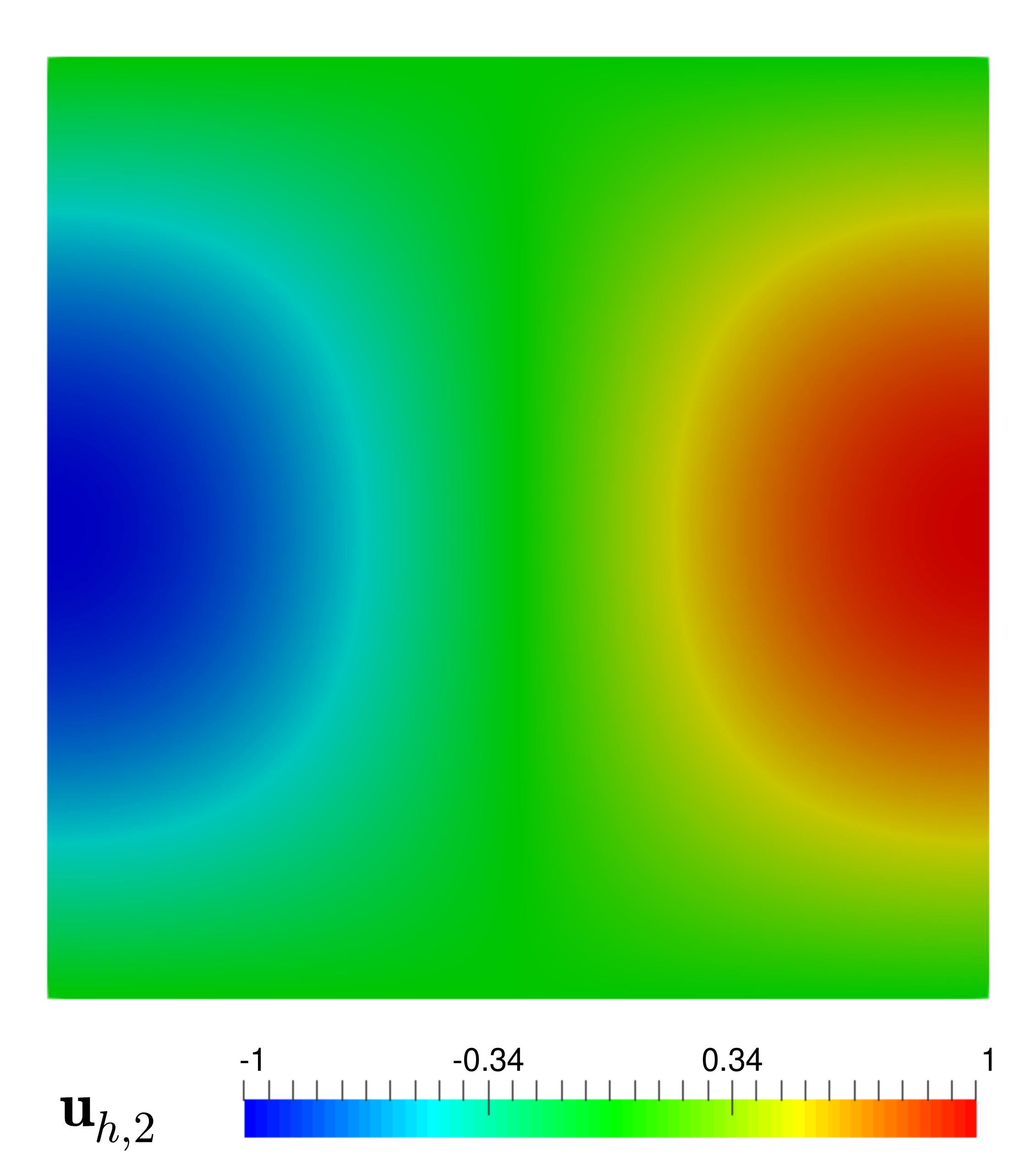}
	
\includegraphics[width=5cm]{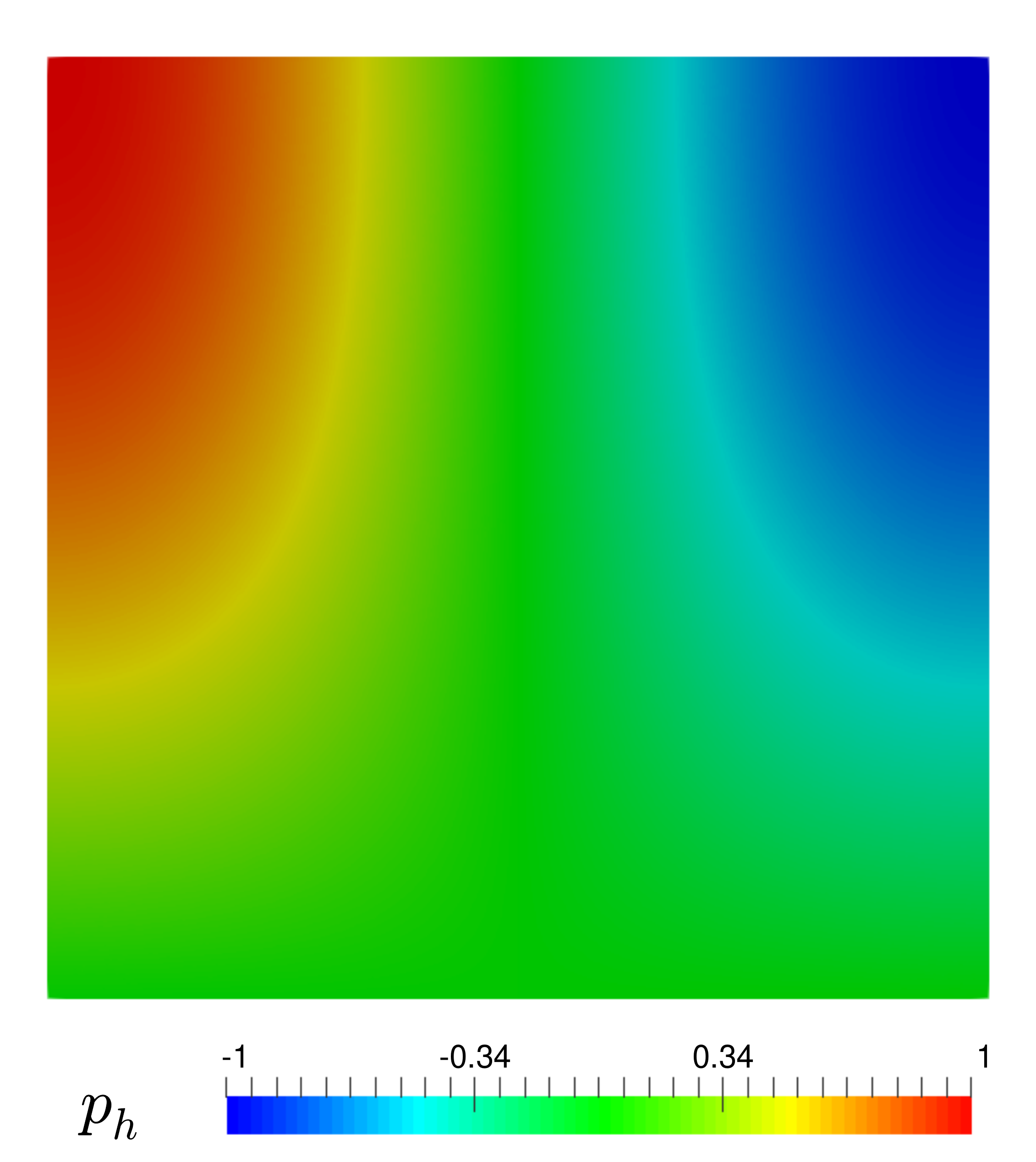}
\includegraphics[width=5cm]{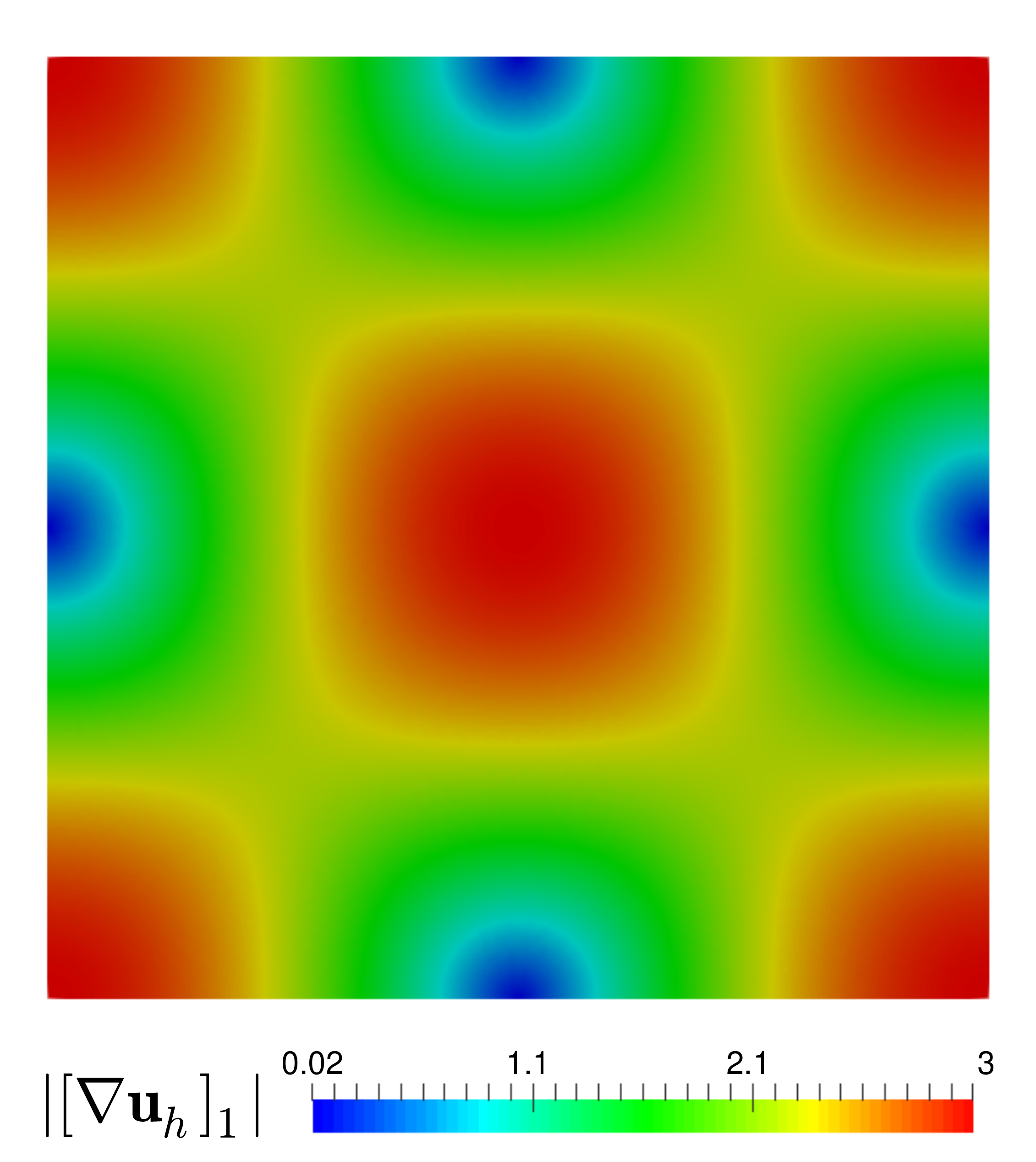}
\includegraphics[width=5cm]{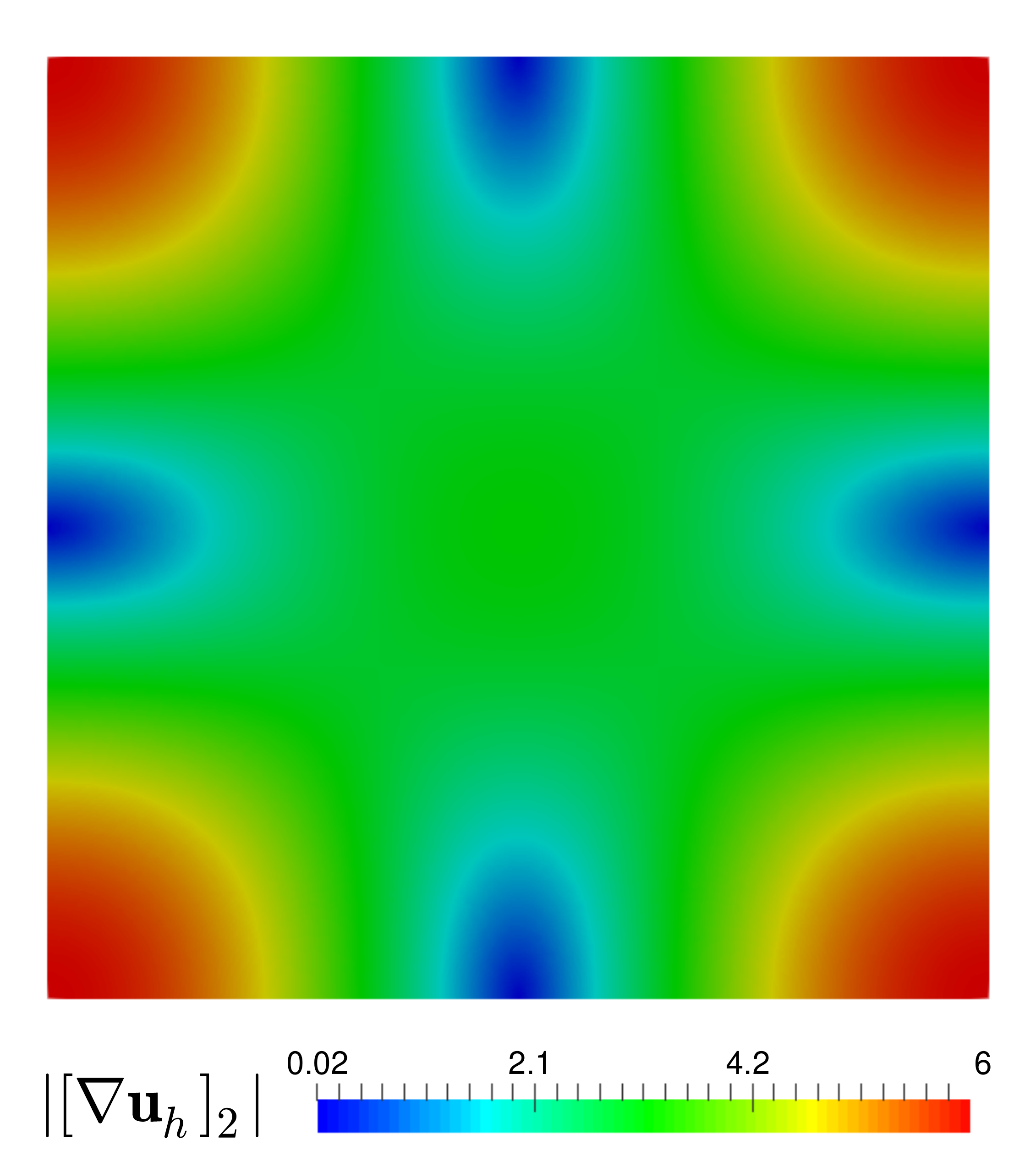}
\caption{Example 1, Computed magnitude of a component of the pseudostress tensor and
  the velocity, and a velocity component (top plots); pressure field and magnitude of the velocity gradient components (bottom plots).}\label{fig:example1}
\end{center}
\end{figure}

\begin{table}[ht]
\begin{center}
		
\begin{tabular}{r|c||cc|cc|cc}
\hline
& & \multicolumn{2}{|c|}{$\|\be_{\bsi}\|_{\ell^2(0,T;\bbX)}$} & \multicolumn{2}{|c|}{$\|\be_{\bu}\|_{\ell^2(0,T;\bM)}$} & \multicolumn{2}{|c}{$\|\be_{\bu}\|_{\ell^{\infty}(0,T;\bL^2(\Omega))}$} \\ 
$\DOF$    & $h$ & $\error$ & $\rate$ & $\error$ & $\rate$ & $\error$ & $\rate$ \\  \hline\hline
196    & 0.3727 & 4.3E-01 &   --   & 2.3E-02 &   --   & 2.0E-01 &   --   \\ 
792    & 0.1964 & 1.8E-01 & 1.3822 & 9.9E-03 & 1.3081 & 8.7E-02 & 1.3077 \\ 
3084   & 0.0970 & 8.7E-02 & 1.0326 & 5.0E-03 & 0.9775 & 4.4E-02 & 0.9773 \\
12208  & 0.0478 & 4.1E-02 & 1.0721 & 2.4E-03 & 1.0245 & 2.1E-02 & 1.0184 \\
48626  & 0.0245 & 2.0E-02 & 1.0419 & 1.2E-03 & 1.0150 & 1.1E-02 & 1.0188 \\
196242 & 0.0128 & 1.0E-02 & 1.0826 & 6.1E-04 & 1.0761 & 5.3E-03 & 1.0755 \\ 
\hline 
\end{tabular}
		
\medskip
		
\begin{tabular}{cc|cc|c}
\hline
\multicolumn{2}{c}{$\|\be_p\|_{\ell^2(0,T;\L^2(\Omega))}$} & \multicolumn{2}{|c|}{$\|\be_{\bGu}\|_{\ell^2(0,T;\bbL^2(\Omega))}$} & \\ 
$\error$ & $\rate$ & $\error$ & $\rate$ & $\iter$ \\  \hline\hline
4.5E-02 &   --   & 7.2E-02 &  --    & 3 \\ 
1.8E-02 & 1.3903 & 3.4E-02 & 1.1696 & 3 \\ 
8.6E-03 & 1.0837 & 1.7E-02 & 0.9690 & 3 \\
3.5E-03 & 1.2611 & 8.6E-03 & 0.9710 & 3 \\
1.8E-03 & 1.0284 & 4.3E-03 & 1.0421 & 3 \\
8.5E-04 & 1.1326 & 2.1E-03 & 1.0753 & 3 \\ 
\hline 
\end{tabular}
		
\caption{{\sc Example 1}, Number of degrees of freedom, mesh sizes, errors, rates of convergences, and average number of Newton iterations for the mixed $\bbRT_0-\bP_0$ approximation of the Brinkman--Forchheimer model.}\label{tab:example1-k0}
\end{center}
\end{table}

\begin{table}[ht!]
\begin{center}
		
\begin{tabular}{r|c||cc|cc|cc}
\hline
& & \multicolumn{2}{|c|}{$\|\be_{\bsi}\|_{\ell^2(0,T;\bbX)}$} & \multicolumn{2}{|c|}{$\|\be_{\bu}\|_{\ell^2(0,T;\bM)}$} & \multicolumn{2}{|c}{$\|\be_{\bu}\|_{\ell^{\infty}(0,T;\bL^2(\Omega))}$} \\ 
$\DOF$    & $h$ & $\error$ & $\rate$ & $\error$ & $\rate$ & $\error$ & $\rate$ \\  \hline\hline
608    & 0.3727 & 6.2E-01 &   --   & 5.1E-03 &   --   & 5.7E-02 &   --   \\ 
2496   & 0.1964 & 1.8E-01 & 1.8958 & 1.2E-03 & 2.2442 & 1.4E-02 & 2.2166 \\ 
9792   & 0.0970 & 5.1E-02 & 1.8033 & 3.0E-04 & 1.9672 & 3.5E-03 & 1.9686 \\
38912  & 0.0478 & 1.4E-02 & 1.8696 & 7.6E-05 & 1.9515 & 8.8E-04 & 1.9396 \\
155296 & 0.0245 & 3.5E-03 & 2.0277 & 1.9E-05 & 2.0669 & 2.2E-04 & 2.0693 \\
627360 & 0.0128 & 8.4E-04 & 2.1899 & 4.5E-06 & 2.1837 & 5.4E-05 & 2.1671 \\ 
\hline 
\end{tabular}
		
\medskip
		
\begin{tabular}{cc|cc|c}
\hline
\multicolumn{2}{c}{$\|\be_p\|_{\ell^2(0,T;\L^2(\Omega))}$} & \multicolumn{2}{|c|}{$\|\be_{\bGu}\|_{\ell^2(0,T;\bbL^2(\Omega))}$} & \\ 
$\error$ & $\rate$ & $\error$ & $\rate$ & $\iter$ \\  \hline\hline
6.3E-02 &   --   & 3.4E-02 &  --    & 3 \\ 
1.1E-02 & 2.6684 & 8.3E-03 & 2.2220 & 3 \\ 
1.8E-03 & 2.5820 & 2.0E-03 & 1.9999 & 3 \\
3.9E-04 & 2.1791 & 5.1E-04 & 1.9384 & 3 \\
6.4E-05 & 2.7237 & 1.3E-04 & 2.1013 & 3 \\
1.2E-05 & 2.5604 & 3.0E-05 & 2.2057 & 3 \\ 
\hline 
\end{tabular}
		
\caption{{\sc Example 1}, Number of degrees of freedom, mesh sizes, errors, rates of convergences, and average number of Newton iterations for the mixed $\bbRT_1-\bP^{\dc}_1$ approximation of the Brinkman--Forchheimer model.}\label{tab:example1-k1}
\end{center}
\end{table}


\subsection*{Example 2: Three-dimensional smooth exact solution}

In our second example, we consider the cube domain $\Omega = (0,1)^3$.
The solution is given by
\begin{equation*}
\bu = \exp(t)
\left(\begin{array}{c}
\sin(\pi\,x)\cos(\pi\,y)\cos(\pi\,z) \\ 
-2\,\cos(\pi\,x)\sin(\pi\,y)\cos(\pi\,z) \\
\cos(\pi\,x)\cos(\pi\,y)\sin(\pi\,z)
\end{array}\right),\quad
p = \exp(t)\,(x - 0.5)^3\sin(y+z).
\end{equation*}
Similarly to the first example, we consider the parameters $\alpha=1, \tF=10$, and the right-hand side function $\f$ is computed from \eqref{eq:Brinkman-Forchheimer-1} using the above solution.

The numerical solution at time $T = 0.01$ is shown in
Figure~\ref{fig:example2}. The convergence history for a set of
quasi-uniform mesh refinements using $k = 0$ is shown in
Table~\ref{tab:example2-k0}. Again, the mixed finite element method
converges optimally with order $\cO(h)$, which is better than the theoretical
suboptimal rates of convergence $\cO(h^{3/4})$ and $\cO(h^{1/4})$
provided by Theorem~\ref{thm:rate-of-convergence-fully-discrete} and
Corollary~\ref{cor:rate-of-convergence-p-Gu-fd} (see also Theorem~\ref{thm:rate-of-convergence} and
Corollary~\ref{cor:rate-of-convergence-p-Gu}).

\begin{figure}[ht]
\begin{center}	
\includegraphics[width=5cm]{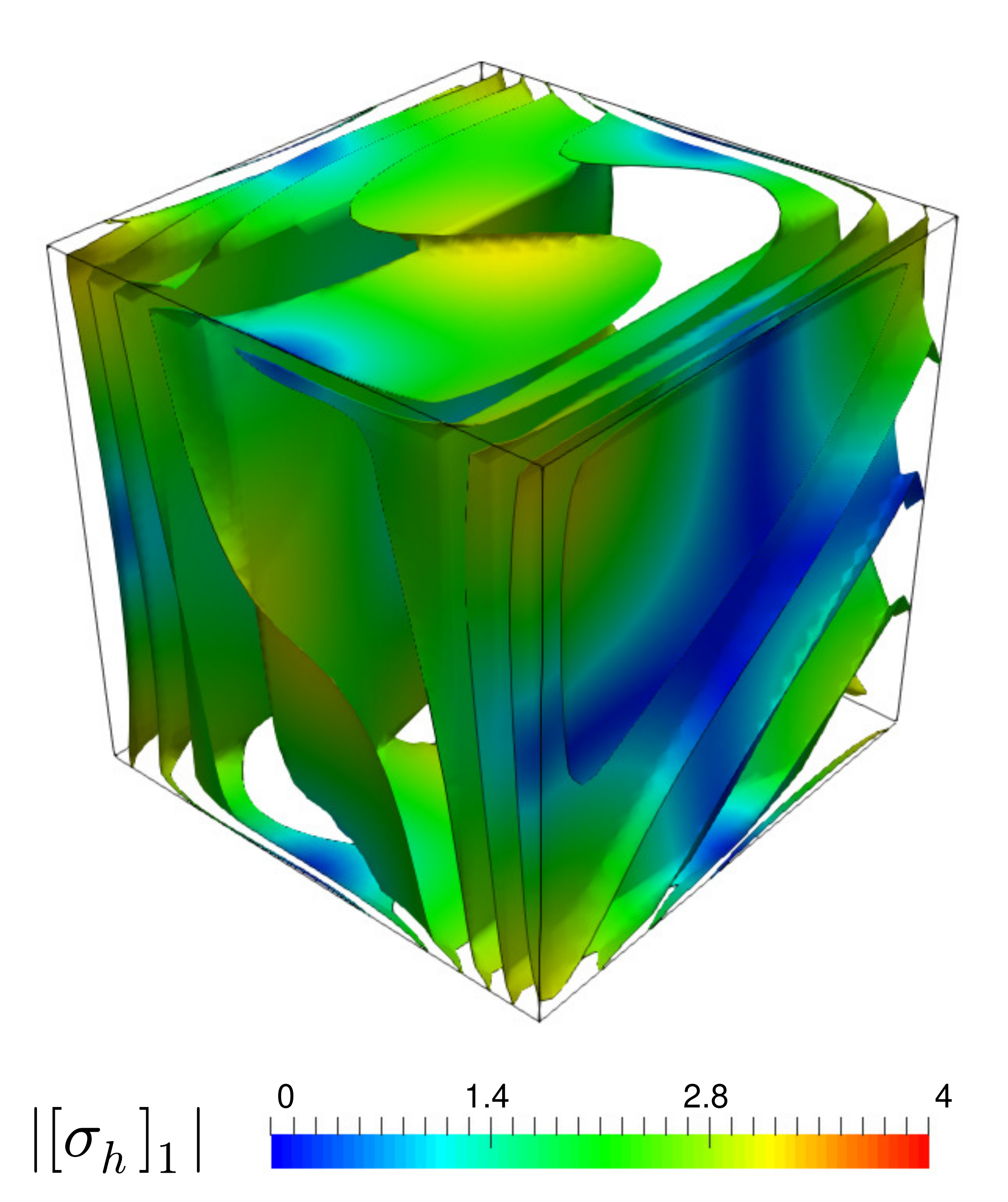}
\includegraphics[width=5cm]{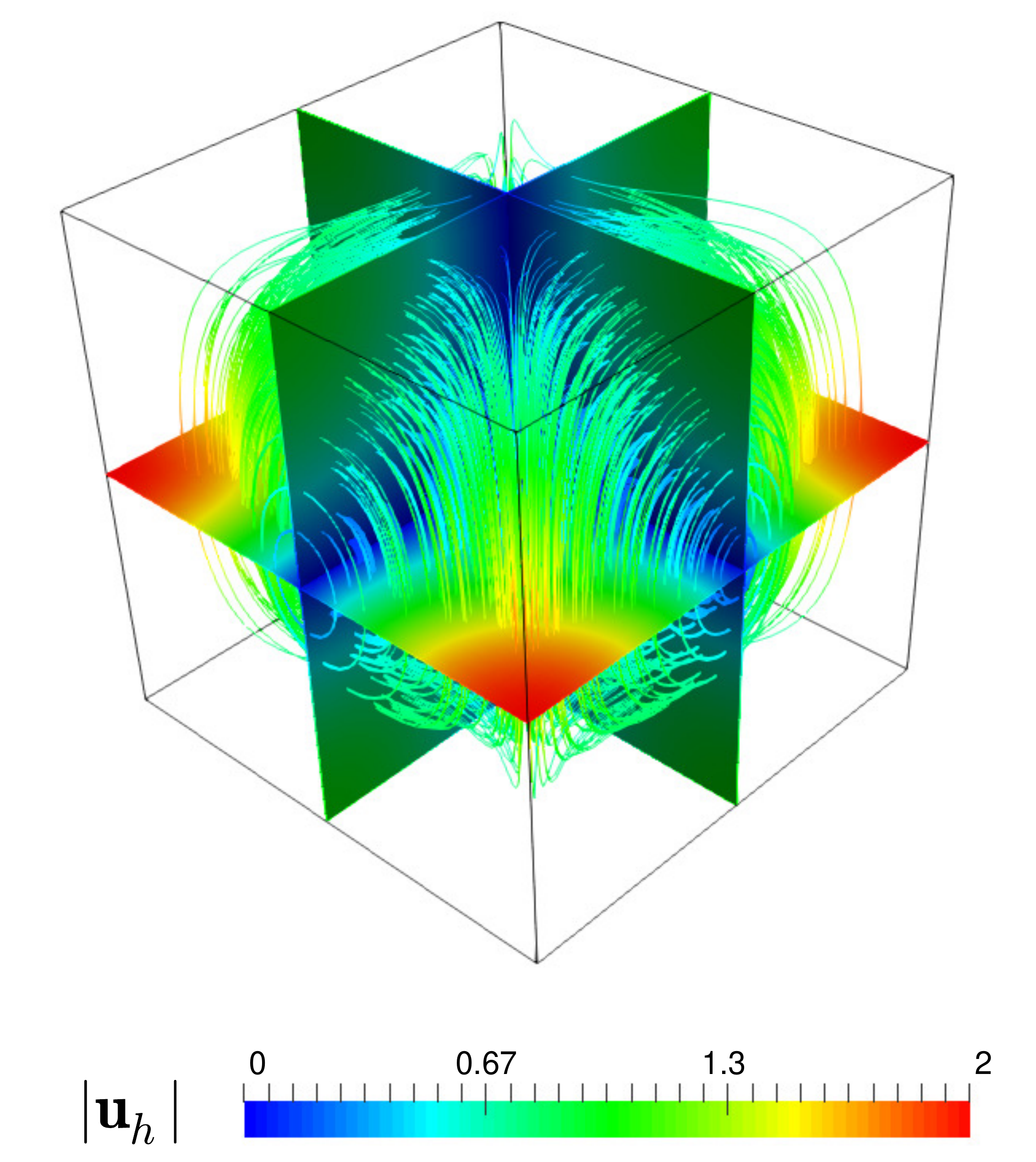}	
\includegraphics[width=5cm]{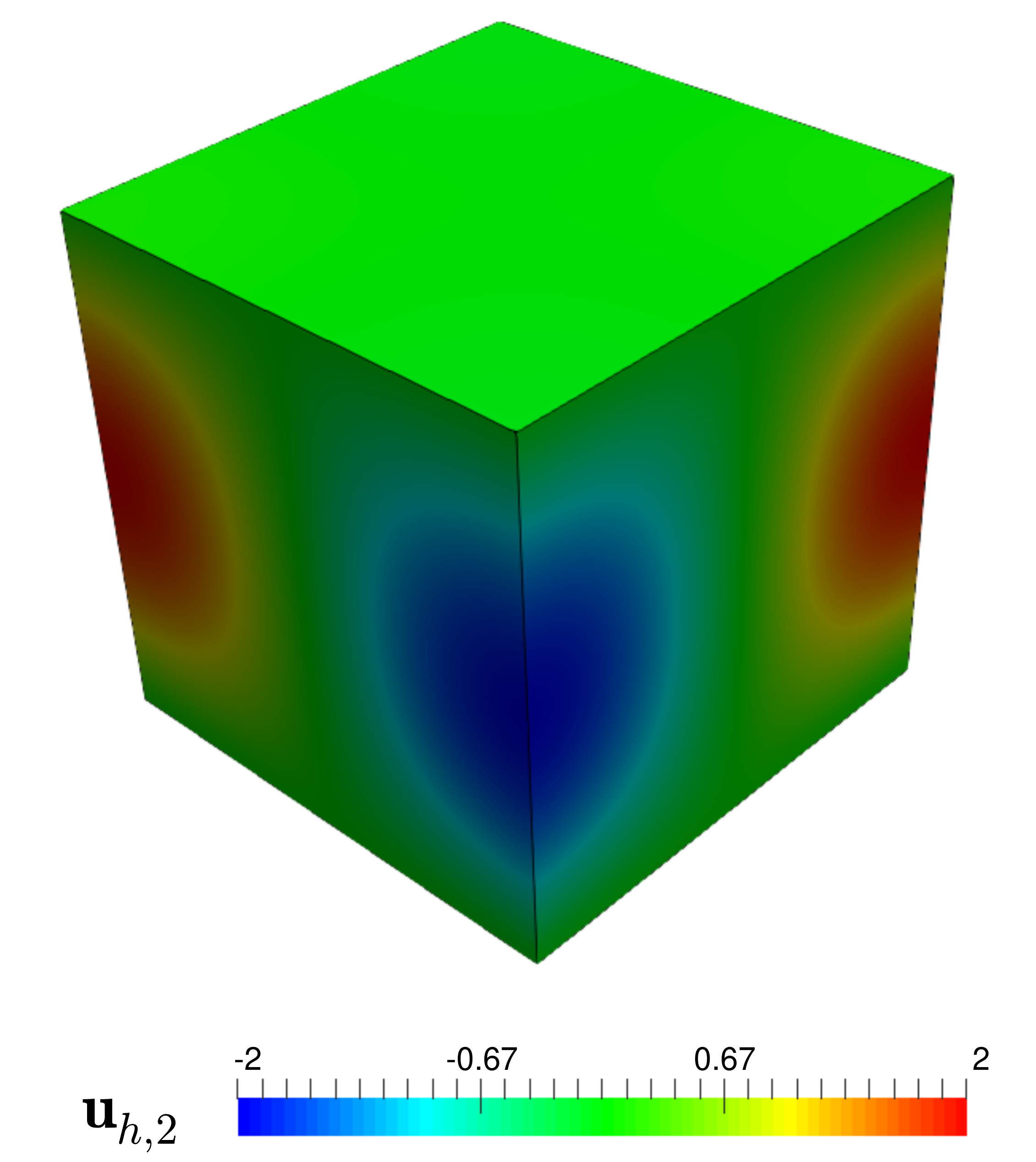}	
		
\includegraphics[width=5cm]{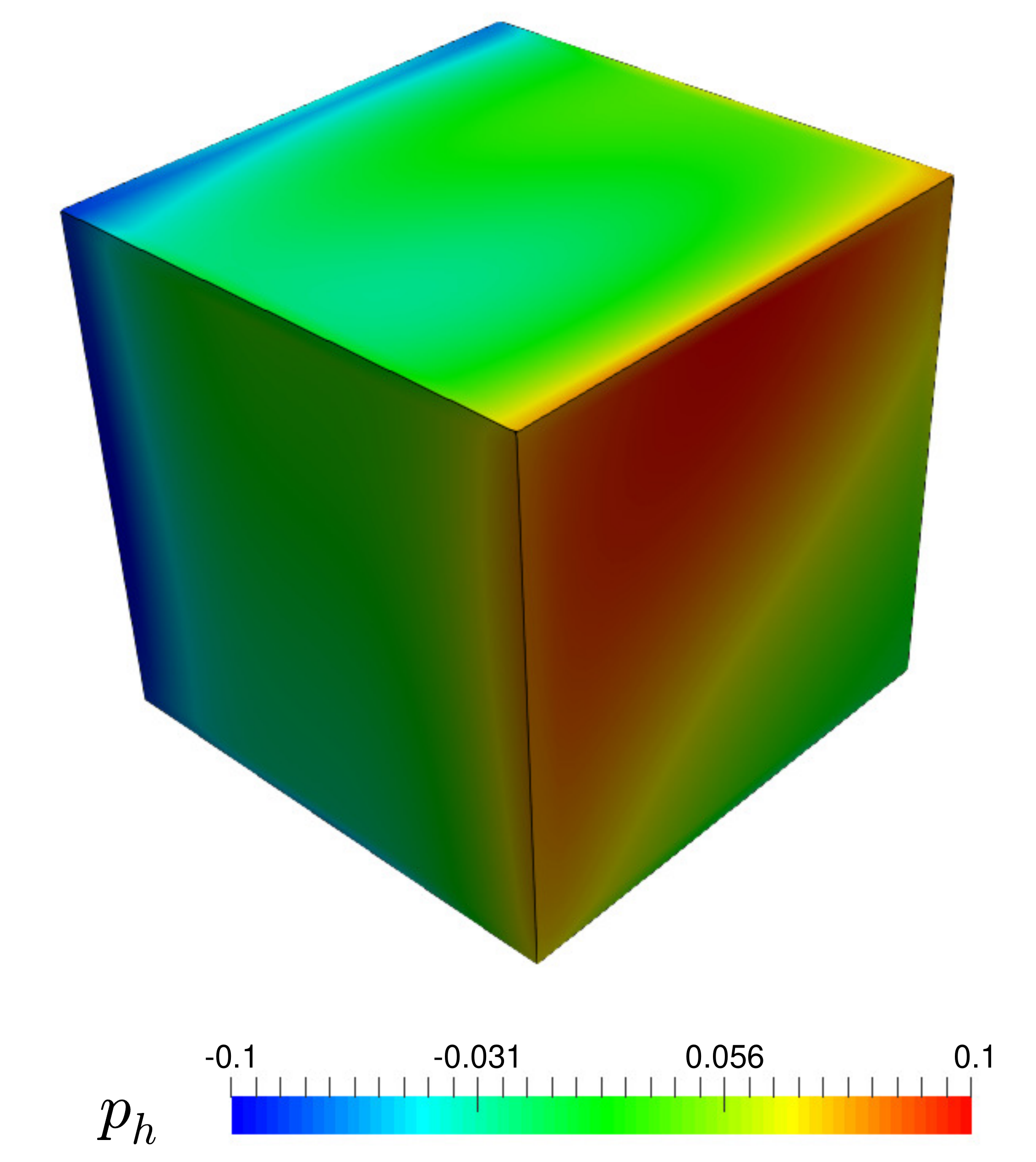}
\includegraphics[width=5cm]{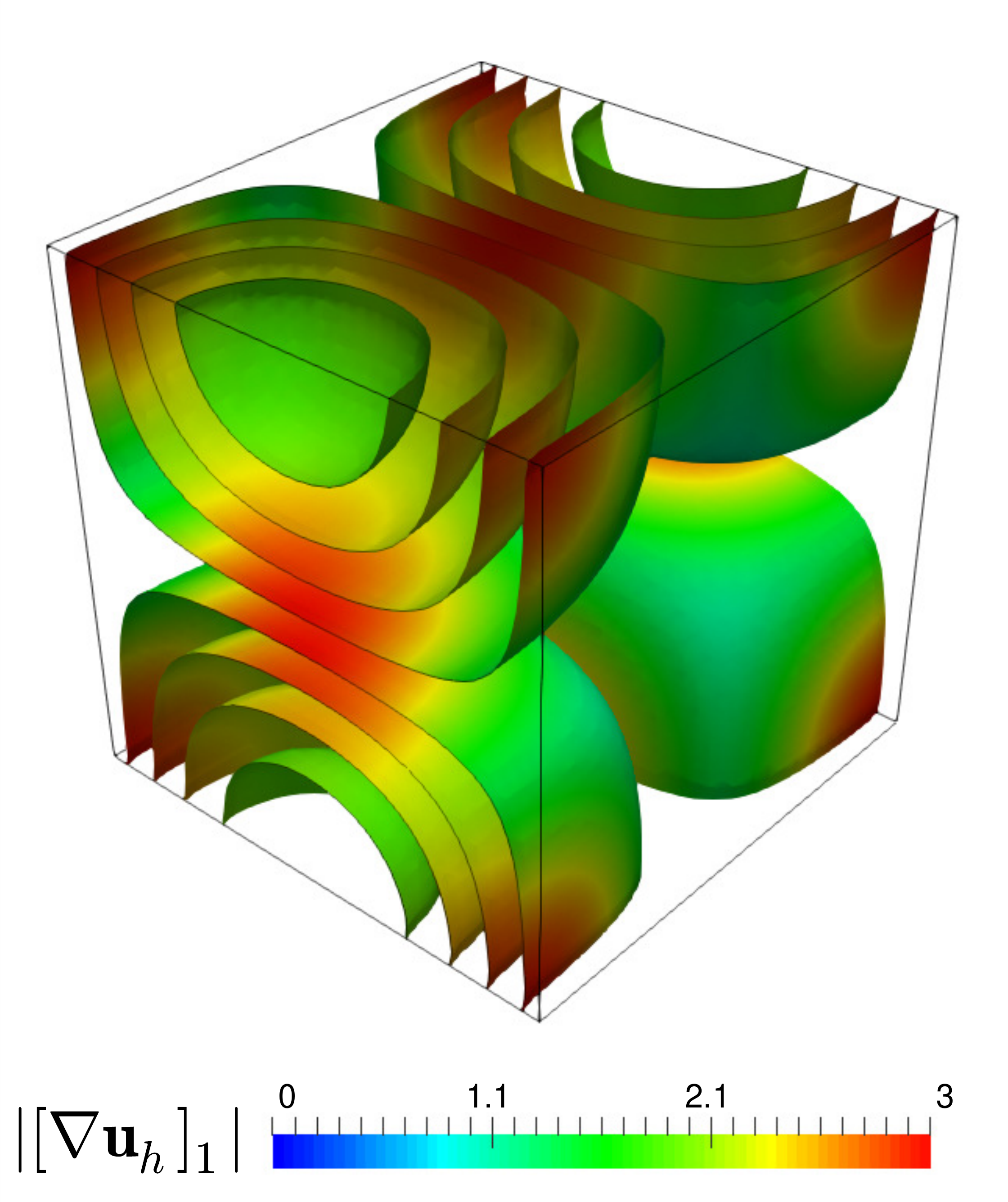}
\includegraphics[width=5cm]{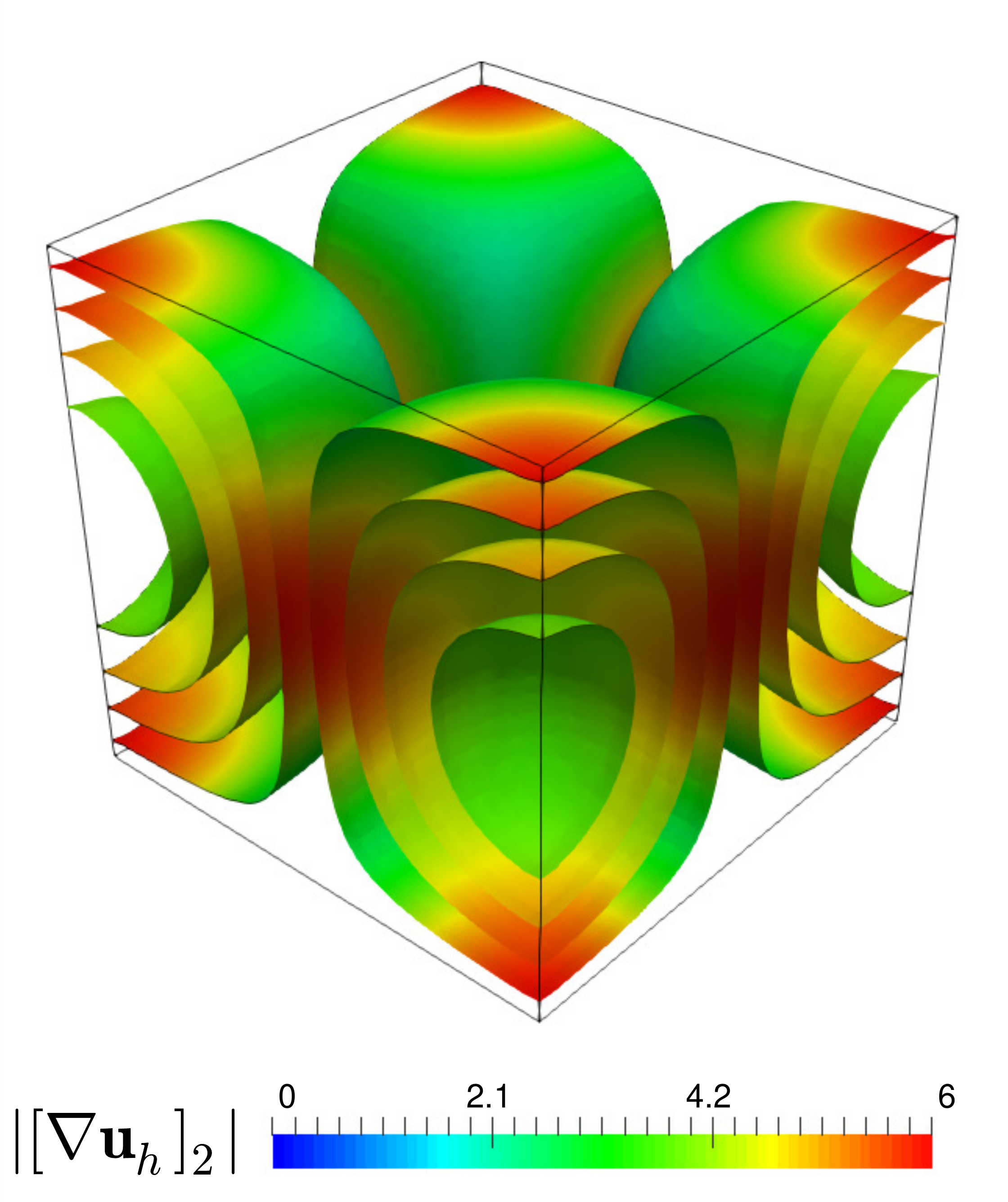}
\caption{Example 2, Computed magnitude of a component of the pseudostress tensor and
  the velocity, and a velocity component (top plots); pressure field and magnitude of the velocity gradient components (bottom plots).} \label{fig:example2}
\end{center}
\end{figure}

\begin{table}[ht!]
\begin{center}
		
\begin{tabular}{r|c||cc|cc|cc}
\hline
& & \multicolumn{2}{|c|}{$\|\be_{\bsi}\|_{\ell^2(0,T;\bbX)}$} & \multicolumn{2}{|c|}{$\|\be_{\bu}\|_{\ell^2(0,T;\bM)}$} & \multicolumn{2}{|c}{$\|\be_{\bu}\|_{\ell^{\infty}(0,T;\bL^2(\Omega))}$} \\ 
$\DOF$    & $h$ & $\error$ & $\rate$ & $\error$ & $\rate$ & $\error$ & $\rate$ \\  \hline\hline
504     & 0.7071 & 1.8E-00 &   --   & 5.1E-02 &   --   & 4.5E-01 &   --   \\ 
3744    & 0.3536 & 8.1E-01 & 1.1148 & 2.7E-02 & 0.9181 & 2.4E-01 & 0.9353 \\ 
28800   & 0.1768 & 3.7E-01 & 1.1120 & 1.4E-02 & 0.9726 & 1.2E-01 & 0.9758 \\
225792  & 0.0884 & 1.8E-01 & 1.0863 & 7.0E-03 & 0.9927 & 6.1E-02 & 0.9940 \\
1787904 & 0.0442 & 8.5E-02 & 1.0486 & 3.5E-03 & 0.9981 & 3.0E-02 & 0.9985 \\
\hline 
\end{tabular}
		
\medskip
		
\begin{tabular}{cc|cc|c}
\hline
\multicolumn{2}{c}{$\|\be_p\|_{\ell^2(0,T;\L^2(\Omega))}$} & \multicolumn{2}{|c|}{$\|\be_{\bGu}\|_{\ell^2(0,T;\bbL^2(\Omega))}$} & \\ 
$\error$ & $\rate$ & $\error$ & $\rate$ & $\iter$ \\  \hline\hline
1.4E-01 &   --   & 2.3E-01 &  --    & 4 \\ 
7.2E-02 & 0.9785 & 1.2E-01 & 0.9868 & 4 \\ 
3.3E-02 & 1.1336 & 5.9E-02 & 0.9897 & 3 \\
1.3E-02 & 1.3651 & 2.9E-02 & 0.9957 & 3 \\
4.7E-03 & 1.4490 & 1.5E-02 & 0.9985 & 3 \\
\hline 
\end{tabular}
		
\caption{{\sc Example 2}, Number of degrees of freedom, mesh sizes, errors, rates of convergences, and average number of Newton iterations for the mixed $\bbRT_0-\bP_0$ approximation of the Brinkman--Forchheimer model.}\label{tab:example2-k0}
\end{center}
\end{table}


\subsection*{Example 3: Lid-driven cavity flow with different Darcy and Forchheimer coefficients}

In our third example, inspired by \cite[Section~V]{lst2017}, we study the behavior of the model for lid-driven cavity flow in the square domain $\Omega = (0,1)^2$ with different values of the parameters $\alpha$ and $\tF$. The body force term is chosen as $\f = \0$,
the initial condition is zero, and the boundaries conditions are 
\begin{equation*}
\bu = (1,0) \qon \Gamma_{\mathrm{top}},\quad
\bu = \0 \qon \partial\Omega\setminus\Gamma_{\mathrm{top}}.
\end{equation*}

In Figure~\ref{fig:example3a} we display the approximated magnitude of
the velocity under two scenarios.  In the first one we consider fixed
$\alpha = 1$ and different values of $\tF$, i.e., $\tF\in \big\{ 1,
10, 100, 1000 \big\}$, whereas in the second case we take fixed $\tF =
1$ and different $\alpha\in \big\{ 10, 100, 1000 \big\}$.  The curves
corresponding to the values of the first velocity component along the
vertical line through the cavity center for the different values of
$\alpha$ and $\tF$ are shown in Figure~\ref{fig:example3b}. The
results illustrate the ability of the method to handle a wide range of
parameters. In addition, we observe a reasonable behavior of the
Brinkman--Forchheimer model when varying the parameters. In
particular, keeping $\alpha$ fixed and increasing $\tF$ results in
slower flow, since the nonlinear inertial term starts dominating the
viscous term in the Stokes flow. Keeping $\tF$ fixed and increasing
$\alpha$ has an even stronger effect of slowing down the flow, since
in this case the model approaches Darcy flow with smaller and smaller
permeability. We note that the case $\tF=1$, $\alpha = 1000$ is
especially challenging numerically, due to the sharp boundary layer
near the top boundary, but the method handles it without any
difficulty.

\begin{figure}[ht]
\begin{flushright}
\includegraphics[width=4.15cm]{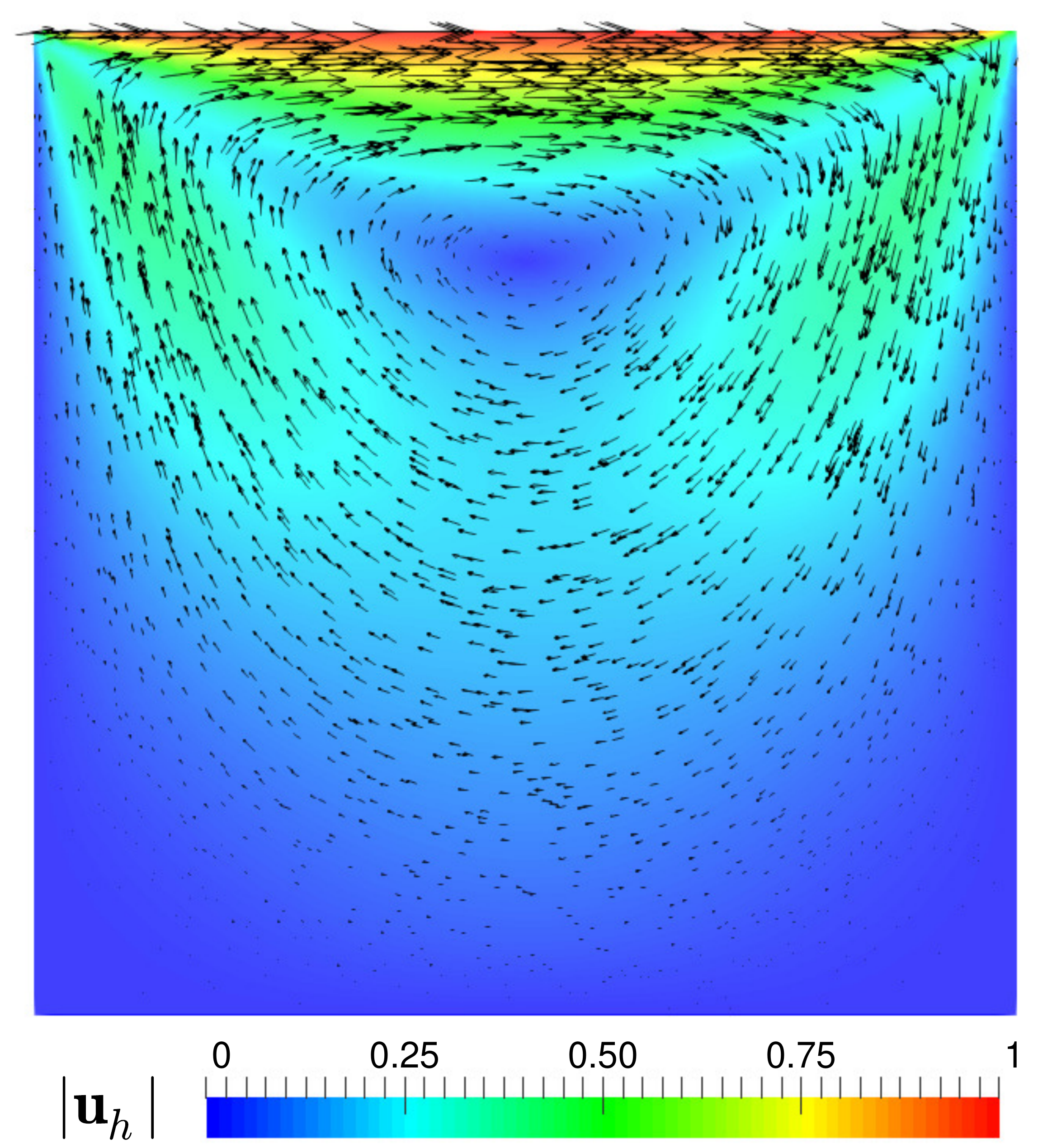}
\includegraphics[width=4.15cm]{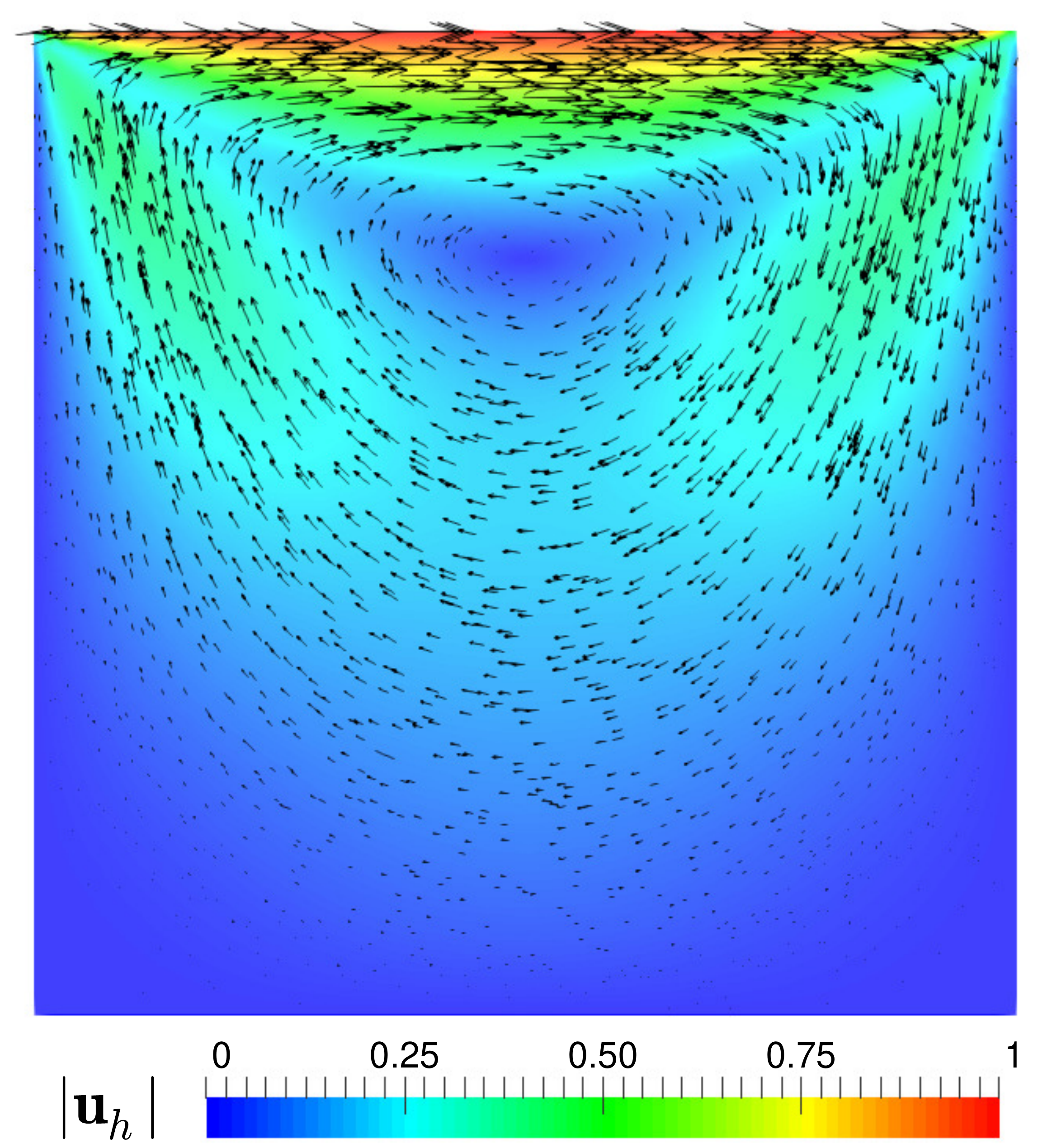}
\includegraphics[width=4.15cm]{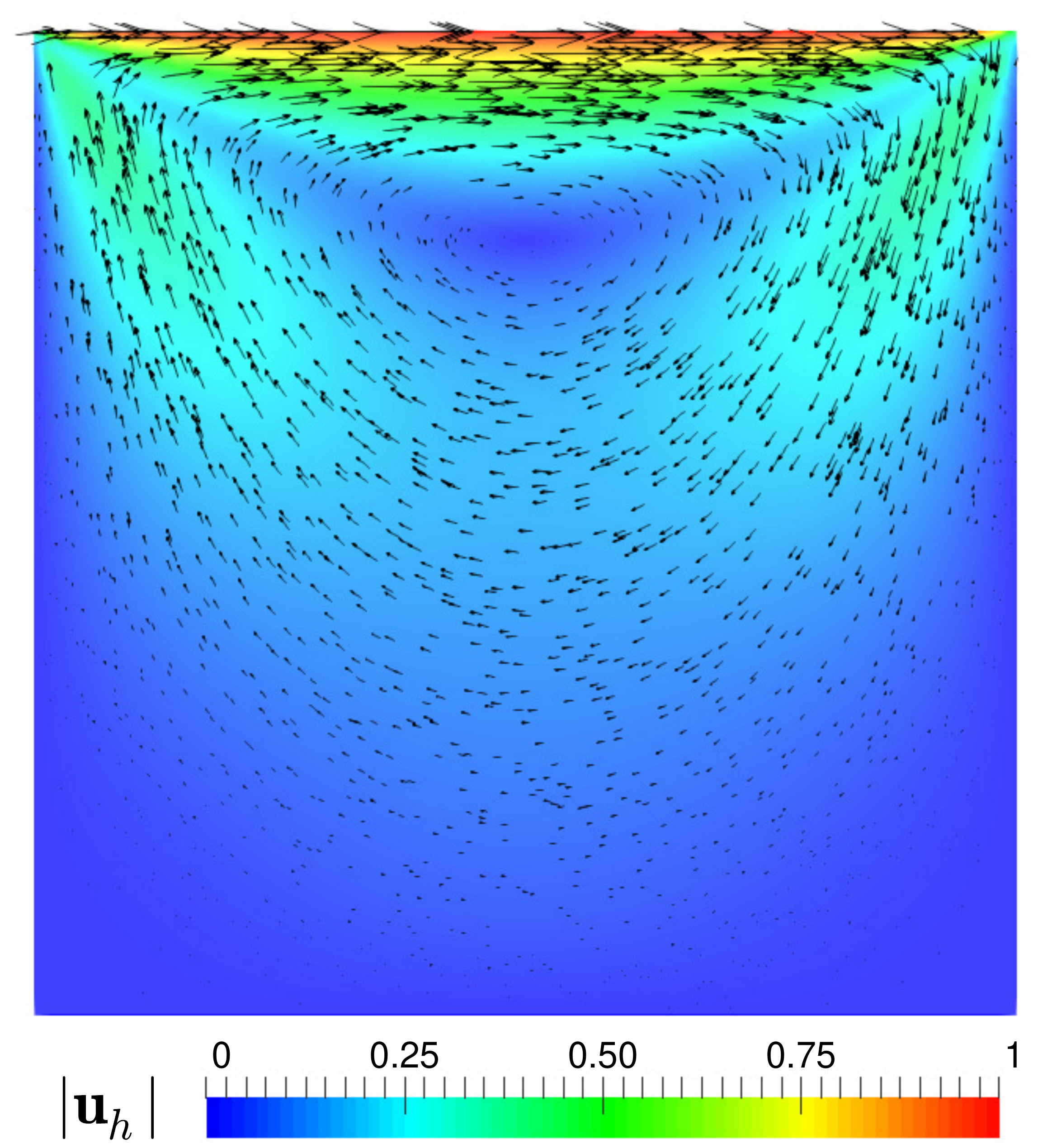}
\includegraphics[width=4.15cm]{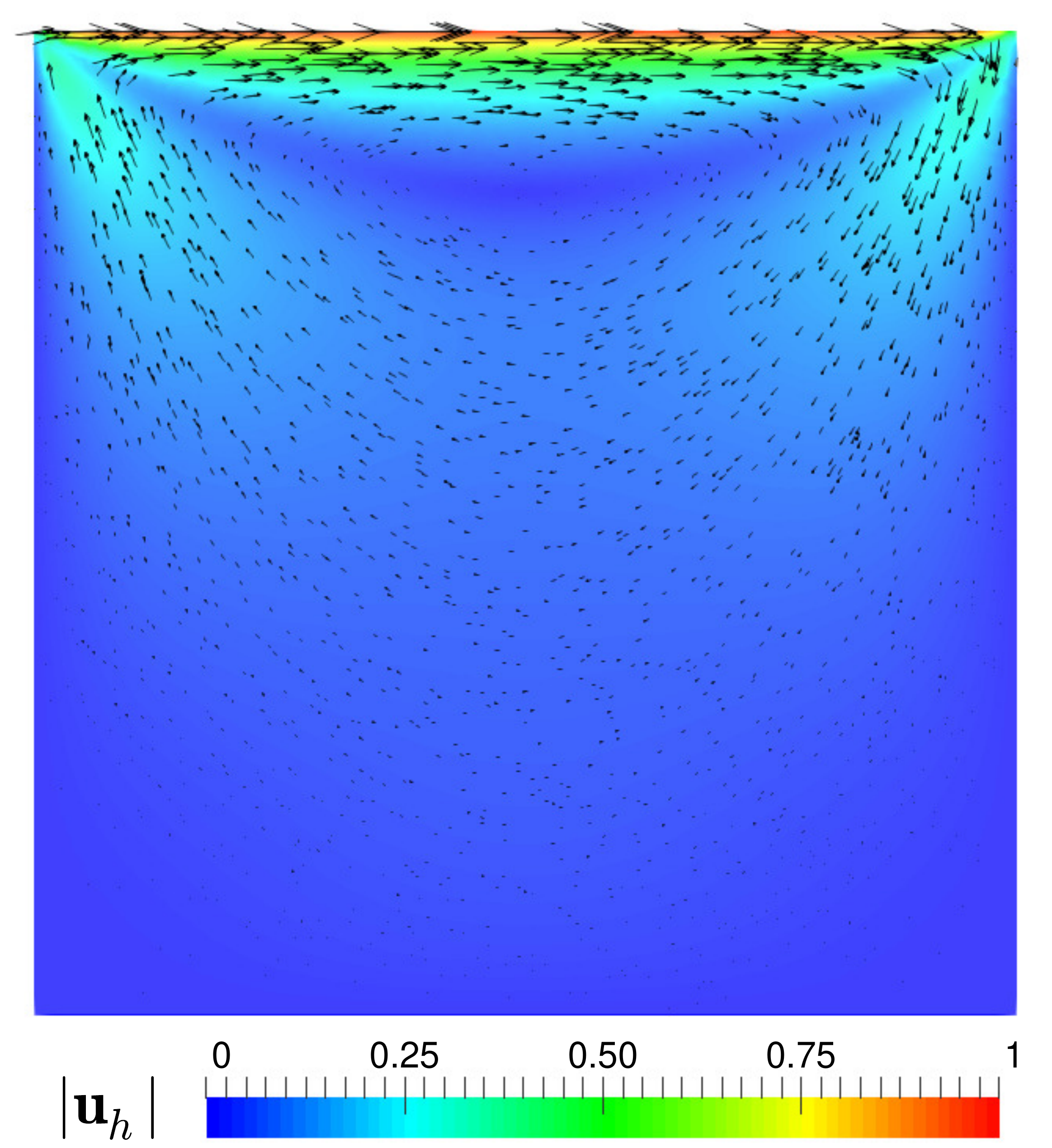}

\bigskip

\includegraphics[width=4.15cm]{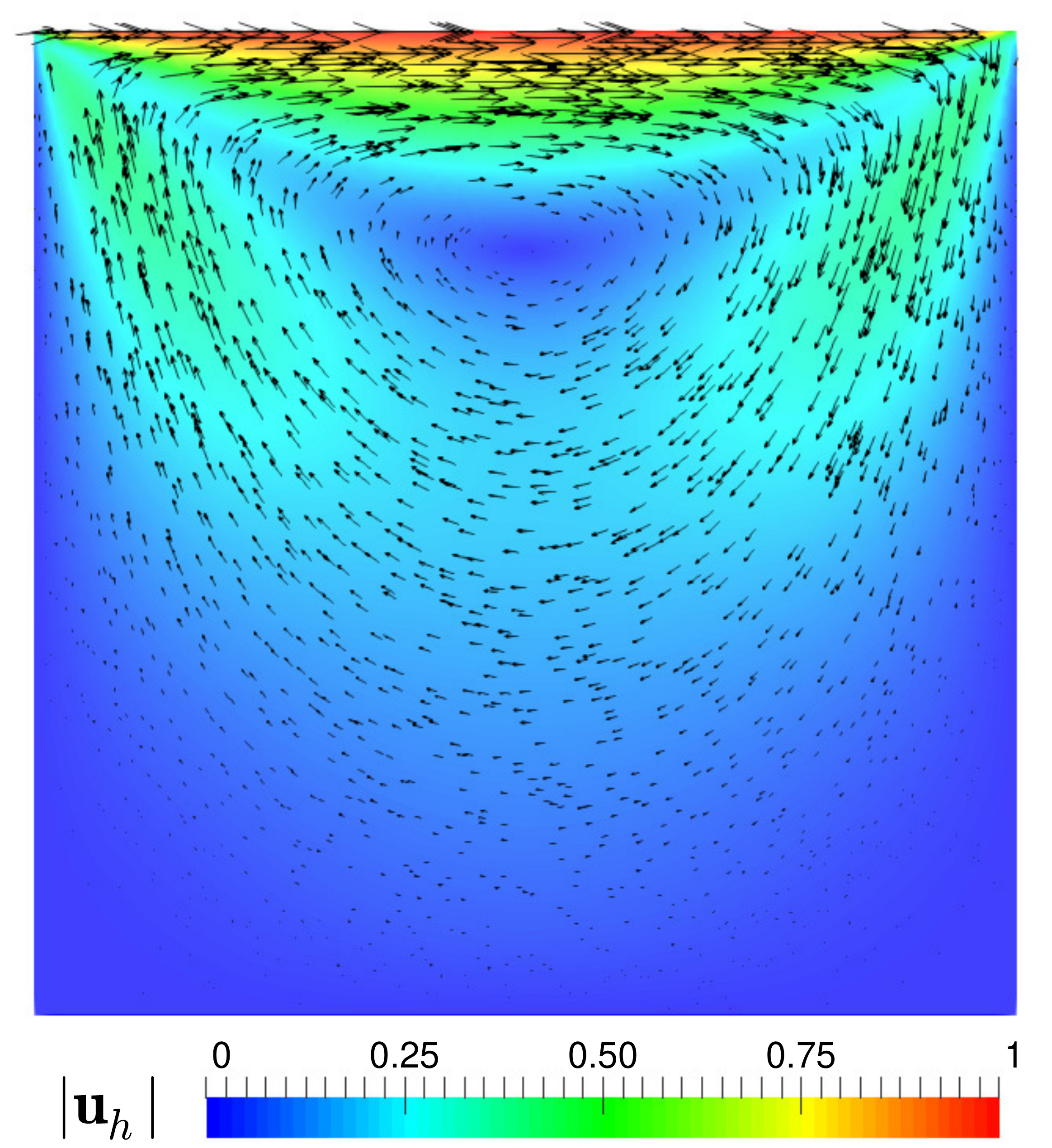}
\includegraphics[width=4.15cm]{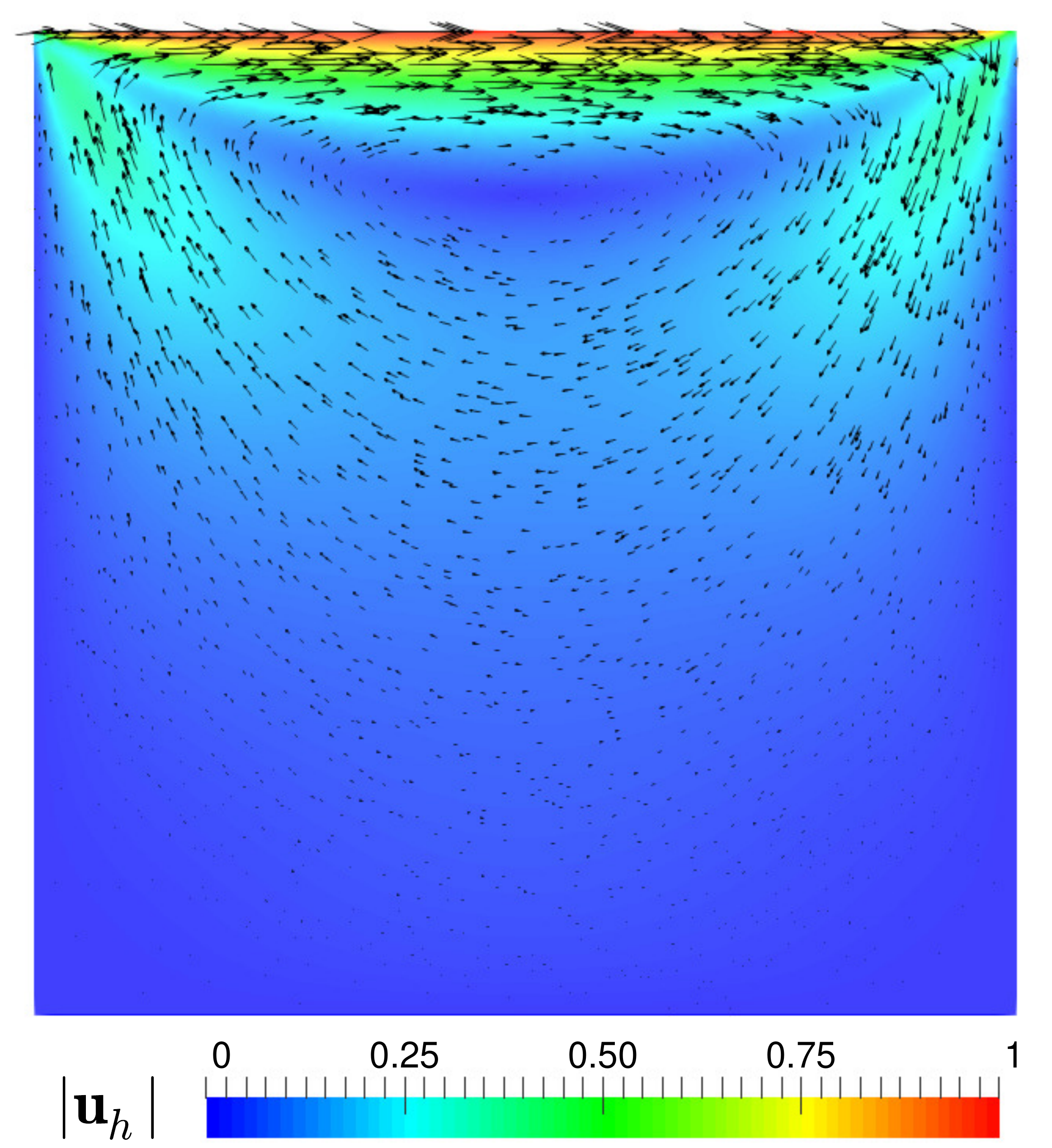}
\includegraphics[width=4.15cm]{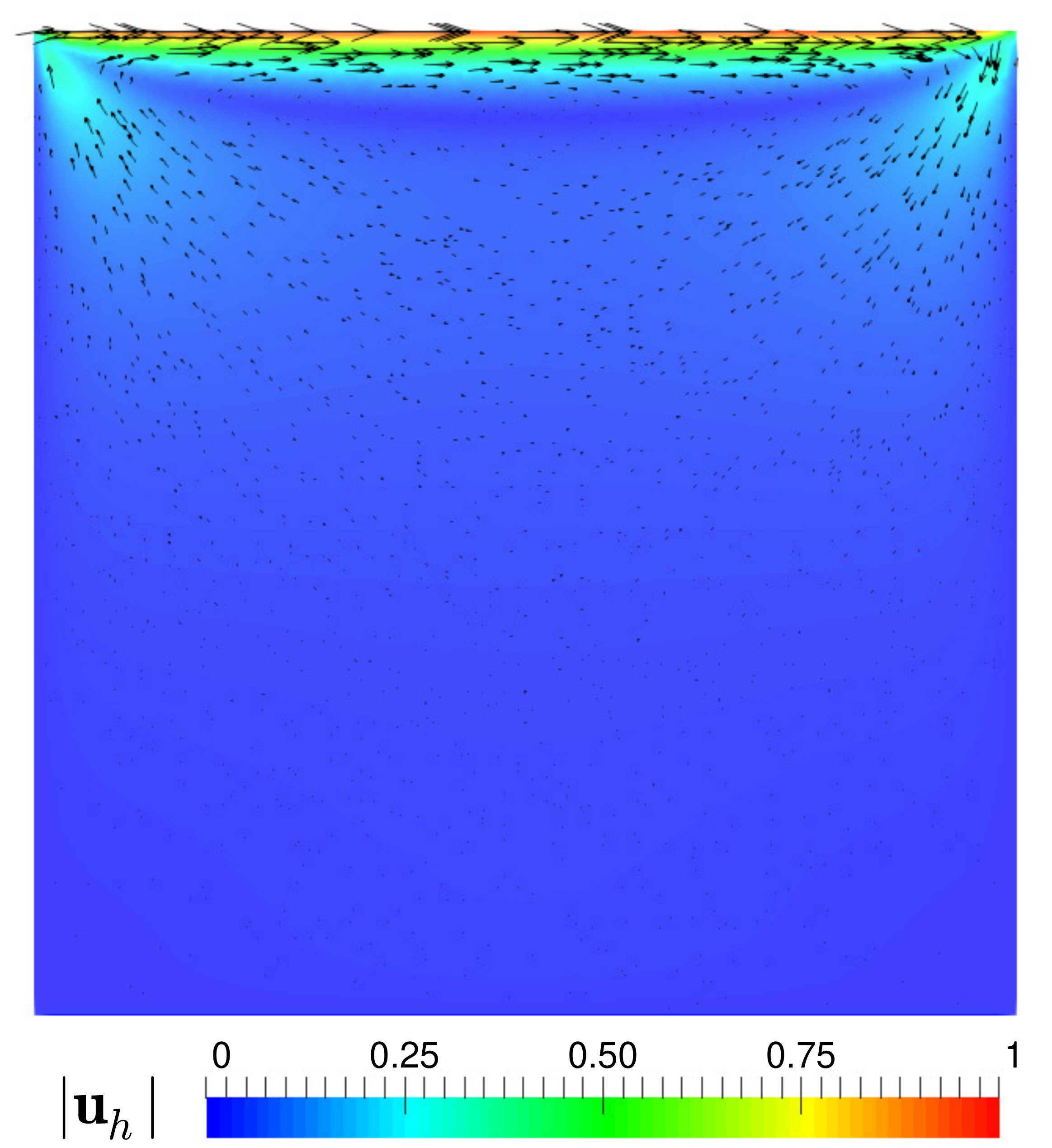}
\caption{Example 3, Approximated magnitude of the velocity at time $T=1$, considering $\alpha = 1$ and $\tF\in \big\{ 1, 10, 100, 1000 \big\}$ (top plots), and $\tF = 1$ and $\alpha\in \big\{ 10, 100, 1000 \big\}$ (bottom plots).}\label{fig:example3a}
\end{flushright}
\end{figure}

\begin{figure}[ht]
\begin{center}
\includegraphics[width=8cm]{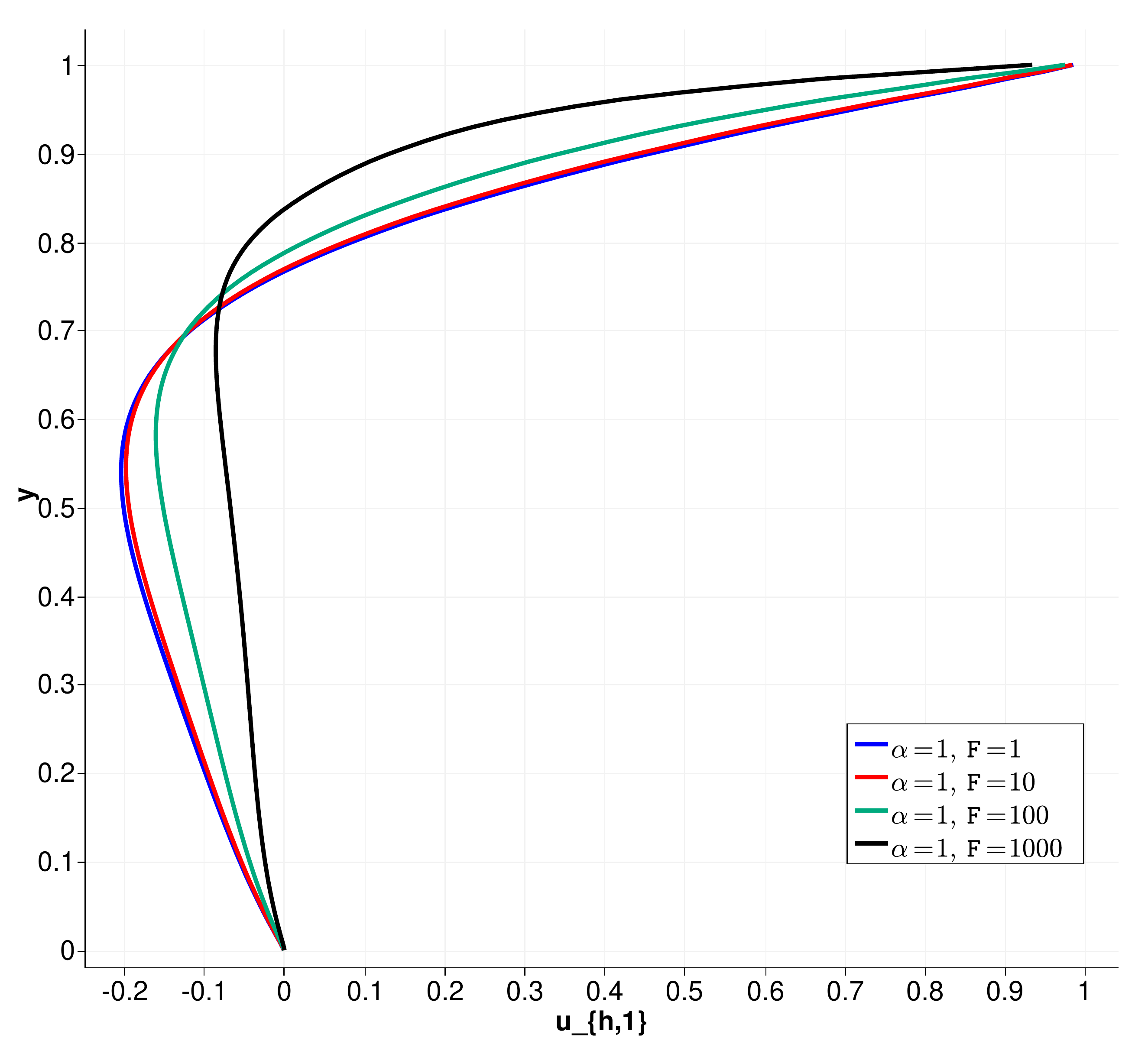}
\hspace{0.5cm}
\includegraphics[width=8cm]{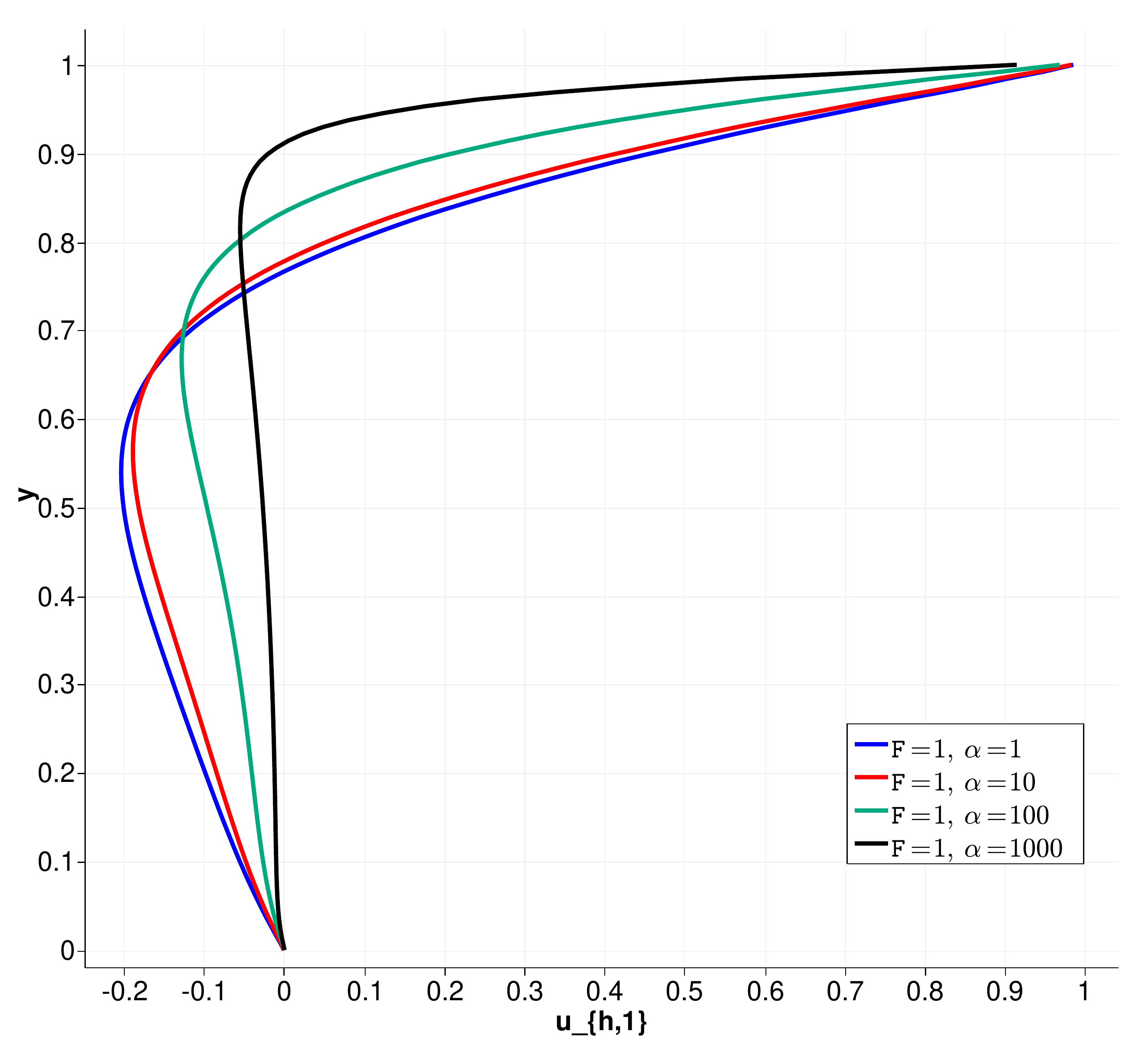}		
\caption{Example 3, Curves associated to the first velocity component along the vertical line through the cavity center considering different values of $\alpha$ and $\tF$.}\label{fig:example3b}
\end{center}
\end{figure}


\subsection*{Example 4: Flow through porous media with channel network}

In our last example, inspired by \cite[Section~5.2.4]{akny2019}, we focus on
flow through porous media with channel network.
We consider the square domain $\Omega = (-1,1)^2$ with an internal channel network denoted as $\Omega_{\rc}$, which is described in the first plot of Figure~\ref{fig:example4}.
We consider the Brinkman--Forchheimer model \eqref{eq:Brinkman-Forchheimer-2} in the whole domain $\Omega$ but with different values of the parameters $\alpha$ and $\tF$ for the interior
and the exterior of the channel, that is,
\begin{equation*}
\alpha \,=\, \left\{\begin{array}{rl}
1 &\mbox{in }\,\, \Omega_{\rc} \\
1000 &\mbox{in }\,\, \ov{\Omega}\setminus \Omega_{\rc}
\end{array}\right. \qan
\tF \,=\, \left\{\begin{array}{rl}
10 &\mbox{in }\,\, \Omega_{\rc} \\
1 &\mbox{in }\,\, \ov{\Omega}\setminus \Omega_{\rc}
\end{array}\right..
\end{equation*}
The parameter choice corresponds to a high permeability ($\alpha = 1$) in the channel and
increased inertial effect ($\tF = 10$), compared to low permeability ($\alpha = 1000$)
in the porous media and reduced inertial effect ($\tF = 1$). 
In addition, the body force term is $\f = \0$, the initial condition is zero, and
the boundaries conditions are 
\begin{equation*}
\bu\cdot\bn = 0.2,\quad \bu\cdot\bt = 0 \qon \Gamma_{\mathrm{left}},\quad
\bsi\,\bn = \0 \qon \partial\Omega\setminus\Gamma_{\mathrm{left}},
\end{equation*}
which corresponds to inflow on the left boundary and zero stress outflow on the rest of the
boundary.

In Figure~\ref{fig:example4} we display the computed magnitude of the
velocity, pseudostress tensor component, and velocity gradient
component at times $T=0.01$ and $T=1$. As expected, we observe faster
flow through the channel network, with a significant velocity gradient
across the interface between the channel and the porous media. The
pseudostress is more diffused, since it includes the pressure field.
This example illustrates the ability of the Brinkman--Forchheimer
model to handle heterogeneous media using spatially varying
parameters. The example is particularly challenging, due to the strong
jump discontinuity of the parameters across the two regions, and our
numerical method handled it well.

\begin{figure}[ht]
\begin{flushright}
\includegraphics[width=4.15cm]{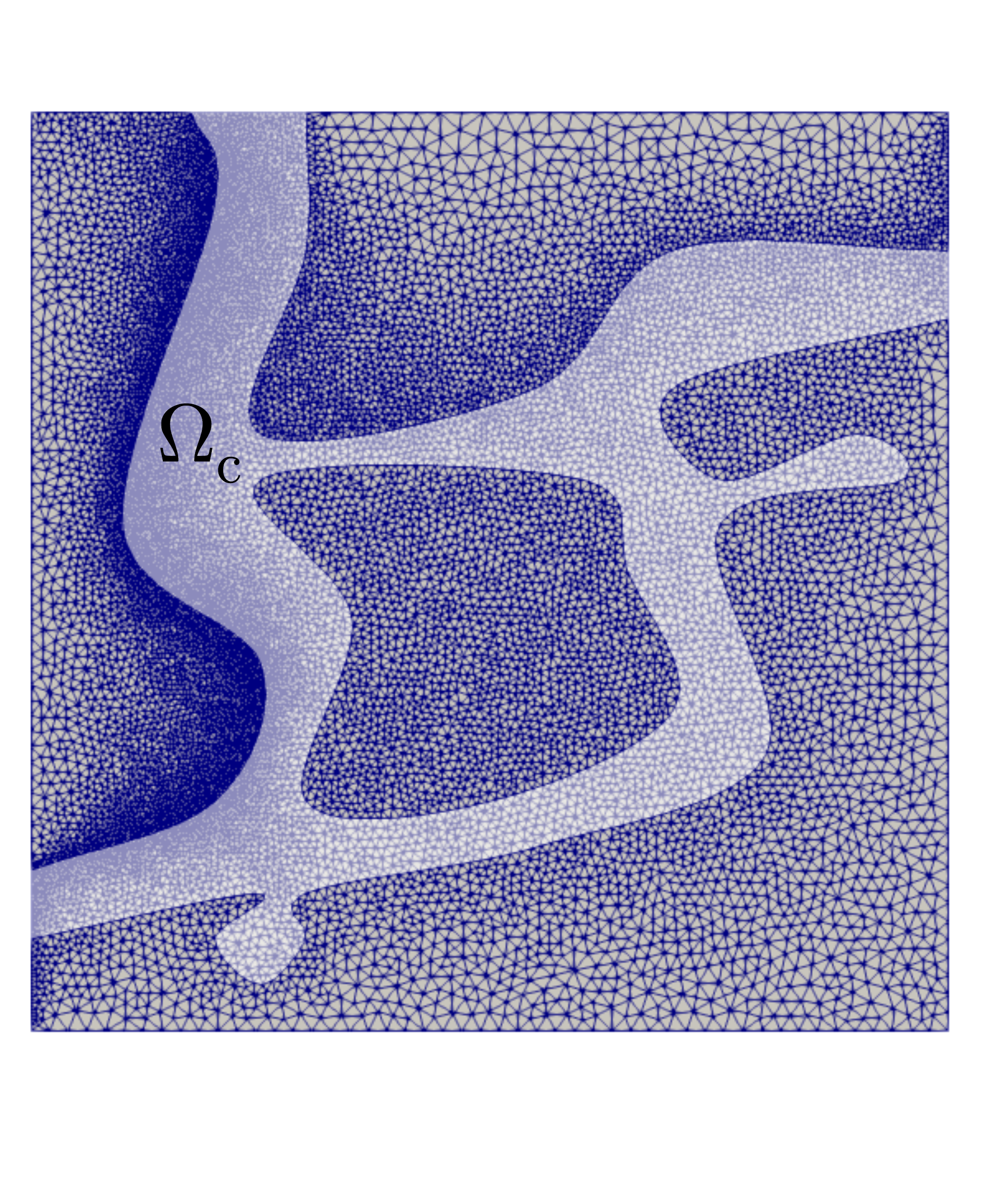}	
\includegraphics[width=4.15cm]{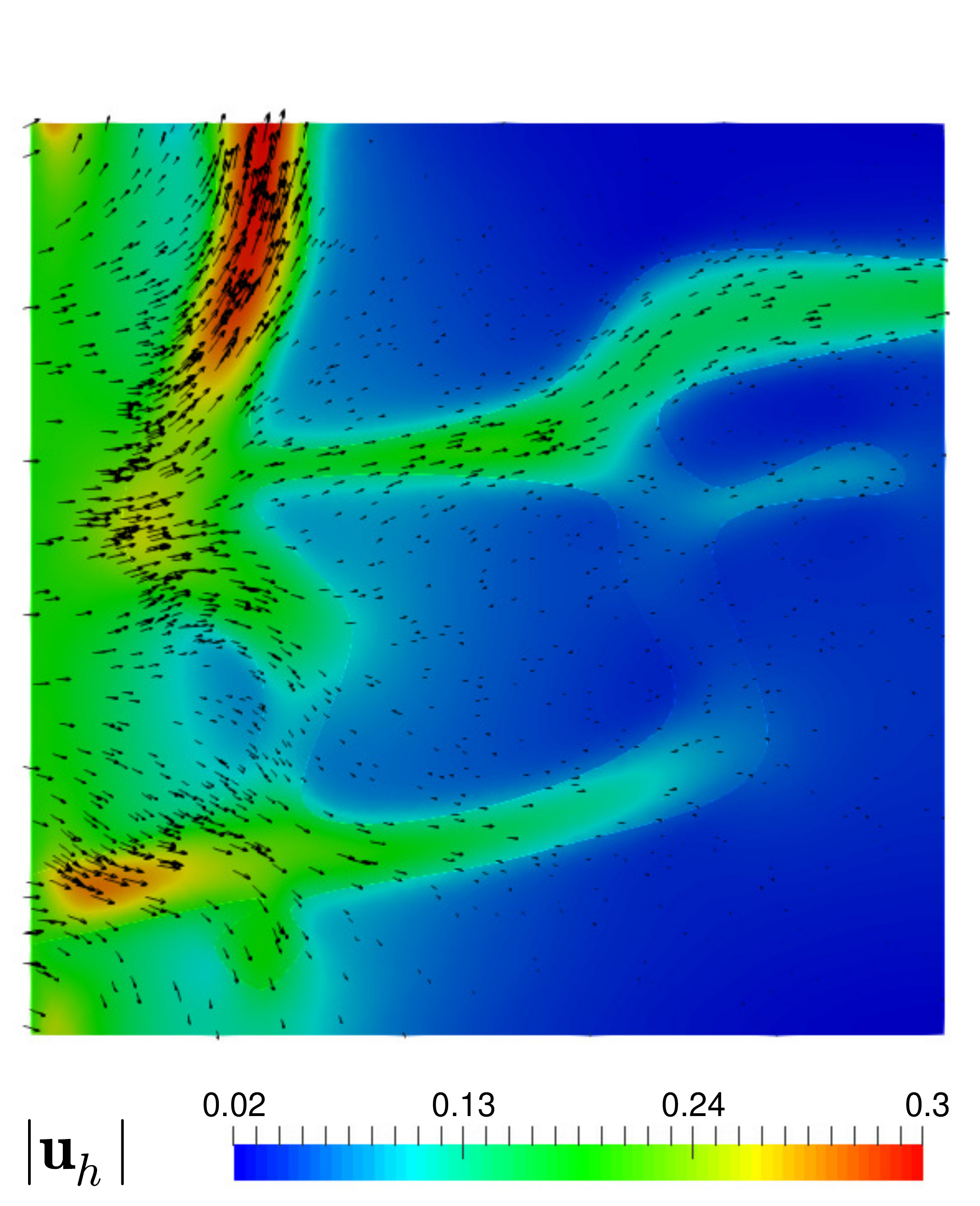}
\includegraphics[width=4.15cm]{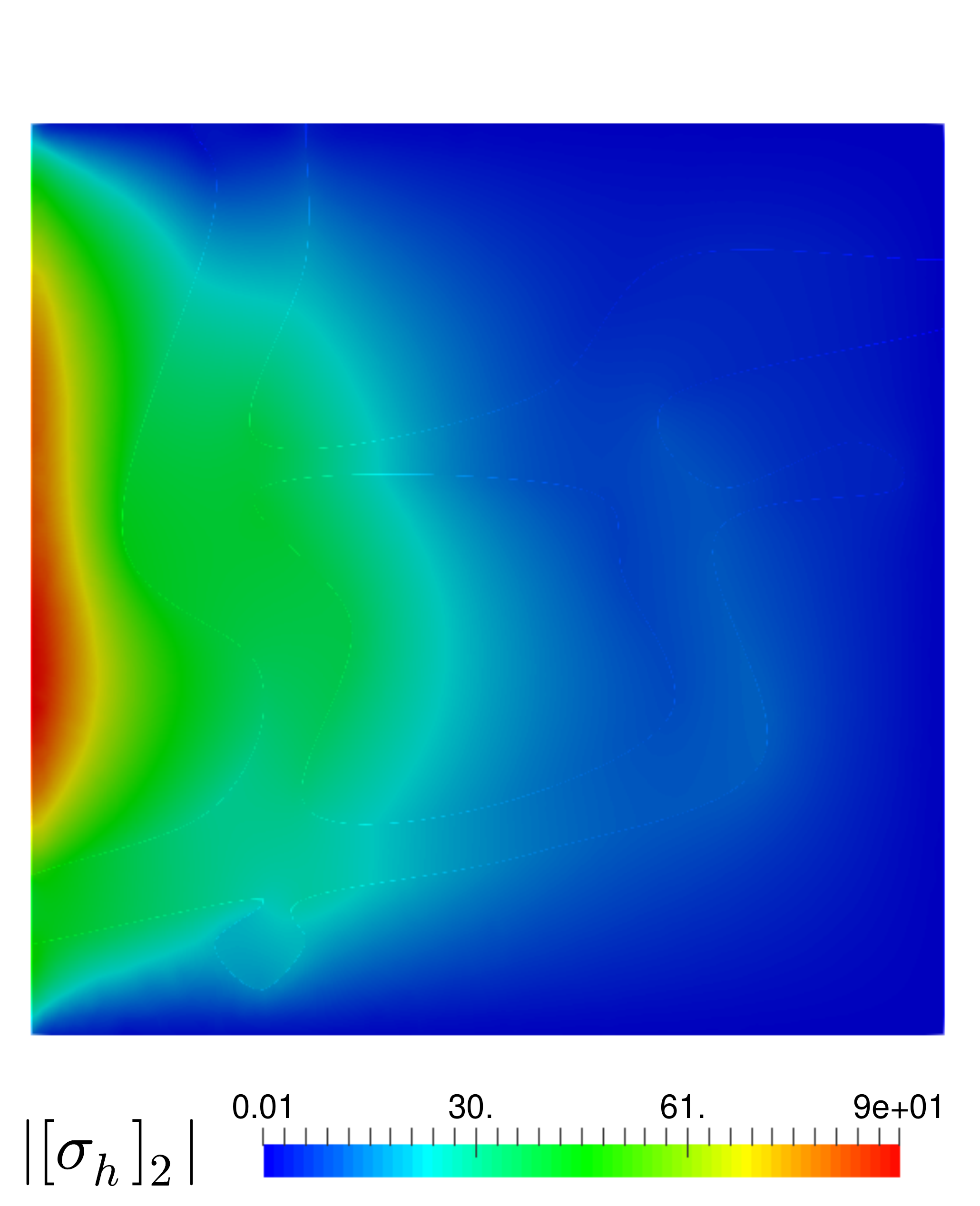}
\includegraphics[width=4.15cm]{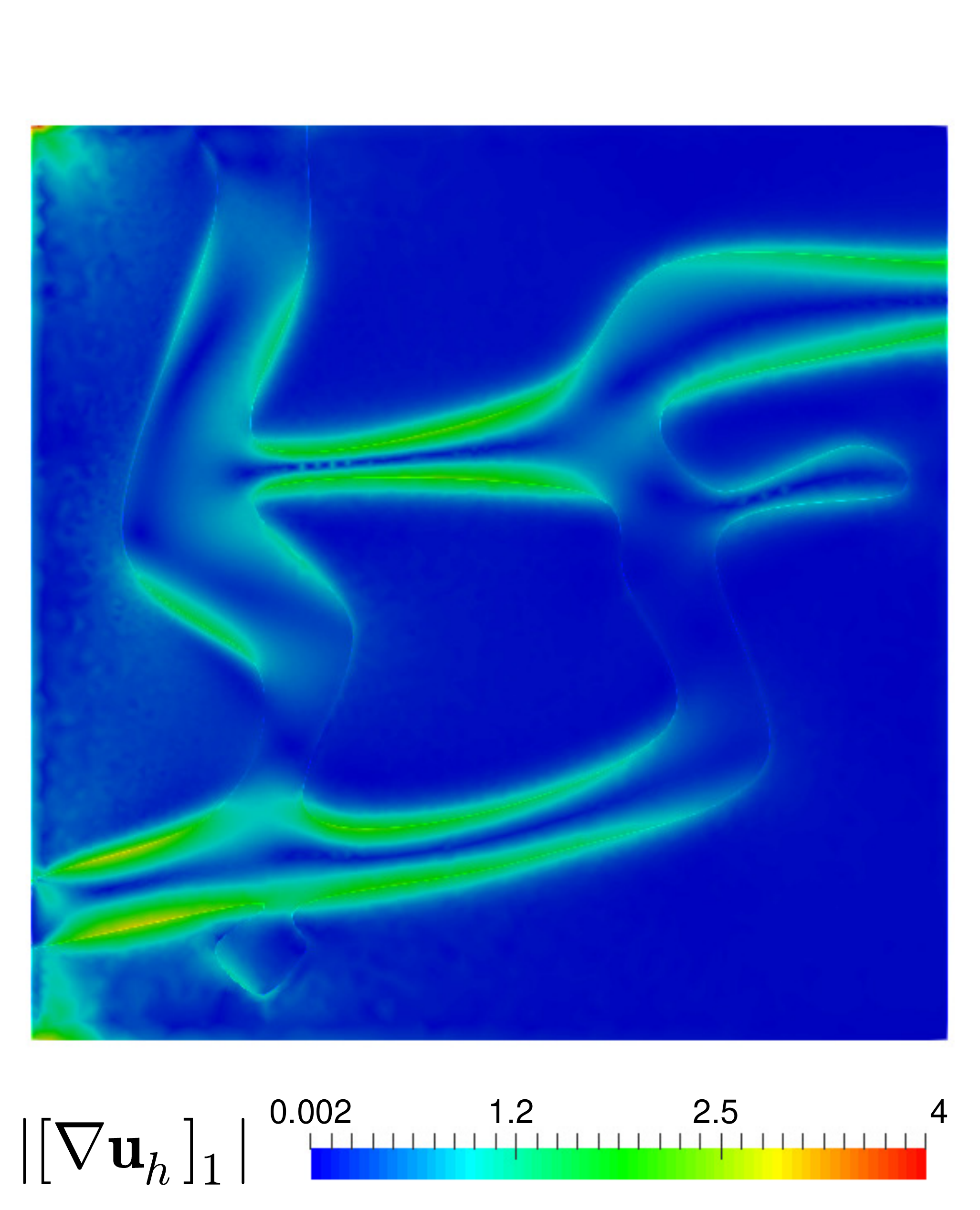}	
		
\includegraphics[width=4.15cm]{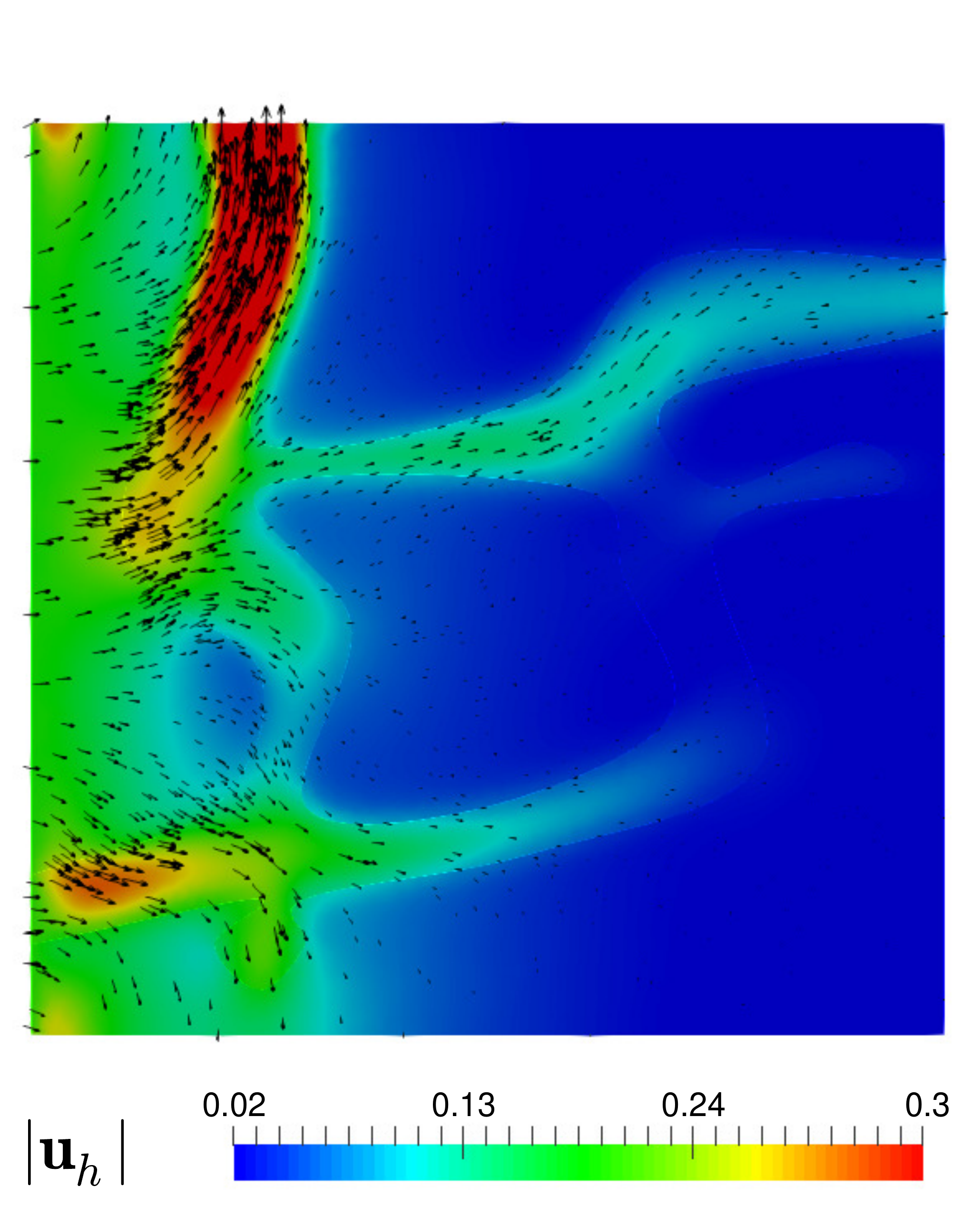}
\includegraphics[width=4.15cm]{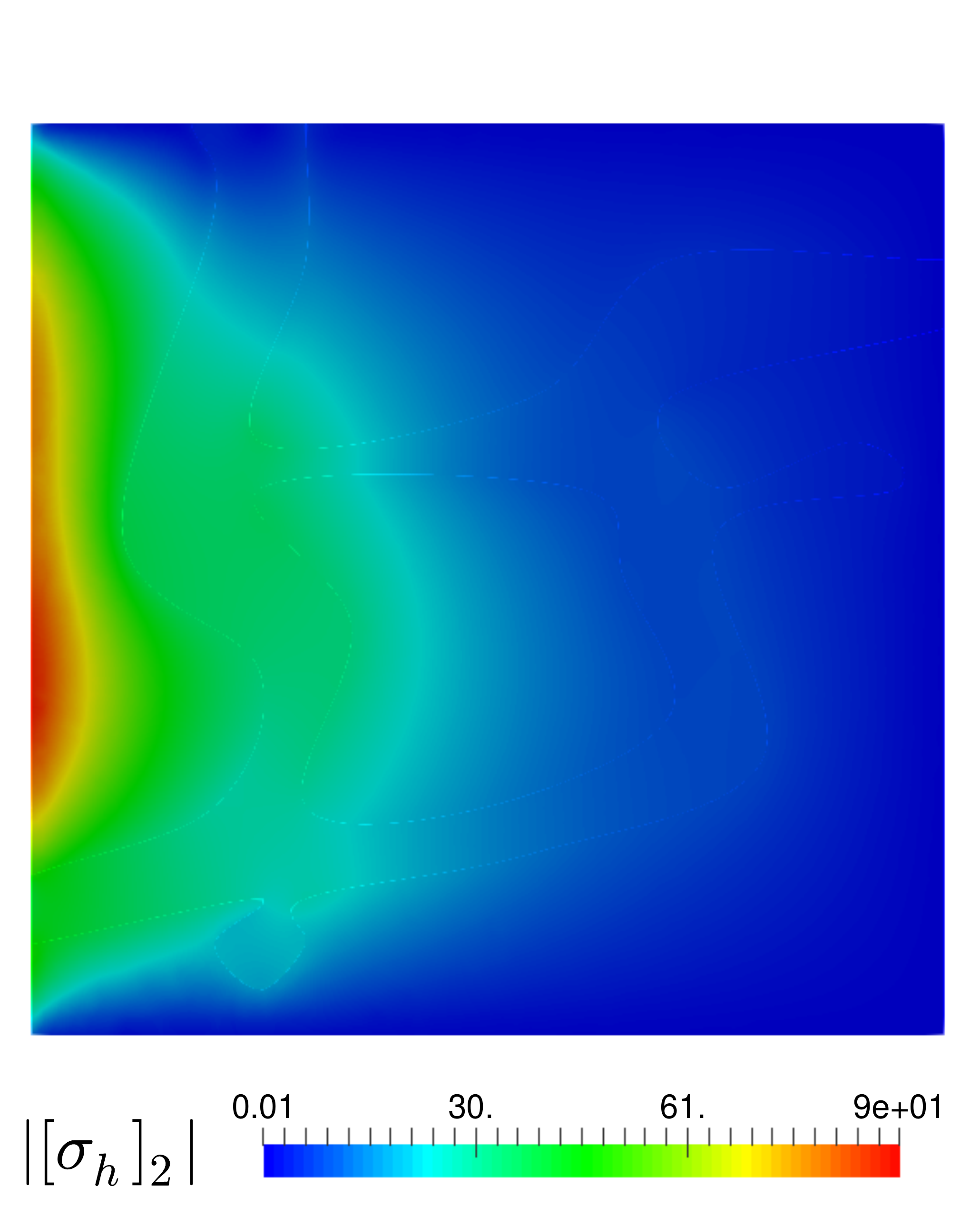}
\includegraphics[width=4.15cm]{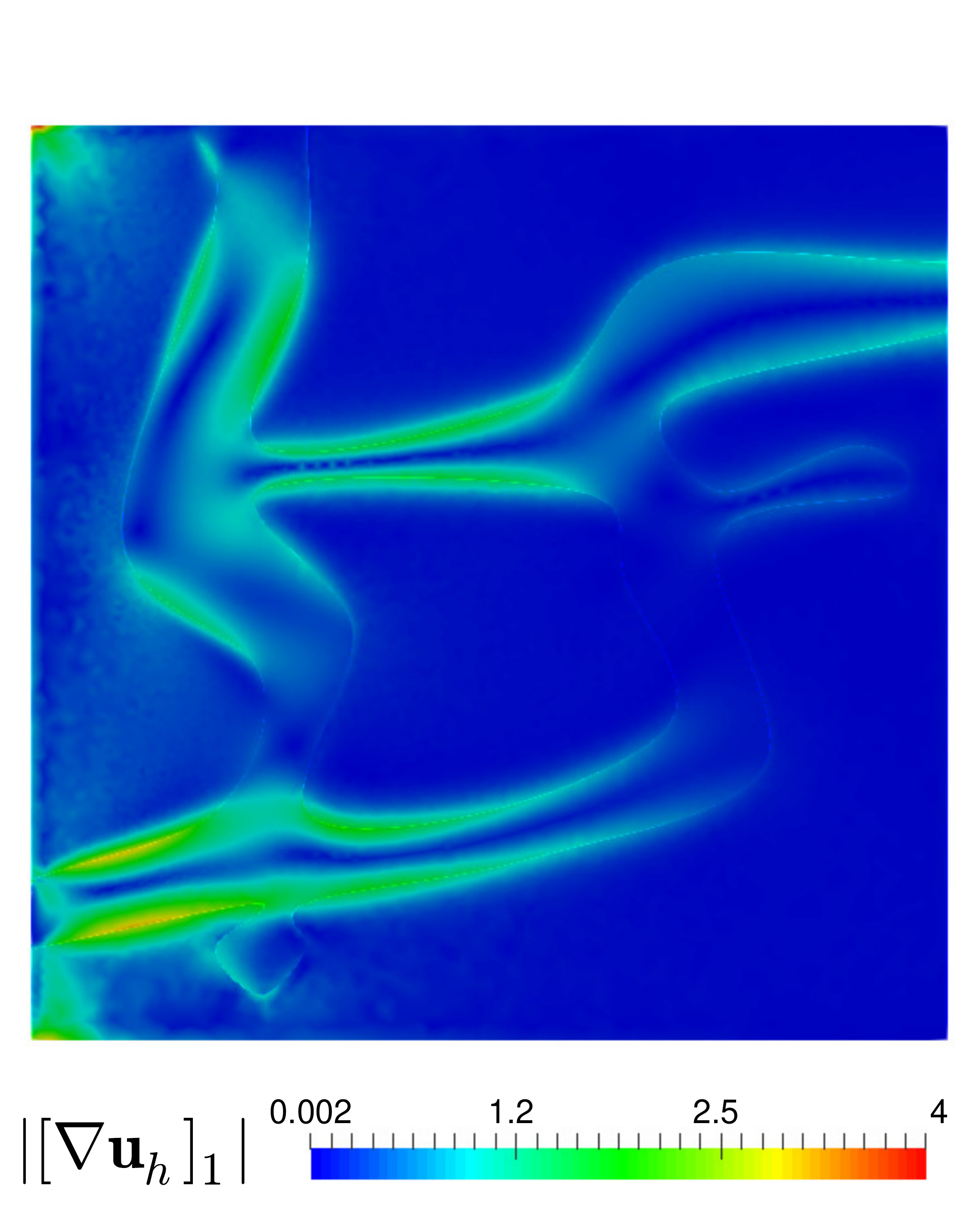}	
\caption{Example 4, Domain configuration, computed magnitude of the velocity, pseudostress tensor component, and velocity gradient component at time $T=0.01$ (top plots), and at time $T=1$ (bottom plots).}\label{fig:example4}
\end{flushright}
\end{figure}

\bibliographystyle{abbrv}
\bibliography{caucao-yotov-1}
\end{document}